\newcommand{\MnC}{M_n(\mathbb{C})}
\newcommand{\GL}{\mathrm{GL}}
\newcommand{\CC}{\mathbb{C}}
\newcommand{\RR}{\mathbb{R}}
\newcommand{\ZZ}{\mathbb{Z}}
\newcommand{\gotG}{\mathfrak{g}}
\newcommand{\cC}{\mathcal{C}}
\newcommand{\cH}{\mathcal{H}}
\newcommand{\cS}{\mathcal{S}}
\newcommand{\cW}{\mathcal{W}}
\newcommand{\Hom}{\mathrm{Hom}}
\newtheorem{thm}{Theorem}[section]
\newtheorem{thm*}{Theorem}
\newtheorem{lem}[thm]{Lemma}
\newtheorem{lem*}[thm*]{Lemma}
\newtheorem{cor}[thm]{Corollary}
\newtheorem{prop*}[thm*]{Proposition}
\newtheorem{cor*}[thm*]{Corollary}
\newtheorem{defn*}[thm*]{Definition}
\newtheorem{thm-defn}[thm]{Theorem-Definition}
\theoremstyle{remark}
\newtheorem{rem}[thm]{Remark}
\newcommand{\beq}{\begin{equation}}
\newcommand{\enq}{\end{equation}}
\newcommand{\beqn}{\begin{equation*}}
\newcommand{\enqn}{\end{equation*}}
\begin{document}
\author{Alexander Kemarsky}
\address{Mathematics Department, Technion - Israel Institute of Technology, Haifa, 32000 Israel}
\email{alexkem@tx.technion.ac.il}

\title{Gamma Factors of Distinguished Representations of $\GL_n(\CC)$}
\begin{abstract}
Let $(\pi,V)$ be a $\GL_n(\RR)$-distinguished, irreducible, admissible representation of $\GL_n(\CC)$,
let $\pi'$ be an irreducible, admissible, $\GL_m(\RR)$-distinguished
representation of $\GL_m(\CC)$,
and let $\psi$ be a non-trival character of $\CC$ which is
trivial on $\RR$.
We prove that the Rankin-Selberg gamma factor
at $s=\frac{1}{2}$ is $\gamma(\frac{1}{2},\pi \times \pi'; \psi) = 1$. The
result follows as a simple consequence from the characterisation of $\GL_n(\RR)$-distinguished representations in terms of their Langlands data.

\end{abstract} 

\date{\today}
\maketitle
\makeatletter
\@setabstract

\numberwithin{equation}{section}

\tableofcontents

\begin{section}{Introduction}
Let $G_n(\CC) = \GL_n(\CC)$, $G_n(\RR) = \GL_n(\RR)$. Let $B_n = B_n(\CC)$ be the Borel subgroup of upper
triangular matrices in $G_n(\CC)$. Denote complex conjugation by $x \to \bar{x}$. We identify $G_n(\CC)/G_n(\RR)$ with the space of matrices
$$X_n =  \bigg\{ x \in G_n(\CC)  \big| x \cdot \bar{x} = I_n \bigg\},$$
via the isomorphism $gG_n(\RR) \mapsto g \cdot \bar{g}^{-1} $, see \cite[Chapter 3, Section 1, Lemma 1]{Serre}
for the proof of the surjectivity of this map.
Given a representation $\pi$ of $G_n(\CC)$,
the representation $\bar{\pi}$ is defined by the formula $\bar{\pi}(g) := \pi(\bar{g})$.\\
The group $G_n(\CC)$ acts on $X_n$ by the twisted conjugation, where the action is induced
by the natural action $l(g)g'G_n(\RR) := gg'G_n(\RR)$. Namely, we have
$$g'G_n(\RR)  \leftrightarrow
g' \cdot \bar{g'}^{-1} := x \text{ and }
l(g)(g'G_n(\RR)) := gg'G_n(\RR) \leftrightarrow gg'\bar{g'}^{-1}\bar{g}^{-1}.$$ Hence, the action of $G_n(\CC)$ on $X$ is
given by $l(g) x := gx\bar{g}^{-1}$.\newline
For a topological vector space $V$, we denote by $V^*$ the topological dual of $V$, i.e.,
the space of all continuous maps from $V$ to $\CC$.
In this paper we work with the category of the admissible smooth Fr\'echet representations of moderate growth,
 see \cite[Section 11.5]{WallachB2}, see also \cite[Section 2.1]{AGS}. 
 \newline
A representation $(\pi,V)$ of $G_n(\CC)$ is called $G_n(\RR)$-distinguished if there exists
a non-zero continuous linear map
$L:V \rightarrow \CC $, such that $$L(\pi(h)v) = L(v) \hspace{3mm} \forall v \in V, h \in G_n(\RR).$$
We denote the space of all such linear maps by $\big(V^* \big)^{G_n(\RR)}.$ \newline
Let $\psi:\CC \to \CC^{\times}$ be a non-trivial unitary character which is trivial on $\RR$, for example
$$\psi(x) = e^{\pi (x-\bar{x})}.$$
We let $U_n(\CC)$ be the group of upper triangular matrices with unit diagonal
and we denote by $\theta_{\psi,n}$ the character $\theta_{\psi,n}:U_n(\CC) \to \CC^{\times}$ defined
by $$\theta_{\psi,n}(u) = \psi\left(\sum_{i=1}^{n-1} u_{i,i+1} \right).$$
A $\psi$-form on $V$ is a nonzero continuous linear form $\lambda:V\to \CC$ such that
$$\lambda(\pi(u)v) = \theta_{\psi,n}(u)v, $$
for each $v \in V$ and each $u \in U_n(\CC)$.
We say that $\pi$ is a generic representation if there exists a $\psi$-form on $V$.
We prove the following theorem
\begin{thm}\label{thm: gamma factor}
Let $\pi$ be an irreducible, $G_t(\RR)$-distinguished
representation of $G_t(\CC)$, and $\pi'$ be an irreducible, $G_r(\RR)$-distinguished
representation of $G_r(\CC)$. Then the value of the Rankin-Selberg gamma factor
at $s=\frac{1}{2}$ is
$$ \gamma\left(\frac{1}{2} ,\pi \times \pi'; \psi \right) = 1.$$
\end{thm}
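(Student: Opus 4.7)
The plan is to combine the multiplicativity of the Rankin--Selberg gamma factor under parabolic induction with the characterization of $G_n(\RR)$-distinguished representations in terms of their Langlands data. By the Langlands classification, $\pi$ is (the Langlands quotient of) some $\mathrm{Ind}_{B_t}^{G_t(\CC)}(\chi_1 \otimes \cdots \otimes \chi_t)$ with $\chi_i$ quasi-characters of $\CC^\times$; similarly $\pi'$ has parameters $\chi'_1, \ldots, \chi'_r$. The characterization---the key input promised by the abstract---asserts that the multisets $\{\chi_i\}$ and $\{\chi'_j\}$ are each stable under the involution $\chi \mapsto \bar\chi^{-1}$, where $\bar\chi(z) = \chi(\bar z)$, with any character fixed by this involution being trivial on $\RR^\times$. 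I would simply invoke this as a black box.

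By multiplicativity of Rankin--Selberg gamma factors,
$$\gamma(1/2, \pi \times \pi'; \psi) = \prod_{i=1}^{t} \prod_{j=1}^{r} \gamma(1/2, \chi_i \chi'_j; \psi),$$
and I would organize this product by orbits of the involution $(i, j) \mapsto (\sigma(i), \sigma'(j))$, where $\sigma, \sigma'$ realize the pairings on the two index sets. For a non-fixed orbit, the two characters are $\eta = \chi_i \chi'_j$ and $\chi_{\sigma(i)} \chi'_{\sigma'(j)} = \bar\chi_i^{-1} \overline{\chi'_j}^{-1} = \bar\eta^{-1}$. The local functional equation $\gamma(s, \eta; \psi)\gamma(1-s, \eta^{-1}; \bar\psi) = 1$ at $s = 1/2$, together with the covariance $\gamma(1/2, \overline{\eta^{-1}}; \psi) = \gamma(1/2, \eta^{-1}; \bar\psi)$ under complex conjugation (an instance of the invariance of $\gamma$ under field automorphisms applied to $c: \CC \to \CC$), gives $\gamma(1/2, \eta; \psi)\gamma(1/2, \bar\eta^{-1}; \psi) = 1$.

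For a fixed orbit $(i, j)$ -- meaning $\chi_i = \bar\chi_i^{-1}$ and $\chi'_j = \overline{\chi'_j}^{-1}$ -- the character $\chi_i \chi'_j$ has the form $z \mapsto (z/|z|)^{2l}$. At $s = 1/2$ the ratio of $L$-factors in Tate's formula equals $1$, since $L(s, \eta)$ depends on $\eta(z) = (z/|z|)^n$ only through $|n|$, and $\eta^{-1}$ has the same $|n|$. A direct computation of the epsilon factor---using that $\psi(x) = e^{\pi(x - \bar x)}$ differs from the standard additive character $\psi_0(x) = e^{2\pi i \operatorname{Re}(x)}$ by a multiplicative twist of $-i$, so that $\epsilon(1/2, \eta; \psi) = \eta(-i)\epsilon(1/2, \eta; \psi_0)$---shows that the $i^{|2l|}$ from the character cancels against the $(-i)^{2l}$ from this twist, giving $\gamma(1/2, \chi_i \chi'_j; \psi) = 1$. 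Assembling the pair and fixed-point contributions proves the theorem.

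The main obstacle is twofold: extracting and invoking the precise characterization of distinguished representations by Langlands parameters, and tracking the fixed-point epsilon factors carefully enough with the particular non-standard $\psi$ to obtain $+1$ rather than merely $\pm 1$ (the latter would follow already from conjugate self-duality $\bar\pi \cong \pi^\vee$ alone). Modulo these, the argument is a pairing-up telescope applied to the product expansion of the gamma factor.
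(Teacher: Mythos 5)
Your proposal is correct and follows essentially the same route as the paper: invoke the Langlands-data characterization of distinguished representations (Theorem \ref{thm:induced representation}), use multiplicativity of the Rankin--Selberg gamma factor to reduce to Tate's $\GL_1 \times \GL_1$ gamma factors, and then pair up indices by the product involution $(w,w')$, telescoping the non-fixed pairs via the functional equation at $s=\tfrac12$ and computing the fixed-point contributions to be $+1$ using the condition $\chi_i(-1)=1$. The only difference is cosmetic: you derive the self-pairing identity $\gamma(\tfrac12,\eta,\psi)\gamma(\tfrac12,\overline{\eta^{-1}},\psi)=1$ from the standard functional equation plus Galois covariance, and you spell out the $\epsilon$-factor computation for fixed orbits, where the paper simply cites these facts.
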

See Section \ref{section: Rankin-Selberg} for the definition of Rankin-Selberg integrals and Rankin-Selberg
gamma factors.\\
We will deduce Theorem \ref{thm: gamma factor} from the characterization of
irreducible  $G_n(\RR)$-distinguished representations of $G_n(\CC)$.
Actually, we will prove the following analogue of \cite[theorem B.1]{AL}
\begin{thm}\label{thm:dist implies main}
Let $(\pi,V)$ be an irreducible, admissible, generic and $G_n(\RR)$-distinguished representation of $G_n(\CC)$.
Then $\bar{\pi} \simeq \tilde{\pi}$, where $\tilde{\pi}$ is the contragredient representation of $\pi$.
\end{thm}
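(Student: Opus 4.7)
Since $\pi$ is irreducible, admissible, and generic, results of Vogan--Wallach identify it with a full normalized parabolic induction from the Borel: there exist characters $\chi_1,\ldots,\chi_n$ of $\CC^{\times}$ such that $\pi \simeq \mathrm{Ind}_{B_n}^{G_n(\CC)}(\chi_1 \otimes \cdots \otimes \chi_n)$, and the multiset $[\chi_1,\ldots,\chi_n]$ is an invariant of $\pi$. Using that the modulus character $\delta_{B_n}$ is invariant under complex conjugation, one checks directly that $\bar{\pi} \simeq \mathrm{Ind}(\bar{\chi}_1 \otimes \cdots \otimes \bar{\chi}_n)$, where $\bar{\chi}(z) := \chi(\bar{z})$, while $\tilde{\pi}$ is the induction attached to the multiset $[\chi_1^{-1},\ldots,\chi_n^{-1}]$ (the longest-Weyl reordering is absorbed by the irreducibility of the induced representation). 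Consequently, $\bar{\pi} \simeq \tilde{\pi}$ reduces to the equality of multisets $[\bar{\chi}_i] = [\chi_i^{-1}]$, i.e.\ to the invariance of $[\chi_i]$ under the involution $\chi \mapsto \bar{\chi}^{-1}$.

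The substantive content of the theorem is therefore a characterization of distinction at the level of Langlands data, in the spirit of \cite[Theorem B.1]{AL}: $\pi$ is $G_n(\RR)$-distinguished if and only if the multiset $[\chi_1,\ldots,\chi_n]$ is stable under $\chi \mapsto \bar{\chi}^{-1}$. To establish the necessity (which is all that Theorem \ref{thm:dist implies main} requires), I would analyze the $B_n$-orbits on the symmetric space $X_n$ introduced in Section~1, equivalently on the double-coset space $G_n(\RR) \backslash G_n(\CC) / B_n$. A distinguishing form $L$ on $\pi$ pulls back, in the induced model, to a distribution on $G_n(\CC)$ that is left-$(B_n, \delta_{B_n}^{1/2}\chi)$-equivariant and right-$G_n(\RR)$-invariant. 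A Bernstein--Zelevinsky style filtration of this model along the orbit stratification, executed in the smooth Fr\'echet moderate-growth category using the distribution-theoretic tools of \cite{AGS}, reduces the existence of $L$ to the non-vanishing of an equivariant functional on some single layer. Each layer corresponds, via Frobenius reciprocity, to an involution $\sigma$ of $\{1,\ldots,n\}$, and its contribution is non-zero precisely when $\chi_{\sigma(i)} = \bar{\chi}_i^{-1}$ for every $i$, which is exactly the required multiset symmetry.

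The principal obstacle is the archimedean filtration step: unlike the p-adic setting, the ``geometric lemma'' is not automatic, and one must verify the vanishing of the relevant higher $\mathrm{Ext}$ groups (equivalently, the exactness of the associated sequence of distributions) via a Bruhat-type transitivity theorem together with transverse-derivative analysis along the orbit strata. Granting the characterization, Theorem \ref{thm:dist implies main} is immediate, since the condition $[\bar{\chi}_i] = [\chi_i^{-1}]$ is exactly the stability of $[\chi_i]$ under $\chi \mapsto \bar{\chi}^{-1}$, so that $\bar{\pi}$ and $\tilde{\pi}$ arise from the same induction data and are therefore isomorphic.
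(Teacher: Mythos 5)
Your plan follows the paper's route exactly: realize $\pi$ as an irreducible principal series $I(\chi)$ (which is the content of Appendix~A, via Kostant and Vogan), reduce $\bar{\pi}\simeq\tilde{\pi}$ to the stability of the multiset $[\chi_1,\dots,\chi_n]$ under $\chi\mapsto\bar\chi^{-1}$, and establish that stability by stratifying $X_n$ by $B_n$-orbits indexed by involutions and applying the distribution-theoretic Frobenius reciprocity of \cite{AL}; this is precisely Theorem~\ref{thm:induced representation} and its proof in Section~4. One point your sketch glosses over: the assertion that a layer attached to $\sigma$ contributes ``precisely when $\chi_{\sigma(i)}=\bar\chi_i^{-1}$'' is not correct as stated, because the transverse-derivative pieces $\mathrm{Sym}^k$ with $k>0$ can a priori contribute under a different numerical condition; in the paper these are eliminated in Lemma~\ref{lem:dim est} only because $\chi$ is taken in the Langlands ordering $\lambda_1\ge\cdots\ge\lambda_n$, which forces $k=0$ and hence yields exactly $w\chi=\bar\chi^{-1}$ (and the extra condition $\chi_i(-1)=1$ at fixed points). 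Also, the stratification step (Lemma~\ref{stratification}, i.e.\ \cite[Prop.~B.3]{AL}) is a one-way non-vanishing implication, not an exactness or higher-$\mathrm{Ext}$ vanishing statement, so the ``Bernstein--Zelevinsky exactness'' framing overstates what is required.
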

Let $\chi$ be a character of $B_n$. We will denote by $I(\chi)$ the normalized
parabolic induction representation
$I(\chi) := Ind_{B_n}^{G_n(\CC)}(\chi)$ of the character $\chi = (\chi_1, ..., \chi_n)$ from $B_n(\CC)$ to  $G_n(\CC)$. We remind to the reader that this space consists of smooth functions such that
$f(bg) = \left(\chi \delta^{1/2}\right)(b)f(g)$ for all $b \in B_n(\CC)$ and all $g \in G_n(\CC)$. The group
$G_n(\CC)$ acts on $I(\chi)$ by right translations.
Let $S_n$ be the group of permutations on $n$ elements. It acts naturally on the characters of $B_n$.
Theorem \ref{thm:dist implies main} follows from the following
\begin{thm}\label{thm:induced representation}
Let $(\pi,V)$ be an irreducible and
$G_n(\RR)$-distinguished representation of $G_n(\CC)$.
Let $\chi = (\chi_1,\chi_2,...,\chi_n)$ be a character of $B_n$ such that
$$|\chi(t) | = |t_1|^{\lambda_1} |t_2|^{\lambda_2} ... |t_n|^{\lambda_n} \text{  with  }
\lambda_1 \geq \lambda_2 \geq ... \geq \lambda_n. $$
Suppose $\pi$ is the Langlands
quotient of $I(\chi)$, that is, the unique irreducible quotient
of $I(\chi)$. Then there exists an involution $w \in S_n$ such that $w\chi = \overline{(\chi^{-1})}$.
Moreover, we can choose this $w$ such that for every fixed point $i$ of $w$ we have $\chi_i(-1)=1$.
\end{thm}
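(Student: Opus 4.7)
The plan is to reduce to a distinction statement about the full induced representation $I(\chi)$ and then to carry out an orbit analysis on the symmetric variety $X_n$.

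Since $\pi$ is a quotient of $I(\chi)$, any nonzero continuous $G_n(\RR)$-invariant functional on $V$ pulls back to a nonzero continuous $G_n(\RR)$-invariant functional on $I(\chi)$; so it suffices to prove the conclusion under the weaker hypothesis that $I(\chi)$ itself is $G_n(\RR)$-distinguished. Identifying $G_n(\CC)/G_n(\RR)$ with $X_n = \{x \in G_n(\CC) : x\bar x = I\}$, a $G_n(\RR)$-invariant continuous functional on $I(\chi)$ is the same data as a continuous distribution on $X_n$ transforming by $\chi\delta^{1/2}$ under the twisted $B_n$-action $g\cdot x = g x \bar g^{-1}$.

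I would then analyze such distributions via a Bruhat-type orbit decomposition of $X_n$ under the twisted $B_n$-action. A permutation matrix $w$ lies in $X_n$ exactly when $w^2=I$, i.e.\ when $w$ is an involution in $S_n$, and these (together with sign data $\epsilon_i\in\{\pm1\}$ at the fixed points of $w$) serve as representatives of the $B_n$-orbits on $X_n$. By a standard localisation-on-support argument, any equivariant distribution is supported on the closure of a single such orbit, and its existence on the orbit of $w$ is controlled by the $B_n$-stabilizer of $w$. The torus part of the twisted stabilizer equation $tw\bar t^{-1}=w$ reads $t_i \bar t_{w(i)}=1$, which at the level of characters translates to $w\chi=\overline{\chi^{-1}}$; the extra $\RR^\times$-component of the stabilizer at each fixed point $i$ of $w$ gives the parity condition $\chi_i(-1)=\epsilon_i$.

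The main obstacle is the final step: showing that the supporting orbit can be chosen with $\epsilon_i = +1$ at every fixed point, and that the pulled-back functional from the Langlands quotient does indeed live on such an orbit rather than on a sub-maximal one. I expect to handle this by combining the dominance ordering $\lambda_1\ge\cdots\ge\lambda_n$ (which forces $\chi_i$ at a fixed point of $w$ to be unitary, hence compatible with the $+1$ sign) with an intertwining-operator analysis confirming that the distributions coming from the ``wrong-sign'' orbits factor through a proper subrepresentation of $I(\chi)$ and therefore vanish on its Langlands quotient.
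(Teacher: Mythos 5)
Your overall strategy — pull back the invariant functional to $I(\chi)$, view it as a distribution on $X_n$ equivariant for the twisted $B_n$-action, and analyze by orbits — matches the paper's. But there is a structural error that derails the second half.

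You assert that $B_n$-orbits on $X_n$ are parametrized by involutions $w$ \emph{together with} sign data $\epsilon_i \in \{\pm 1\}$ at the fixed points. This is not the case: the paper's Lemma~\ref{orbits classification} shows the orbits are parametrized by involutions alone, and any putative sign is absorbed into the $B_n$-action (already for $n=1$, the torus $\CC^\times$ acts transitively on the unit circle $X_1$, so there is only one orbit). The $\pm1$ data you are seeing is not an orbit invariant; it lives inside the stabilizer $M^w \subset B_n^w$, specifically as the component $\{\mathrm{diag}(a_1,\dots,a_n): a_i = \pm1 \text{ if } w(i)=i, \ a_i=1 \text{ otherwise}\}$ (equation~\eqref{eq:M^w as image}). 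Because of this, your proposed ``final step'' — arranging the supporting orbit to have all $\epsilon_i=+1$, and ruling out wrong-sign orbits via intertwining operators — addresses a problem that does not exist. The parity condition $\chi_i(-1)=1$ is forced automatically by stabilizer equivariance: once $\kappa\equiv 0$, the condition $\chi|_{M^w}=\alpha_\kappa|_{M^w}=1$ evaluated on the $\pm1$ component of $M^w$ gives $\chi_i(-1)=1$ at each fixed point.

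The step you underplay is the one that actually requires work and is where the dominance assumption enters. By Lemma~\ref{stratification}, a nonzero equivariant distribution on $X_n$ need not restrict to an orbit: it may be supported on the closure of an orbit $B(w)$ and involve transverse derivatives, i.e., one gets a nonzero element of $\cS^*\bigl(B(w), Sym^k(CN^{X}_{B(w)})\bigr)^{B,\chi\delta_0^{-1/2}}$ for some $k \ge 0$. The decisive point (Lemma~\ref{lem:dim est}) is that the ordering $\lambda_1\ge\cdots\ge\lambda_n$ forces $\lambda_i + \lambda_{w(i)}=0$ for all $i$, which in turn forces the multi-index $\kappa$ to vanish, i.e., $k=0$ and the distribution is genuinely carried by the orbit with no transverse derivatives. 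This is what both yields $w\chi=\overline{\chi^{-1}}$ and clears the way for the parity argument. Without an explicit control on these $Sym^k(CN)$ contributions, the proof is incomplete even after the orbit parametrization is corrected, and the ``intertwining operator'' idea in your last paragraph does not substitute for it.
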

\begin{rem} a) Note that the conditions $w(i)=i$,
$\chi_{w(i)}=\bar{\chi_i}^{-1}$, and $\chi_i(-1)=1$ imply
that $\chi_i$ is $\GL_1(\RR)$-distinguished. Indeed, $\chi_i = \bar{\chi_i}^{-1}$ implies that $\chi_i$ is $\RR_{+}$-invariant. Together with the condition $\chi_i(-1)=1$ this means that $\chi_i$ is $\RR^{\times}(=\GL_1(\RR) )$-invariant. \\
b) Let $(\pi,V)$ be an irreducible representation of $G_n(\CC)$. The existence of
$I(\chi)$ with the properties stated in Theorem \ref{thm:induced representation}
is a well-known fact, see \cite[Theorem 5.4.1]{WallachB1}. Also, for a
given $\pi$, such a character $\chi$ is unique. It is well-known that the Langlands
quotient of $I(\chi)$ is generic if and
only if $I(\chi)$ is irreducible, see Appendix. Therefore,
Theorem \ref{thm:dist implies main} follows from Theorem \ref{thm:induced representation}.
\end{rem}
A similar result was proven by Marie-Noelle Panichi in her Ph.D., see \cite[Theorem 3.3.6]{Pan}.
As an application of our classification we deduce Theorem \ref{thm: gamma factor}.
In section \ref{sec: functionals} we prove a new type of integral identity for Whittaker functions
on generic $G_n(\RR)$-distinguished representations. This proves \cite[Assumption 5.2]{LapMao}. In the $p$-adic case a
similar identity was proven in \cite[Corollary 7.2]{O}. Our proof is similar to the proof in the $p$-adic case,
but in the archimedean case there are many analytical difficulties. We overcome them
in sections \ref{section: Rankin-Selberg}-\ref{sec:Asai}. \\
Finally, in Appendix B we prove a converse type theorem. We prove that if $(\pi,V) = Ind(\chi)$ is
an irreducible, generic, admissible unitary representation of $G_n(\CC)$ such that for every unitary character 
$\chi'(z) = (z/|z|)^{2m}$ with $m \in \ZZ$ we have $$\gamma(\frac{1}{2}, \pi \times \chi', \psi) = 1,$$ then
$\pi$ is $G_n(\RR)$-distinguished. The proof is done by a combinatorics argument combined with the Tadic-Vogan classification of
the unitary dual of $G_n(\CC)$.
\subsection*{Acknowledgements}
I would like to thank Omer Offen for posing to me this question and providing many explanations
of the subject. \newline
I am grateful to Dmitry Gourevitch and Erez Lapid for many fruitful discussions and
their help. \newline
During the conference in Jussieu, June 2014 I told the results of this paper to Herve Jacquet.
I would like to thank him, his comments were very helpful. \\
I also wish to thank Avraham Aizenbud, Uri Bader, Max Gurevich, Job Kuit, Nadir Matringe, Amos Nevo, Henrik Schlictkrull
and Dror Speiser on useful conversations and remarks.
\newline The research was supported by ISF grant No.  1394/12.
\end{section} 
\begin{section}{Notation and preliminaries}
Let $F$ be either $\RR$ or $\CC$. Let $M(a \times b, F)$ be the space of matrices with
$a$ rows and $b$ columns with entries in $F$.
Let $\eta_n = (0,0, \hdots, 1) \in M(1 \times n, \RR)$.
Let $P_n(\RR)$ be the subgroup of $G_n(\RR)$ consisting of all $n \times n$ matrices
with the last row equal to $\eta_n$.\\
Let $U_n(F)$ be the group of all upper triangular matrices in $M(n \times n, F)$
with unit diagonal. Let $$K_n=\{ g \in G_n(\CC) : g \cdot ^t\hspace{-0.6mm} \bar{g} = I \}$$ be the standard maximal compact subgroup of $G_n(\CC)$.\\
For $V$ a finite dimensional vector space over $\RR$ we denote by $\cS(V)$ the Schwartz space of all
infinitely differentiable functions $f:V \to \CC$ of rapid decay. \\
Let $\Phi \in \cS(V)$, where $V = M(a \times b, \CC) $. We denote by $\hat{\Phi}$ the Fourier transform of $\Phi$.
It is a function on the same space, defined by
$$ \hat{\Phi}(X) = \int \Phi(Y) \psi(-Tr(^t \hspace{-1mm} X Y)) dY.$$
For $\Phi \in \cS(\CC^n)$ and $g \in G_n(\CC)$ we denote by $(R(g)\Phi)(x):=\Phi(xg)$ the right translation
of $\Phi$ by $g$.\\
For $z =x+iy \in \CC$ we denote by $|z|=\sqrt{x^2+y^2}$ the usual absolute value of $z$ and by
$|z|_{\CC}=|z|^2 = x^2 + y^2$ the square of the usual absolute value. Note that $\mu(zA) =|z|_{\CC} \mu(A)$, where $A \subset \CC$ is an open set and $\mu$ is a Haar measure on $\CC$. \\
Let $W_{n}=S_{n}$ and let $W_{n,2}=\{w\in W_{n}:w^{2}=1\}$ be the set of involutions in $W_{n}$.
For $w \in W_{n,2}$ set $$I_w =\{(i,j): i > j, w(i) > w(j)\},$$ and define for any
function $\kappa:I_{w} \to \ZZ_{\geq 0}$ a character $\alpha_{\kappa}$ of $B_n$ by the formula
$$\alpha_{\kappa}(\text{diag}(t_1,...,t_n)) = \prod_{(i,j) \in I_w} {\big[{\frac{t_i }{t_j}}\big]^{\kappa(i,j)}}. $$
We will identify $\alpha_{\kappa}$ with the one-dimensional representation of $B_n$
on the vector space $\CC$ with the action of $\alpha_{\kappa}$. By abuse of notation
we will denote both the function and the one-dimensional representation by the same
letter $\alpha_{\kappa}$. \\
We refer the reader to the notation of \cite{AL}. For the convenience of the reader we write here
notation and formulations of some of the theorems that appear in \cite{AL},
in versions that are suitable to this work.
\newline
Let $G$ be an arbitrary group.
\begin{itemize}
\item  For any $G$-set $X$ and a point $x \in X$, we denote by $G(x)$ the
$G$-orbit of $x$ and by $G^x$ the stabilizer of $x$.
\item
For any representation of $G$ on a vector space $V$ and
a character $\chi$ of $G$, we denote by $V^{G,\chi}$ the subspace
of $(G,\chi)$-equivariant vectors in $V$.
\item
Given manifolds $L \subseteq M$, we denote by $N_L^{M} := \big(T_{M|L} \big)/T_L$ the normal
bundle to $L$ in $M$ and by $CN_L^M := (N_L^M)^*$ the conormal bundle. For any point
$y \in L$, we denote by $N_{L,y}^M$ the normal space to $L$ in $M$ at the point $y$
and by $CN_{L,y}^M$ the conormal space to $L$ in $M$ at the point $y$.
\item The symmetric algebra of a vector space $V$ will be denoted by
$$Sym(V) = \oplus_{k \geq 0} Sym^k(V).$$
\item The Fr\'echet space of Schwartz functions on a Nash manifold $X$ will be denoted by
$\cS(X)$ and the dual space of Schwartz distributions will be denoted by $\cS^*(X) := \cS(X)^*$.
\item For any Nash vector bundle $E$ over $X$ we denote by $\cS(X,E)$ the space of
Schwartz sections of $E$ and by $\cS^*(X,E)$ its dual space.
\end{itemize}
See \cite[p. 309]{AL} for more details. \newline
Recall that if $X$ is a smooth manifold and $G$ acts on $X$, then $X = \bigcup_{i=1}^{l} X_i$ is called
a $G$-invariant stratification if all sets $X_i$ are $G$-invariant and there
is some reordering $X_{i_1}, X_{i_2}, ..., X_{i_l}$ of $X_1,...,X_l$ such that all the sets
$X_{i_1}, X_{i_1} \cup X_{i_2}, ... , X_{i_1} \cup X_{i_2} \cup ... \cup X_{i_k} , ..., X = X_{i_1}\cup ... \cup X_{i_l}$ are open in $X$.
\begin{lem} \label{stratification}
Let a real algebraic Lie group $G$ act on a real algebraic smooth manifold $X$. Let
$X = \bigcup_{i=1}^{l} X_i$ be a $G$-invariant stratification. Let $\chi$
be a character of $G$. If $$\cS^*(X)^{G,\chi} \ne 0,$$ then there exists $1\le i \le l$ and
$k \ge 0$ such that $$\cS^*{(X_i, Sym^k(CN_{X_i}^X))^{G,\chi}} \ne 0.$$
\end{lem}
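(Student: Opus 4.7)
The plan is to prove the lemma by a double induction: first on the length $l$ of the stratification to reduce to a single closed stratum, then on the order of transversal jet along that stratum to land in a symmetric power of the conormal bundle. Both steps use standard exact sequences in the Schwartz category of Nash manifolds, as developed in \cite{AGS} and employed throughout \cite{AL}.

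For the outer induction, use the reordering hypothesis to write $U := X_{i_1} \cup \cdots \cup X_{i_{l-1}}$, which is $G$-invariant and open in $X$, and $Z := X_{i_l}$, its $G$-invariant closed complement. There is a $G$-equivariant short exact sequence of Fr\'echet spaces
$$0 \to \cS(U) \to \cS(X) \to \cS(X)/\cS(U) \to 0,$$
which dualises to
$$0 \to \cS^*_Z(X) \to \cS^*(X) \to \cS^*(U),$$
where $\cS^*_Z(X) := (\cS(X)/\cS(U))^*$ denotes the space of distributions supported on $Z$. Taking $(G,\chi)$-equivariants is left exact, so $\cS^*(X)^{G,\chi} \ne 0$ forces at least one of $\cS^*(U)^{G,\chi}$ or $\cS^*_Z(X)^{G,\chi}$ to be nonzero. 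In the first case the induction hypothesis, applied to the shorter stratification $X_{i_1},\ldots,X_{i_{l-1}}$ of $U$, produces the desired pair $(i,k)$; in the second case, I turn to the order filtration.

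For the inner step, $\cS(X)/\cS(U)$ carries a canonical decreasing $G$-equivariant filtration by vanishing order along $Z$, whose $k$-th associated graded is naturally $\cS(Z,Sym^k(N_Z^X))$. Dually, $\cS^*_Z(X)$ carries an increasing $G$-equivariant filtration whose $k$-th associated graded is $\cS^*(Z,Sym^k(CN_Z^X))$. If $\cS^*_Z(X)^{G,\chi} \ne 0$, then a nonzero equivariant functional must pair non-trivially with some associated graded piece, yielding the required $k$ with $\cS^*(Z,Sym^k(CN_Z^X))^{G,\chi} \ne 0$.

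The main obstacle is analytic rather than combinatorial: one must guarantee that a nonzero $(G,\chi)$-equivariant element of the total filtered space really descends to a nonzero equivariant element on some graded piece. In the naive smooth category this can fail, because the jet filtration need not be Hausdorff once one restricts to equivariants; in the Schwartz/Nash framework the filtration is strict, exhaustive, and separated, so the standard localisation principle applies. I would quote the relevant exactness and localisation results from \cite{AGS} rather than reprove them, since they are precisely the package designed for this kind of stratification argument.
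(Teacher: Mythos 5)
The paper does not prove this lemma itself; it simply cites \cite[Proposition B.3]{AL}, of which the lemma is a special case, and your argument reproduces the standard two-step proof of that result (restriction/extension exact sequence for the open stratum, then the transversal-order filtration on distributions supported on the closed stratum), so you are following essentially the same route as the cited source. One small slip worth noting: the $k$-th graded piece of the decreasing order-of-vanishing filtration on $\cS(X)/\cS(U)$ is $\cS(Z,\mathrm{Sym}^k(CN_Z^X))$, not $\cS(Z,\mathrm{Sym}^k(N_Z^X))$, since the $k$-th transversal Taylor coefficient along $Z$ is a section of the $k$-th symmetric power of the \emph{conormal} bundle; this typo does not propagate, as your description of the dual graded pieces of $\cS^*_Z(X)$ and the resulting conclusion are stated correctly.
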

This lemma is a special case of \cite[Proposition B.3]{AL}.
\begin{thm}\label{Frobenius}\cite[Theorem B.6]{AL}
Let $G$ be a Lie group acting transitively on a smooth manifold $Z$ and
let $\varphi:X \to Z$ be a $G$-equivariant smooth map. Fix $z \in Z$ and let
$X_z$ be the fiber of $z$. Let $\chi$ be a tempered character of $G$ \cite[Definition 5.1.1]{AGS},
and let $\delta_{G}$ and $\delta_{G_{z}}$
be the modulus characters of the groups $G$ and $G_{z}$ respectively.
Then $S^*(X)^{G,\chi}$ is canonically isomorphic to
$S^*(X_z)^{G_z,\chi \delta_{G_z}^{-1} \delta_G}$. \newline
Moreover, for any $G$-equivariant bundle $E$ on $X$, the space
$S^*(X,E)^{G,\chi}$ is canonically isomorphic to $S^*(X_z,E|{X_z})^{G_z,\chi \delta_{G_z}^{-1} \delta_G }$.
\end{thm}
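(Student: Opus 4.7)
\medskip

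The plan is to prove this Frobenius reciprocity for Schwartz distributions by reducing the setup to an induced-space picture and then constructing mutually inverse restriction and averaging maps. First, since $G$ acts transitively on $Z$, the orbit map $g \mapsto g \cdot z$ identifies $Z$ with $G/G_{z}$ as a Nash $G$-manifold, and the quotient $G \to G/G_{z}$ is a principal $G_{z}$-bundle in the Nash category. Pulling back the $G$-equivariant map $\varphi : X \to Z$ along this identification exhibits $X$ as the associated bundle $G \times_{G_{z}} X_{z}$, with $G$ acting by left multiplication on the first factor and $G_{z}$ acting diagonally in the quotient $(g,x) \sim (gh^{-1}, hx)$.

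The second step would be to define a restriction map $R : \cS^{*}(X)^{G,\chi} \to \cS^{*}(X_{z})^{G_{z},\chi'}$ with $\chi' = \chi \delta_{G_{z}}^{-1}\delta_{G}$. Given $T \in \cS^{*}(X)^{G,\chi}$, one restricts $T$ to the closed Nash submanifold $X_{z} \subseteq X$ via the extension-by-zero/localization tools in the Schwartz category. The fact that $G_{z}$ preserves $X_{z}$ implies $G_{z}$-equivariance of $R(T)$, and a direct change-of-variable computation on $G \times_{G_{z}} X_{z}$ shows that converting $G$-semi-invariant integration along $X$ to $G_{z}$-semi-invariant integration along the fiber inserts precisely the Jacobian factor $\delta_{G}/\delta_{G_{z}}$, yielding the character $\chi'$.

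Third, I would construct the inverse by averaging. Given $\xi \in \cS^{*}(X_{z})^{G_{z},\chi'}$, define $E(\xi) \in \cS^{*}(X)^{G,\chi}$ informally by
\[
 E(\xi)(f) \;=\; \int_{G/G_{z}} \chi(g)\, \xi\bigl( (g^{-1}\cdot f)|_{X_{z}} \bigr)\, d\bar g,
\]
using the quasi-invariant measure $d\bar g$ on $G/G_{z}$ with modulus $\delta_{G}/\delta_{G_{z}}$. Temperedness of $\chi$ together with the Schwartz decay of $f$ (lifted to $G \times X_{z}$) guarantees absolute convergence; the equivariance character $\chi'$ on $\xi$ is exactly what makes the integrand a well-defined function on $G/G_{z}$. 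One then checks directly that $E(\xi)$ is $(G,\chi)$-equivariant and that $R \circ E$ and $E \circ R$ are the identity, using that $f \in \cS(X)$ is reconstructed from its restrictions along the various $G$-translates of $X_{z}$. The bundle version is identical: $E|_{X_{z}}$ is canonically a $G_{z}$-equivariant bundle, and both maps $R$ and $E$ are defined sectionwise.

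The main obstacle would be the analytic bookkeeping inside the Schwartz/Nash category: making the restriction-to-a-closed-submanifold, the extension-by-an-averaging-integral, and the switch between integration over $X$ and over $G/G_{z} \times X_{z}$ simultaneously meaningful and continuous as maps of dual Fr\'echet spaces. Temperedness of $\chi$ is exactly what keeps the averaging integral convergent against Schwartz inputs, and the modulus bookkeeping is cleanest by working with densities throughout rather than with functions; the stated character $\chi \delta_{G_{z}}^{-1}\delta_{G}$ then emerges as the unique twist compatible with both the quasi-invariant measure on $G/G_{z}$ and the $G_{z}$-transverse Jacobian at $X_{z}$.
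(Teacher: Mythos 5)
The paper does not actually prove this statement; it is quoted verbatim from Aizenbud--Lapid (Theorem B.6 of the appendix to \cite{FLO}), so there is no internal proof to compare against. Judged on its own, your proposal follows the right conceptual route (the associated-bundle picture $X \cong G \times_{G_z} X_z$, an averaging map in one direction, and a fiber-extraction map in the other), but one of the two maps is not constructed correctly, and the convergence side is asserted rather than proved.

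The problematic step is your map $R$: you write that given $T \in \cS^*(X)^{G,\chi}$ one ``restricts $T$ to the closed Nash submanifold $X_z$ via the extension-by-zero/localization tools in the Schwartz category.'' This is not a well-defined operation. Schwartz distributions localize to \emph{open} Nash subsets, and Schwartz functions on a closed submanifold extend to the ambient space, but the dual statement --- that a distribution restricts to a closed submanifold of positive codimension --- fails in general: the value $T(\tilde{h})$ depends on the chosen extension $\tilde{h}$ of $h \in \cS(X_z)$, so no functional on $\cS(X_z)$ is produced. What actually makes the construction work is the equivariance combined with the fact that $\varphi: X \to Z$ is a submersion: choosing a local Nash section $\sigma: U \to G$ of $G \to G/G_z$ near the base point gives an equivariant isomorphism $\varphi^{-1}(U) \cong U \times X_z$, and the $(G,\chi)$-equivariance forces $T$ to be of the form (Haar density on $U$) $\otimes\ \xi$ for a unique $\xi \in \cS^*(X_z)$; this $\xi$, not a na\"ive restriction, is the fiber distribution, and its independence of the section and its $(G_z,\chi\delta_{G_z}^{-1}\delta_G)$-equivariance are what one then has to verify. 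Your proposal skips this mechanism entirely, so as written $R$ does not exist.

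Secondly, the claim that ``temperedness of $\chi$ together with the Schwartz decay of $f$ guarantees absolute convergence'' of the averaging integral defining $E(\xi)$ is where the real analytic work lives and is not justified. One needs that $g \mapsto (g^{-1}\cdot f)|_{X_z}$ is a Schwartz map $G/G_z \to \cS(X_z)$ (or at least decays faster than the polynomial growth of $\xi$ and $\chi$), and this requires semi-algebraic estimates relating the Nash structures on $G/G_z$, $X_z$ and $X$ --- precisely the content that makes this a theorem in the Nash/Schwartz category rather than a formal manipulation. Without those estimates the convergence of $E(\xi)(f)$, and hence its continuity on $\cS(X)$, is unproved. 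Finally, even granting both maps, the proposal does not address why $E \circ R = \mathrm{id}$ and $R \circ E = \mathrm{id}$; with the corrected description of $R$ via a local section this becomes a genuine (if routine) computation with the quasi-invariant measure, but it is not automatic.
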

\end{section} 
%
\begin{section}{Some matrix spaces decompositions}
\newcommand{\tp}{^{t} \hspace{-1mm}}
In this section we obtain some matrix spaces decompositions that will be used in this work.
In the following lemma we analyze the structure of orbits of the action of
the Borel subgroup $B_n$ on
$X_n$. Let $W_n = S_n$ be the Weyl group of $G_n(\CC)$.
\begin{lem}\label{orbits classification}
There is a bijection between $B_n \backslash X_n = B_n \backslash G_n(\CC)/G_n(\RR)$ and the space of involutions
$W_{n,2} = \{w \in W_n: w^2 = 1\}$.
\end{lem}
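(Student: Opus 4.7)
\medskip

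\noindent\textbf{Proof plan.} The plan is to use the Bruhat decomposition $G_n(\CC) = \bigsqcup_{w \in W_n} B_n w B_n$ to define a $B_n$-invariant map $\Phi : X_n \to W_n$, identify its image as $W_{n,2}$, and show that each nonempty fibre of $\Phi$ is a single $B_n$-orbit. For $x \in X_n$, let $\Phi(x) = w$ where $x \in B_n w B_n$. Since the entries of $B_n$ are closed under complex conjugation (upper triangularity is preserved by $\bar{\cdot}$), we have $\bar b \in B_n$ for every $b \in B_n$; hence the twisted conjugation $x \mapsto bx\bar b^{-1}$ sends $B_n w B_n$ into itself, so $\Phi$ factors through $B_n \backslash X_n$.

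\medskip

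\noindent Next I show the image of $\Phi$ lies in $W_{n,2}$. If $x = b_1 w b_2 \in X_n$, then on the one hand $\bar{x} = \bar{b_1}\,\bar{w}\,\bar{b_2} = \bar{b_1}\,w\,\bar{b_2} \in B_n w B_n$, since the permutation matrix $w$ is real; on the other hand, $\bar{x} = x^{-1} = b_2^{-1} w^{-1} b_1^{-1} \in B_n w^{-1} B_n$. Since Bruhat cells are disjoint, this forces $w = w^{-1}$, i.e.\ $w \in W_{n,2}$. Surjectivity of $\Phi$ is immediate: for any $w \in W_{n,2}$ the matrix $w$ itself lies in $X_n$, because $w\bar w = w^2 = I_n$, and $\Phi(w) = w$.

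\medskip

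\noindent The main obstacle is the fibre analysis: I need to prove that $X_n \cap B_n w B_n$ is a single $B_n$-orbit under twisted conjugation. Consider the orbit map $f_w : B_n \to X_n$, $b \mapsto b w \bar b^{-1}$; the computation $(b w \bar b^{-1}) \overline{(b w \bar b^{-1})} = b w \bar b^{-1} \bar b\, w\, b^{-1} = b w^2 b^{-1} = I_n$ shows its image is contained in $X_n \cap B_n w B_n$, and equals the $B_n$-orbit of $w$. To see that $f_w$ surjects onto $X_n \cap B_n w B_n$, take $x = b_1 w b_2 \in X_n$; the defining equation $x\bar x = I_n$ rewrites (using $\bar w = w = w^{-1}$) as
\[
b_2 \bar b_1 \;=\; w\,(b_1^{-1} \bar b_2^{-1})\,w.
\]
The non-uniqueness of the Bruhat factorisation allows me to replace $(b_1, b_2)$ by $(b_1 c,\, (w c^{-1} w)\, b_2)$ for any $c \in B_n \cap w B_n w^{-1}$. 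The plan is to use this freedom, together with the compatibility relation above, to bring the decomposition into the form $b_2 = \bar b_1^{-1}$, so that $x = b_1 w \bar b_1^{-1} = f_w(b_1)$. This last step is the only real calculation: it amounts to solving a Lang-style equation in the group $B_n \cap w B_n w^{-1}$ (the fixed-point subgroup of the involution $b \mapsto w \bar b w$), and it is where one uses that $w^2 = 1$ in an essential way together with the triangular structure so that the equation can be solved inductively along the diagonals.
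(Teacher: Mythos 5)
Your framework is correct and is essentially the Springer/Richardson--Springer point of view: use the Bruhat decomposition to define a $B_n$-invariant map to $W_n$, show its image lies in the involutions, and show each nonempty fibre is a single twisted-conjugation orbit. The first three steps (well-definedness, $w=w^{-1}$ from comparing $\bar x = x^{-1}$ with the Bruhat cell of $\bar x$, surjectivity via $w\in X_n$) are all fine.

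The genuine gap is in the last and decisive step: you assert that one can always adjust $(b_1,b_2)$ using the freedom $c\in B_n\cap wB_nw^{-1}$ so as to reach $b_2=\bar b_1^{-1}$, and you acknowledge that this ``amounts to solving a Lang-style equation,'' but you never actually solve it. That is not a cosmetic omission: the surjectivity of $b\mapsto bw\bar b^{-1}$ onto $X_n\cap B_nwB_n$ is precisely the content of the lemma, and it is not automatic. (It is a vanishing statement for a nonabelian, $w$-twisted Galois $H^1$ in the solvable group $B_n\cap wB_nw^{-1}$, which needs an argument --- typically peeling off the unipotent radical by successive central filtrations, then Hilbert 90 on the torus quotient.) A small further inaccuracy: $B_n\cap wB_nw^{-1}$ is not the fixed-point subgroup of $b\mapsto w\bar bw$; it is the subgroup of $B_n$ that this involution preserves.

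The paper avoids doing the full Lang argument in $B_n$ by citing Lapid--Rogawski (see also Springer) for the statement that every $B_n$-orbit on $X_n$ already meets the normalizer $N(T)$. This encapsulates the unipotent-radical part of the surjectivity. With a representative $x=dw$, $d\in T$, in hand, the remaining twisted-conjugation problem lives entirely in the torus, and because $w$ is an involution it splits into $1\times1$ and $2\times 2$ blocks where it is solved by an explicit elementary computation (Hilbert 90 for $\GL_1$). If you want your proof to stand on its own, you must either prove the Lang-type surjectivity in $B_n\cap wB_nw^{-1}$ directly (filtration by the lower central series of the unipotent part, then the torus), or invoke the same reduction to $N(T)$ the paper uses. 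As written, the hardest step is a ``plan,'' not a proof.
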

\begin{proof}
Recall that $X_n = \{x \in G_n(\CC) | x \cdot \bar{x} = I\}$. Let $x \in X_n$.
Let $$ T = \{ diag(d_1,...,d_n) | \hspace{1mm} d_i \in \CC^*\ \text{ for all } i \}$$
be a maximal torus in $G_n(\CC)$. From \cite[Lemma 4.1.1]{LapRog}, see also \cite{Springer},
the $B_n$-orbit of $x$ intersects the normalizer $$N(T) := \{g \in G_n(\CC) | gTg^{-1} = T\}.$$
It is a well-known fact that $N(T) =\{ d \cdot w | d \in T, w \in W_n \}$.
Thus, we may assume $x = d w$, where $w\in W_n$ and $d=diag(d_1,d_2,...,d_n)$.
Note that $w$ is uniquely determined by $x$.
Since $x \cdot \bar{x} = I$, we have $d w =  w^{-1} \bar{d}^{-1}$. We obtain
$w=w^{-1}$ and therefore $w^2 =1$, i.e., $w \in W_{n,2}$. \newline Therefore, we can assume $w \in W_{n,2}$ in the decompostion $x= d w$. On the other hand, it is clear that different involutions $w,w' \in W_{n,2}$ belong to disjoint orbits
of $B_n$. Indeed, $l(b)w := bw\bar{b}^{-1} \neq w'$ for all $b \in B_n$. \newline
It remains to show that the $B_n$-orbit of $x= d w$ contains the point $w$, i.e.,
there is some $b \in B_n$ such that $\lambda(b)x = w$.
Since $w$ is an involution it is enough to check the claim for $1 \times 1$ and
$2 \times 2$ matrix. For $1\times 1$ matrix $x=(b)_{1\times 1}$, the assumption $x\bar{x} = I$ gives
$b\bar{b} = 1$, and we want to prove that $b = \mu \bar{\mu}^{-1}$. Clearly, there is such $\mu$. \newline
For a $2\times 2$ matrix of the form
$ b = \left( \begin{array}{cc} d_1 & 0 \\ 0 & d_2 \\ \end{array} \right)$,
the assumption
$$ x = b \left(\begin{array}{cc} 0 & 1 \\ 1 & 0 \\ \end{array} \right)
= \left( \begin{array}{cc} 0 & d_1 \\ d_2 & 0 \\ \end{array} \right) \in X $$
gives the condition $d_1\bar{d_2} = 1$ on the entries $d_1,d_2$. We seek for
an invertible matrix
$\left(\begin{array}{cc}\mu_1 & 0 \\0 & \mu_2 \\\end{array}\right)$
such that
\begin{equation}\label{matrix identity}
\left(\begin{array}{cc} \mu_1 & 0 \\ 0 & \mu_2 \\ \end{array} \right)
\left(\begin{array}{cc} 0 & d_1 \\ d_2 & 0 \\ \end{array} \right)
\left(\begin{array}{cc}\bar{\mu_1}^{-1} & 0 \\0 & \bar{\mu_2}^{-1} \\\end{array}\right) =
\left(\begin{array}{cc} 0 & 1 \\ 1 & 0 \\ \end{array} \right).
\end{equation}
Matrix multiplication gives the condition $d_1 \mu_1 \bar{\mu_2}^{-1} = 1$, and clearly there
are such $\mu_1, \mu_2$.
\end{proof}
In the next paragraph let us fix $n$ and denote $G = G_n(\CC), H = G_n(\RR)$.
Our next goal is to obtain a generalized Cartan decomposition $G = K A H$, where
$K$ is a maximal compact subgroup of $G$ consisting of all unitary matrices in $G$ and $A$ is a torus which we will now describe.
Let $m = [n/2]$.
Note that $H = G^{\sigma}$ and $K = G^{\tau}$, where $\sigma(g) = \bar{g}$ and
$\tau(g) = g^* = ^{t} \hspace{-1mm} \bar{g}$. Let $\gotG$ be the Lie algebra of $G$ over the field $\CC$.
Following \cite[Fact 2.1,page 7]{Kob}, we take
$\mathfrak{a}$ to be a maximal abelian subspace in
$$\gotG^{-\sigma,-\tau} = \{ X \in \gotG : \tau X = \sigma X = - X\}. $$
Following this recipe, let us define $$\mathfrak{a} = \sum_{j=1}^m{i\RR(E_{2j+1,2j} - E_{2j,2j+1}}).$$
Recall that $$exp\left(\begin{array}{cc}
                                0 & it \\
                                -it & 0
                              \end{array}
  \right) = \left(\begin{array}{cc}
                                ch(t) & ish(t) \\
                                -ish(t) & ch(t)
                              \end{array}
  \right)
   $$ and
let $A$ be the Lie group corresponding to $\mathfrak{a}$.
Denote by $a(t_{1},t_{2},...,t_{m})$
  the $n\times n$
  matrix which consists of $m$
  $2\times2$ diagonal blocks of the form $exp\left(\begin{array}{cc}
0 & it_{j}\\
-it_{j} & 0
\end{array}\right)$
  , where $j=1,2,...,m$
  if $n=2m$
  is even, and which consists of these blocks and $a_{nn}=1$
  in the last diagonal place if $n=2m+1$
  is odd. For example, if $n=4$
  then $$a(t_{1},t_{2})=\left(\begin{array}{cccc}
\cosh(t_{1}) & i\sinh(t_{1}) & 0 & 0\\
-i\sinh(t_{1}) & \cosh(t_{1}) & 0 & 0\\
0 & 0 & \cosh(t_{2}) & i\sinh(t_{2})\\
0 & 0 & -i\sinh(t_{2}) & \cosh(t_{2})
\end{array}\right).$$
We have $$A=\{ a(t_1,t_2,...,t_m): t_1,t_2, ...,t_m \in \RR \}.$$
Define
$$A^+ = \{ a(t_1,t_2,...,t_m): t_1 \ge t_2 \ge \hdots \ge t_m \ge 0\}. $$
\begin{thm}\label{thm: KAH decomposition}
There is a decomposition $G = K A^+ H$ , that is, every element $g \in G$ can be written as
\begin{equation}\label{eq: KAH}
 g = kah, \text{ where }  k \in K,  a \in A^+, h \in H.
\end{equation}
Moreover,the $a$ part in decomposition (\ref{eq: KAH}) is uniquely determined by $g$.
\end{thm}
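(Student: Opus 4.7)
The plan is to transfer the statement to the action of $K$ on $X_n$, then reduce existence to a spectral theorem for positive definite Hermitian matrices. Under the identification $gH \leftrightarrow g\bar g^{-1}$ of $G/H$ with $X_n$, left multiplication by $k\in K$ becomes $k\cdot x = kx\bar k^{-1} = kxk^T$, where the last equality uses $\bar k^{-1} = k^T$ for unitary $k$. A direct $2\times 2$ block computation shows $a\bar a = I$ for every $a\in A$, so $a\mapsto a\bar a^{-1} = a^2$ is a bijection of $A^+$ onto itself. Thus $G = KA^+H$ is equivalent to the claim that every $K$-orbit in $X_n$ under the twisted action $k\cdot x = kxk^T$ contains exactly one element of $A^+$.

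For existence, form the positive definite Hermitian matrix $y := xx^*\in \MnC$. The identity $k^T\bar k = I$ (valid for $k\in K$) gives $(k\cdot x)(k\cdot x)^* = kyk^{-1}$, so $K$ acts on $y$ by ordinary conjugation. By the spectral theorem one may arrange, after a $K$-move, that $y = \mathrm{diag}(\mu_1,\ldots,\mu_n)$ with $\mu_1\ge\cdots\ge\mu_n>0$. Combining $x\bar x = I$ with $xx^* = y$ yields $yxy = x$, i.e.\ $x_{ij}(\mu_i\mu_j - 1) = 0$. Since $x$ is invertible, the multiset $\{\mu_i\}$ must be closed under inversion, forcing $\mu_i\mu_{n+1-i} = 1$. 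Generically $x$ is then supported on the anti-diagonal $j = n+1-i$; the residual diagonal torus of $K$ normalizes the phases of those entries, and a final unitary change of basis pairing the indices $(i,n+1-i)$ into consecutive $(2j-1,2j)$-blocks and diagonalizing each block by the explicit model for $a(t)^2$ (computed using the eigenvectors of $\begin{pmatrix}\cosh 2t & i\sinh 2t\\ -i\sinh 2t & \cosh 2t\end{pmatrix}$) produces $x = v a^2 v^T$ for some $v\in K$ and some $a\in A^+$.

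Uniqueness of $a\in A^+$ is the easy direction. If $a_1^2 = k a_2^2 k^T$ for some $k\in K$, then using $k^T\bar k = I$ together with Hermiticity of $a_j^2$ one gets $a_1^4 = (a_1^2)(a_1^2)^* = k a_2^4 k^{-1}$, so $a_1^4$ and $a_2^4$ are $U(n)$-conjugate and share a spectrum. Since the eigenvalues of $a(t_1,\ldots,t_m)^4$ are $\{e^{\pm 4t_j}\}_{j=1}^m$ (with an extra $1$ for odd $n$) and the $t_j\ge 0$ are listed in weakly decreasing order, the sequence $(t_1,\ldots,t_m)$ is determined, and so is $a_1$.

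The main obstacle is the degenerate case in which some $\mu_i$ has multiplicity higher than the inversion pairing forces: the stabilizer of $y$ inside $K$ is then strictly larger than the diagonal torus, and the phase-normalization step requires additional care. One can treat this by a perturbation/continuity argument from the generic stratum, or bypass the direct analysis altogether by invoking the generalized Cartan decomposition for reductive symmetric pairs: with the commuting involutions $\sigma(g)=\bar g$ and $\theta(g)=(g^*)^{-1}$, the subspace $\mathfrak{a}$ defined in the text is maximal abelian in $\mathfrak{q}\cap\mathfrak{p}$, and the general theorem immediately gives $G = K\exp(\overline{\mathfrak{a}^+})H$ with the desired uniqueness. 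An explicit exponential computation then identifies $\exp(\mathfrak{a}) = A$ and $\exp(\overline{\mathfrak{a}^+}) = A^+$.
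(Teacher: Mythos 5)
Your approach is genuinely different from the paper's and is, modulo one acknowledged gap, essentially correct. The paper works with $g^*g$ and the left $H$-action $h\cdot x = {}^{t}hxh$: it writes $g^*g = x+iy$ with $x$ real symmetric positive definite and $y$ real antisymmetric, first finds $h\in H$ with ${}^{t}hxh = I$, and then brings ${}^{t}hyh$ to its standard $2\times 2$ block normal form by a real orthogonal matrix. Since the normal form of a real antisymmetric matrix is valid without any genericity hypothesis, the paper's existence proof has no degenerate case to worry about. You instead work on $X_n$ with the twisted $K$-action $k\cdot x = kxk^T$, form $y = xx^*$, diagonalize it by the spectral theorem, and exploit the identity $x_{ij}(\mu_i\mu_j-1)=0$ (your derivation via $yxy=x$ is correct once $y$ is diagonal: from $x\bar x = I$ one gets $x^* = (x^T)^{-1}$, hence $x = y\,x^T$, and transposing gives $x^T = x\,y$, so $x = y\,x\,y$). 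The generic computation, including the phase normalization and the unitary block move sending $\begin{pmatrix}0&c\\c^{-1}&0\end{pmatrix}$ to $a(2s)$ with $c = e^{2s}$, checks out. Your uniqueness argument, passing through $a_1^4 = ka_2^4k^{-1}$, is clean and actually a bit slicker than the paper's, which compares $H$-orbits of $I+i\,\mathrm{Im}(a)$.

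The real gap is exactly where you flagged it: when the $\mu_i$ have multiplicity higher than what the pairing $\mu_i\mu_{n+1-i}=1$ forces, $x$ need not be supported on the anti-diagonal, the stabilizer of $\mathrm{diag}(\mu_i)$ in $K$ is a product of unitary groups rather than a torus, and a further argument is needed (for the $\mu_i=1$ block one in effect needs the Autonne--Takagi factorization $uxu^T = I$ for a symmetric unitary $x$; for repeated $\mu_i\neq 1$ one needs to further reduce the $U(k)\times U(k)$ action on the corresponding off-diagonal block of $x$). Neither of your proposed fixes is worked out: the perturbation argument requires showing that the ``$a$-part map'' $G\to A^+$ is continuous and proper (the uniqueness you proved helps, but the density and limit argument still has to be carried through), while invoking the general $K\exp(\overline{\mathfrak{a}^+})H$ theorem for reductive symmetric pairs is legitimate but replaces the direct proof by a citation. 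If you want a self-contained argument without the generic-stratum reduction, the paper's route through $g^*g$ and the antisymmetric normal form is the cleaner way to avoid the degeneracy issue entirely.
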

\begin{rem}
By taking transpose on \eqref{eq: KAH} we obtain a similar decomposition
$G = HA^+K$, that is, every $g \in G$ can be written as
\begin{equation}\label{eq: HAK}
 g = hak, \text{ where }  h \in H,  a \in A^+, k \in K,
\end{equation}
and $a \in A^{+}$ in this decomposition is uniquely determined by $g$. Actually, after taking transpose on \eqref{eq: KAH}, we
obtain that $a^t \in A$ and in general $a^t \notin A^+$. But the permutation group $S_n$ is naturally contained in both
$K$ and $H$ and we can replace $a \in A$ with $a' = w_1aw_2$ such that $a' \in A^+$.
\end{rem}
\begin{proof}[Proof of Theorem \ref{thm: KAH decomposition}]
To prove the existence part we will show that $G = KAH$. Since permutation matrices are clearly in
$H \cap K$, the equality $G = KA^+H$ will easily follow from the equality $G=KAH$.
Let $g \in G$. Our goal is to achieve a decomposition
$g =k a h$ with $ h \in H$, $ a \in A$, and $k \in K$. Suppose that $g$ is of such form. Then, since
$h^* = \tp h, a^* = a,$ and $k^* = k^{-1}$ we get
\begin{equation}\label{eq:Cartan decomp}
g^* g = \tp h a^2 h, h\in H, a \in A.
\end{equation}
On the other hand, suppose that every matrix of the form
$g^* g$ can be written as (\ref{eq:Cartan decomp}). Then write
$ g = ((g^*)^{-1} \ ^t  h a) a h$. Let's show that $k = (g^*)^{-1} \ ^t h a$ is a unitary matrix.
Indeed, $$k^* k = a^* (^t h)^* ((g^*)^{-1})^* (g^*)^{-1} \ ^t h a = a h (g^* g)^{-1}\ ^t h a = a h (^t h a^2h)^{-1} \ ^t h a =I.$$
Therefore, to prove the existence part in the theorem,
it is enough to prove that every matrix of the form $g^*g$ can be written
in the form (\ref{eq:Cartan decomp}).
For this purpose write $g^* g = x + i y, x,y \in H$. Then $x= \hspace{1mm} ^t x$ is symmetric, and $y= -\tp y$ is antisymmetric.
Also, $^t v g^* g v > 0$ for every $0\neq v \in \RR^n$. Hence,
$x$ is a positive definite matrix, that is, $^tv x (^t x) v  = \tp v g^* g v >0 $ for every $0 \neq v \in \RR^n$.
Thus, there is
a matrix $h \in H$ such that $^t h x h = I$. Then $^t h g^* g h = I + i (^th yh)$. The matrix $z := \tp hyh$ is
antisymmetric and it is a standard fact in linear algebra that it is diagonalizable by a real orthogonal matrix.
Consequently, $h' z h'^{-1} = d$, with $d$ consisting of $m = \lfloor n/2 \rfloor$ $2 \times 2$ blocks of the
form $$\left(
        \begin{array}{cc}
          d_{2i-1,2i-1} & d_{2i-1,2i} \\
          d_{2i,2i-1} & d_{2i,2i} \\
        \end{array}
      \right) =
\left(
        \begin{array}{cc}
          0 & \lambda_i \\
          -\lambda_i & 0 \\
        \end{array}
      \right)
 $$  in the case $n$ even, and $m$ such blocks and the last zero row in the case $n$ odd.
 Note also that the numbers $\lambda_i$ are uniquely determined up to a permutation by
 the matrix $gg^*$ since they are eigenvalues of
 $hy ^th$.
 Clearly, every block $2 \times 2$ of the form $\left(
        \begin{array}{cc}
          1 & i \lambda \\
          -i \lambda & 1 \\
        \end{array}
      \right) $ can be transformed by a diagonal matrix $
      \left(
        \begin{array}{cc}
          d_1 & 0 \\
          0  & d_2 \\
        \end{array}
      \right)
       $ to the form
 $$\left(
 \begin{array}{cc}
          ch(\mu) & i sh(\mu) \\
          -i sh(\mu) & ch(\mu) \\
        \end{array}
      \right) = exp\left( \mu \left(\begin{array}{cc}
          0 & i  \\
          -i  & 0 \\
        \end{array} \right) \right)
 $$
 Taking in every block $a$ of the form
 $exp\left( \frac{\mu}{2} \left(\begin{array}{cc}
          0 & i  \\
          -i  & 0 \\
        \end{array} \right) \right) $
proves the existence of the decomposition $g^*g = \tp h a^2 h$ and thus
establishes the existence of the decomposition $g = k a h$.\\
Uniqueness:
note that $H$ acts on the space of positive definite matrices
of the form $g^*g$ by $h \cdot x := h^t x h$. Let us take $h,b,c \in H$ and suppose
$h \cdot(I + ib) = I + ic$. Then $h$ is an orthogonal matrix, $^th h = I$, thus
$c = h^{-1}bh$. In particular, the eigevenvalues of $b$ and $c$ are equal.
Now, to prove the uniqueness of $a \in A^+$
in the decomposition (\ref{eq: KAH}) let us write
$a = Re(a) + i Im(a)$ and note that $H \cdot a = H \cdot (I + i Im(a))$.
Since eigenvalues of $iIm(a)$ are $\pm \sinh(\lambda_1), ...,\pm \sinh(\lambda_n)$ we
see that if $a_1,a_2 \in A^+$ and $a_1 \neq a_2$, then
$H \cdot (I + i Im(a_1)) \neq H \cdot (I + iIm(a_2)),$ and therefore
$H \cdot a_1 \neq H \cdot a_2$. It follows that the $a^2 \in A^+$ part in $g^*g = \tp h a^2 h$ is uniquely
determined by $g$. As a result, $a \in A^+$ is uniquely determined by $g$.
\end{proof}

\end{section} 
%
\begin{section}{Proof of Theorem \ref{thm:induced representation}}
In this paragraph $n$ is fixed, $G=G_{n}(\CC),H=G_{n}(\RR),$ and
$B=B_{n}(\CC)$. We denote by $M$ the standard maximal torus in $G$
and by $W_{2}=W_{n,2}$ the set of involutions in $S_{n}$. As a starting
point of the proof, observe that
\[
I(\chi)^{*}=\cS^{*}(G)^{B,\chi\delta_{0}^{-\frac{1}{2}}},
\]
where $B$ acts on the space of tempered distributions $\cS^{*}(G)$
from the left. We have
\[
\Hom_{H}(I(\chi),\CC)=\cS^{*}(G/H)^{B,\chi\delta_{0}^{-\frac{1}{2}}}.
\]
We will stratify $X:=G/H$ by $B$-orbits. By Lemma \ref{orbits classification},
we have $B\backslash X=W_{2}$. Suppose $\Hom_{H}(I(\chi),\CC)\ne 0$.
By Lemma \ref{stratification} there exists an involution $w\in W_{2}$
and $k\ge0$ such that
\[
\cS^{*}\big(B(w),Sym^{k}(CN_{B(w)}^{X})\big)^{B,\chi\delta_{0}^{-\frac{1}{2}}}\ne0.
\]
Note that $B$ acts on $B(w)$ transitively. Note that the stabilizer
of $w$ under the action of $B$ is $B_{w}$ and $\delta_{0}^{\frac{1}{2}}|_{B^{w}}=\delta_{B^{w}}.$
Therefore, by Frobenius reciprocity (Theorem \ref{Frobenius}),
\[
\begin{aligned}\cS^{*}\big(B(w),Sym^{k}(CN_{B(w)}^{X})\big)^{B,\chi\delta_{0}^{-\frac{1}{2}}} & =\cS^{*}\big(\{w\},Sym^{k}(CN_{B(w),w}^{X})\big)^{B^{w},\chi\delta_{0}^{-\frac{1}{2}}\delta_{B^{w}}^{-1}\delta_{0}}\\
 & =\cS^{*}\big(\{w\},Sym^{k}(CN_{B(w),w}^{X})\big)^{B^{w},\chi}\\
 & =\big(Sym^{k}(N_{B(w),w}^{X})\otimes_{\RR}\CC\big)^{B^{w},\chi}.
\end{aligned}
\]
Observe that $M^{w}\subset B^{w}$, hence $\big(Sym^{k}(N_{B(w),w}^{X})\otimes_{\RR}\CC\big)^{B^{w},\chi}\ne0$
implies $$\big(Sym^{k}(N_{B(w),w}^{X})\otimes_{\RR}\CC\big)^{M^{w},\chi}\ne0.$$
Note that
\[
\begin{aligned}M^{w}&=\{t\in M:t^{-1}w\bar{t}=w\}=\{t\in M:t=w\bar{t}w\}  \\
&=\{t=\text{diag}(t_{1},t_{2},...t_{n})\in M:t_{i}=\overline{t_{w(i)}}\text{ for }1\leq i\leq n\}.
\end{aligned}
\]
It will be useful to obtain one more formula for the $M^{w}$. It
is easy to see, by examining case of $1\times1$ and $2\times2$ matrices,
that
\begin{equation}
M^{w}=\left\{ t(w\bar{t}w)a|t\in M,a=\text{diag}(a_{1},a_{2},...,a_{n}),a_{i}=1\text{ if }w(i)\neq i,a_{i}=\pm1\text{ if }w(i)=i\right\} .\label{eq:M^w as image}
\end{equation}
In the next lemma we perform a calculation of the normal space $N_{B(w),w}^{X}$.
Note that this is a finite-dimensional vector space over $\RR$. Since
the group $M^{w}$ preserves the tangent space $T_{w}^{B(w)}$ and
clearly preserves the tangent space $T_{w}^{X}$, there is an action
of $M^{w}$ on the normal space $N_{B(w),w}^{X}$. By taking the scalar
extension $N_{B(w),w}^{X}\otimes\CC$, we get a complex
representation of $M^{w}$. Since $M^{w}$ is abelian, this representation
decomposes into a direct sum of irreducible, one-dimensional representations.
\begin{lem}
\label{lem:normal sp repr}We have
\[
N_{B(w),w}^{X}\otimes_{\RR}\CC=\bigoplus_{\{(i,j)\in I_{w}\}}\alpha_{\delta(i,j)}
\]
as a representation of $M^{w}$.
\end{lem}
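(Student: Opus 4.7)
The plan is to compute everything inside $V := \mathfrak{gl}_n(\CC)$ (treated alternately as complex of dimension $n^2$ or real of dimension $2n^2$) and then carefully complexify. Differentiating the defining equation $x\bar{x} = I$ of $X_n$ at $w$ (using $\bar{w}=w$, $w^2=I$) gives
$$T_w X_n = \{Y \in V : Y = -w\bar{Y}w\},$$
the real fixed-point set of the anti-linear involution $\sigma(Y) = -w\bar{Y}w$ of $V$. Differentiating the orbit map $b \mapsto bw\bar{b}^{-1}$ at $b=e$ gives the subspace
$$T_w B(w) = \{Zw - w\bar{Z} : Z \in \mathfrak{b}\} \subset T_w X_n.$$
Since $\sigma^2 = \mathrm{id}$, one has $V = T_w X_n \oplus i\,T_w X_n$, so the natural $\CC$-linear map $T_w X_n \otimes_{\RR} \CC \to V$, $Y \otimes \lambda \mapsto \lambda Y$, is an isomorphism; and by differentiating $l(t)(w+\varepsilon Y) = w + \varepsilon\,tY\bar{t}^{-1}$, it is $M^w$-equivariant for the action $Y \mapsto tY\bar{t}^{-1}$ of $M^w$ on $V$.

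Next I would identify the image of $T_w B(w) \otimes_{\RR} \CC$ inside $V$. A general complexified element maps to $(Z_1+iZ_2)w - w(\bar{Z}_1+i\bar{Z}_2)$ for $Z_1,Z_2 \in \mathfrak{b}$. Setting $A = Z_1+iZ_2$ and $D = \bar{Z}_1 + i\bar{Z}_2$, the parameters $A,D$ run independently over $\mathfrak{b}$ (solving the linear system and using $\bar{\mathfrak{b}} = \mathfrak{b}$), so
$$(T_w B(w)) \otimes_{\RR} \CC \;\cong\; \{A w - w D : A, D \in \mathfrak{b}\} \;=\; \mathfrak{b} w + w \mathfrak{b} \;\subset\; V.$$
Since $\mathfrak{b}$ is upper triangular, $(Zw)_{ij} = Z_{i,w(j)}$ is supported on $\{(i,j) : i \le w(j)\}$, while $(wZ)_{ij} = Z_{w(i),j}$ is supported on $\{(i,j) : w(i) \le j\}$. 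Hence $\mathfrak{b}w + w\mathfrak{b}$ is spanned by the matrix units in the union of these support sets, and has as a canonical complement the span of $\{E_{a,b} : a > w(b),\ w(a) > b\}$. The substitution $b = w(j)$ puts this complement in bijection with $I_w = \{(i,j) : i>j,\ w(i) > w(j)\}$, labelling the basis as $E_{i,w(j)}$ for $(i,j) \in I_w$.

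Finally, on $E_{a,b}$ the group $M^w$ acts by the character $t \mapsto t_a \bar{t}_b^{-1}$, and the defining relation $t = w\bar{t}w$ of $M^w$ gives $\bar{t}_{w(j)} = t_j$, so on $E_{i,w(j)}$ the character is $t_i/t_j = \alpha_{\delta(i,j)}(t)$, which is the claimed decomposition. The main obstacle is the complexification step: the map $Z \mapsto Zw - w\bar{Z}$ is $\RR$-linear but not $\CC$-linear, so one has to unwind carefully what $\otimes_{\RR}\CC$ does to a real-linear map involving complex conjugation, in order to see that the single real parameter $Z$ is effectively replaced by two independent complex parameters in $\mathfrak{b}$. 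Once this is handled, the remainder is bookkeeping on supports of upper-triangular matrices left- or right-multiplied by the permutation $w$.
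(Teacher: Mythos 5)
Your proof is correct, and it takes a genuinely different route from the paper's. The paper computes an explicit $\RR$-basis of $T_w X$ and of $T_w B(w)$ and then of $N_{B(w),w}^X$, distinguishing the cases $(i,j)=(w(i),w(j))$ and $(i,j)<(w(i),w(j))$ (yielding respectively $\mathrm{Span}_\RR\{\sqrt{-1}\,e_{i,w(j)}\}$ and $\mathrm{Span}_\RR\{e_{i,w(j)}-e_{w(i),j},\sqrt{-1}(e_{i,w(j)}+e_{w(i),j})\}$), and only complexifies at the very end. You instead observe up front that $T_w X$ is the fixed-point set of the \emph{anti-linear} involution $\sigma(Y)=-w\bar Yw$ on $\mathfrak{gl}_n(\CC)$, hence a real form, so $T_w X\otimes_\RR\CC\to\mathfrak{gl}_n(\CC)$ is a canonical $M^w$-equivariant isomorphism; under it the whole computation becomes one about $\CC$-subspaces of $\mathfrak{gl}_n(\CC)$, and the key analytic point you rightly flag -- that complexifying the $\RR$-linear map $Z\mapsto Zw-w\bar Z$ decouples the parameter $Z$ into two independent $A,D\in\mathfrak{b}$ giving $Aw-wD$ -- replaces the paper's case analysis with a one-line support computation $\mathfrak{b}w+w\mathfrak{b}=\mathrm{Span}\{E_{ij}: i\le w(j)\text{ or }w(i)\le j\}$. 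The trade-off is that your argument is more conceptual and avoids enumerating real basis vectors, while the paper's explicit basis of $N_{B(w),w}^X$ as a \emph{real} $M^w$-module is a by-product one could conceivably want elsewhere; for the purpose of the lemma itself your version is tighter and, since you correctly identify the bijection $(a,b)\leftrightarrow(i,j)=(a,w(b))$ and the character $t\mapsto t_a\bar t_b^{-1}=t_i/t_j$ on $E_{i,w(j)}$, it lands on exactly the claimed decomposition.
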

Before proving this lemma, we give the following corollary.
\begin{cor}
We have
\[
Sym(N_{B(w),w}^{X}\otimes_{\RR}\CC)=\bigoplus_{\kappa:I_{w}\to\ZZ_{\geq0}}\alpha_{\kappa}
\]
as a representation of $M^{w}$.\global\long\def\MnC{\mathrm{Mat}_{n}(\CC)}
\end{cor}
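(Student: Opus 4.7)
The corollary is a direct formal consequence of the preceding lemma together with the standard behavior of the symmetric algebra on a direct sum of lines. The plan is to unwind the decomposition from Lemma \ref{lem:normal sp repr} through the symmetric algebra functor, using that $M^w$ is abelian so all weight-one summands are already one-dimensional.

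More concretely, set $N := N_{B(w),w}^{X}\otimes_{\RR}\CC$. By the lemma, we have an $M^{w}$-equivariant decomposition into lines
\[
N = \bigoplus_{(i,j)\in I_{w}} L_{(i,j)},
\]
where $M^{w}$ acts on $L_{(i,j)}$ via the character $\alpha_{\delta(i,j)}$. Here $\delta(i,j)$ denotes the function in $I_{w} \to \ZZ_{\geq 0}$ taking value $1$ at $(i,j)$ and $0$ elsewhere; under the identification made before the lemma, $\alpha_{\delta(i,j)}$ is the character $\operatorname{diag}(t_{1},\dots,t_{n}) \mapsto t_{i}/t_{j}$.

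Next I would apply the standard isomorphism of symmetric algebras on a direct sum,
\[
Sym(N) \;=\; \bigotimes_{(i,j)\in I_{w}} Sym\bigl(L_{(i,j)}\bigr),
\]
which is $M^{w}$-equivariant (this is where it matters that we are only in the abelian category of representations of the abelian group $M^{w}$, so there is no subtlety with the tensor product of representations). For each line $L_{(i,j)}$, the $k$-th symmetric power is one-dimensional and carries the character $\alpha_{\delta(i,j)}^{k} = \alpha_{k\,\delta(i,j)}$, so
\[
Sym\bigl(L_{(i,j)}\bigr) \;=\; \bigoplus_{k\geq 0} \alpha_{k\,\delta(i,j)}.
\]
Distributing the tensor product over the direct sums and using that the tensor product of characters $\alpha_{\kappa'}\otimes\alpha_{\kappa''}$ equals $\alpha_{\kappa'+\kappa''}$ on $M^{w}$ (read off from the defining formula of $\alpha_{\kappa}$), each multi-index $\kappa : I_{w} \to \ZZ_{\geq 0}$ contributes exactly the character $\alpha_{\kappa} = \bigotimes_{(i,j)} \alpha_{\kappa(i,j)\,\delta(i,j)}$, which yields the claimed decomposition.

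There is no real obstacle; the only point that requires any care is making sure the formal symmetric-algebra identity $Sym(\bigoplus L_{(i,j)}) \cong \bigotimes Sym(L_{(i,j)})$ is invoked as an identity of $M^{w}$-representations, not merely of vector spaces. This is automatic because the decomposition of $N$ is itself $M^{w}$-equivariant, and then the sum over $\kappa$ in the statement is just a bookkeeping of the exponents appearing in the multinomial expansion.
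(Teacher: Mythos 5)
Your proof is correct. The paper does not actually supply a proof of this corollary; it is stated immediately after Lemma \ref{lem:normal sp repr} and treated as an automatic consequence, and your argument is precisely the standard unwinding that makes the word ``corollary'' justified: decompose $N$ into lines via the lemma, use the $M^{w}$-equivariant isomorphism $Sym(\bigoplus L_{(i,j)})\cong\bigotimes Sym(L_{(i,j)})$ over the finite index set $I_{w}$, note $Sym(L_{(i,j)})=\bigoplus_{k\ge0}\alpha_{k\,\delta(i,j)}$, and distribute, using $\alpha_{\kappa'}\otimes\alpha_{\kappa''}=\alpha_{\kappa'+\kappa''}$ which is immediate from the defining formula for $\alpha_{\kappa}$. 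Your remark that the isomorphism must be read equivariantly is the only point requiring any care, and you handle it correctly since $M^{w}$ is abelian and the line decomposition from the lemma is already $M^{w}$-stable.
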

\begin{proof}[Proof of Lemma \ref{lem:normal sp repr}] Let us denote by
$e_{i,j}$ the elementary matrix with $1$ at the $(i,j)$-th entry
and zeros in all other entries. The tangent space of $X$ at $w$
is equal to
\[
\begin{split}T_{w}^{X} & =\{A\in\MnC|Aw+w\bar{A}=0\}=\{A\in\MnC|wAw=-\bar{A}\}\\
&=\text{Span}_{\RR}\{-e_{i,j}+e_{w(i),w(j)},\sqrt{-1}(e_{i,j}+e_{w(i),w(j)})|1\leq i,j\leq n\}.
\end{split}
\]
On the other hand,
\[
T_{w}^{B(w)}=\{-Aw+w\bar{A}|A\in\MnC,\, A\text{ is upper triangular}\}
\]
Since $e_{i,j}w=e_{i,w(j)}$ and $we_{i,j}=e_{w(i),j}$, we obtain
that
\[
\begin{split}T_{w}^{B(w)} & =\text{Span}_{\RR}\{-e_{i,w(j)}+e_{w(i),j},\sqrt{-1}(e_{i,w(j)}+e_{w(i),j})|1\leq i\leq j\leq n\}\\
 & =\text{Span}_{\RR}\{-e_{i,j}+e_{w(i),w(j)},\sqrt{-1}(e_{i,j}+e_{w(i),w(j)})|\text{ }i\leq w(j)\}\\
 & =\text{Span}_{\CC}\{e_{i,j},e_{w(i),w(j)}|\text{ }i\leq w(j)\}\cap T_{w}^{X}\\
 & =\text{Span}_{\CC}\{e_{i,j}|\text{ }i\leq w(j)\text{ or }j\geq w(i)\}\cap T_{w}^{X}.
\end{split}
\]
Hence
\begin{equation}
\begin{split}N_{B(w),w}^{X} & \cong\text{Span}_{\CC}\{e_{i,j}|\text{ }i>w(j),w(i)>j\}\cap T_{w}^{X}\\
 & =\text{Span}_{\CC}\{e_{i,w(j)}|\text{ }i>j,w(i)>w(j)\}\cap T_{w}^{X}\\
 & =\text{Span}_{\CC}\{e_{i,w(j)}|\text{ }(i,j)\in I_{w}\}\cap T_{w}^{X}.
\end{split}
\label{eq:normal space complex}
\end{equation}
Let us denote $V=\text{Span}_{\CC}\{e_{i,w(j)}|\text{ }(i,j)\in I_{w}\}$.
Note that if $e_{i,w(j)}\in V$ then also $e_{w(i),j}\in V,$ since
$w$ is an involution and for an involution
\[
(i,j)\in I_{w}\iff(w(i),w(j))\in I_{w}.
\]
Let us use the lexicographic ordering on pairs $(i,j)$ : write $(i,j)<(i',j')$
if $i<i'$ or ($i=i'$ and $j<j'$) . Then we may rewrite (\ref{eq:normal space complex})
as
\begin{equation}
\begin{split}N_{B(w),w}^{X} & \cong\text{Span}_{\RR}\{\sqrt{-1}e_{i,w(j)}|\text{ }(i,j)\in I_{w},(i,j)=(w(i),w(j))\}\\
 & \oplus\text{Span}_{\RR}\{e_{i,w(j)}-e_{w(i),j},\sqrt{-1}(e_{i,w(j)}+e_{w(i),j})|\text{ }(i,j)\in I_{w},(i,j)<(w(i),w(j))\}.
\end{split}
\label{eq: normal sp real}
\end{equation}
For $t=\text{diag}(t_{1},...,t_{n})\in M$ we have
\[
te_{i,j}\bar{t}^{-1}=(t_{i}/\bar{t_{j}})e_{i,j},
\]
and for $t\in M^{w}$ we also have
\[
te_{i,w(j)}\bar{t}^{-1}=(t_{i}/\overline{t_{w(j)}})e_{i,w(j)}=(t_{i}/t_{j})e_{i,w(j)}.
\]
Therefore the action of $M^{w}$ on $e_{i,w(j)}$ is given by $\alpha_{\delta(i,j)}$.
We obtain that $N_{B(w),w}^{X}\otimes_{\RR}\CC$, as a representation
of $M^{w}$ is
\[
\begin{split}\begin{split}N_{B(w),w}^{X}\otimes_{\RR}\CC\end{split}
 & \cong\bigoplus_{\{(i,j)\in I_{w}:(i,j)=(w(i),w(j))\}}\alpha_{\delta(i,j)}\oplus\bigoplus_{\{(i,j)\in I_{w}:(i,j)<(w(i),w(j))\}}(\alpha_{\delta(i,j)}\oplus\alpha_{\delta(w(i),w(j))})\\
 & = \bigoplus_{\{(i,j)\in I_{w}:(i,j)=(w(i),w(j))\}}\alpha_{\delta(i,j)}\oplus\\
 &  \bigoplus_{\{(i,j)\in I_{w}:(i,j)<(w(i),w(j))\}}\alpha_{\delta(i,j)}\oplus\bigoplus_{\{(i,j)\in I_{w}:(i,j)>(w(i),w(j))\}}\alpha_{\delta(i,j)}\\
 & =\bigoplus_{\{(i,j)\in I_{w}\}}\alpha_{\delta(i,j)}.
\end{split}
\]
The last equality is exactly the assertion of the lemma.
\end{proof}
\begin{lem}
\label{lem:dim est} If $\left(Sym^{k}(N_{B(w),w}^{X})\otimes_{\RR}\CC\right)^{M^{w},\chi}\ne0$
then $k=0$, $w(\chi)=\bar{\chi}^{-1}$, and $\chi_{i}(-1)=1$ for
all $1\le i\le n$ such that $w(i)=i.$
\end{lem}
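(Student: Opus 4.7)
The plan is to use the preceding corollary to translate the hypothesis into a character equation, and then squeeze $\kappa$ between two inequalities, one forced by the monotonicity $\lambda_1 \geq \cdots \geq \lambda_n$ and one forced by the positivity of $\kappa$. By the corollary, the hypothesis is equivalent to the existence of $\kappa : I_w \to \ZZ_{\geq 0}$ with $|\kappa| := \sum_{(i,j) \in I_w} \kappa(i,j) = k$ for which $\alpha_\kappa|_{M^w} = \chi|_{M^w}$. Expanding $\alpha_\kappa(\mathrm{diag}(t_1,\ldots,t_n)) = \prod_p t_p^{c_p}$, where $c_p = \sum_{j:(p,j)\in I_w} \kappa(p,j) - \sum_{i:(i,p)\in I_w}\kappa(i,p) \in \ZZ$, and parameterizing $\chi_i(z) = |z|^{\lambda_i + i\mu_i}(z/|z|)^{m_i}$, I would test the character equation on the three natural one-parameter families inside $M^w$: the real-positive direction on each orbit, the angular direction on each $2$-orbit, and the sign direction on each fixed point.

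The real-positive test gives the essential orbit identity $\sum_{i \in O}\lambda_i = \sum_{i \in O} c_i$ for every $w$-orbit $O$ (and similarly $\sum_{i \in O}\mu_i = 0$), while the angular and sign tests produce $m_i - m_{w(i)} = c_i - c_{w(i)}$ on $2$-orbits and $m_i \equiv c_i \pmod 2$ at fixed points. The central trick is then to introduce the orbit-constant weight
\[ \omega_i := (n+1) - i - w(i), \]
for which a direct verification yields three properties: (a) $\omega_{w(i)} = \omega_i$, so $\omega$ is constant on $w$-orbits; (b) $\omega_j - \omega_i = (i-j) + (w(i)-w(j)) > 0$ strictly whenever $(i,j) \in I_w$, directly from the defining conditions of $I_w$; and (c) $\sum_i \omega_i = 0$, while the partial sums $W_k := \sum_{i \leq k} \omega_i = k(n+1) - \tfrac{k(k+1)}{2} - \sum_{i \leq k} w(i)$ are non-negative, since for any permutation $\sum_{i \leq k}w(i) \leq \tfrac{k(2n-k+1)}{2}$.

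With $\omega$ in place I would carry out a two-sided estimate of $\Psi := \sum_i \omega_i \lambda_i$. Abel summation together with (c) gives $\Psi = \sum_{k=1}^{n-1} W_k (\lambda_k - \lambda_{k+1}) \geq 0$, the $\lambda_n$-contribution being killed by $\sum_i \omega_i = 0$. On the other hand, (a) combined with the orbit identity rewrites $\Psi = \sum_O \omega_O \sum_{i \in O} \lambda_i = \sum_O \omega_O \sum_{i \in O} c_i = \sum_p \omega_p c_p$, which in turn equals $\sum_{(i,j) \in I_w} \kappa(i,j)(\omega_i - \omega_j) \leq 0$ by (b) and $\kappa \geq 0$. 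Squeezing forces $\Psi = 0$, and the strictness in (b) forces every $\kappa(i,j)$ to vanish, so $\kappa \equiv 0$ and in particular $k = |\kappa| = 0$.

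Once $\kappa = 0$ we have $\alpha_\kappa = 1$, and the character equation becomes $\chi|_{M^w} = 1$. Evaluating on $(t_i, t_{w(i)}) = (z, \bar z)$ in a $2$-orbit $\{i, w(i)\}$ gives $\chi_{w(i)} = \bar{\chi_i}^{-1}$, while evaluating on the $\RR^{\times}$-factor attached to a fixed point $i$ gives $\chi_i|_{\RR^{\times}} = 1$ and in particular $\chi_i(-1) = 1$. Together these yield $w(\chi) = \bar{\chi}^{-1}$ along with the fixed-point parity, completing the proof. The one non-obvious step is locating the weight $\omega$: it must be simultaneously orbit-constant, pair strictly negatively against every $(i,j) \in I_w$, and have non-negative partial sums, and the affine choice $\omega_i = (n+1) - i - w(i)$ is the combinatorial object that achieves all three at once.
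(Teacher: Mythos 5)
Your argument is correct, and step 3 of it (forcing $\kappa \equiv 0$) takes a genuinely different route from the paper's. Both proofs begin in the same way: the hypothesis is translated, via the preceding corollary, into the existence of $\kappa : I_w \to \ZZ_{\geq 0}$ with $\alpha_\kappa|_{M^w} = \chi|_{M^w}$, and from this one extracts the orbit equations $\lambda_i + \lambda_{w(i)} = c_i + c_{w(i)}$. (The paper gets them by pushing the identity through $t \mapsto t(w\bar t w)$ and taking absolute values; you get them by testing on one-parameter subgroups of $M^w$; the resulting equations are identical.) The divergence is in how these equations are exploited. The paper runs two consecutive minimal-index arguments: first it shows $\lambda_i + \lambda_{w(i)} \le 0$ for every $i$, which combined with $\sum_i\lambda_i = 0$ gives $\lambda_i + \lambda_{w(i)} = 0$; then a second minimal-index argument on the $|t_j|$-exponents forces $\kappa \equiv 0$. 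You instead introduce the orbit-constant weight $\omega_i = (n+1)-i-w(i)$ and squeeze $\Psi := \sum_i\omega_i\lambda_i$ between zero and zero, using Abel summation and the partial-sum bound $W_k \ge 0$, $W_n = 0$ on one side, and the identity $\Psi = \sum_{(i,j)\in I_w}\kappa(i,j)(\omega_i - \omega_j)$ together with the strict inequality $\omega_i - \omega_j < 0$ on $I_w$ on the other. This collapses the paper's two inductions into a single inequality and kills all of $\kappa$ at once; it also makes the combinatorial content transparent --- what is needed is a functional on the diagonal torus that is constant on $w$-orbits, pairs strictly against $I_w$, and has non-negative partial sums, and $\omega_i = (n+1)-i-w(i)$ is the (essentially unique, up to scale) affine choice achieving all three. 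The trade-off is that locating $\omega$ requires a small flash of insight, while the paper's double minimal-index argument is more mechanical but longer. The final step (deducing $w(\chi) = \bar\chi^{-1}$ and $\chi_i(-1)=1$ from $\chi|_{M^w}=1$) is the same in both.
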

\begin{proof}
Note that for $t\in M$ and $w\in W_{2}$, the element $wtw$ is also
diagonal and its diagonal entries are the permutation of diagonal
entries of $t$ by $w$, i.e.,
\[
(wtw)_{ii}=t_{w(i),w(i)}.
\]
We are going to use (\ref{eq:M^w as image}). From it follows that
if $\alpha_{\kappa}|_{M^{w}}=\chi|_{M^{w}}$, then for every $t\in M$
we have
\[
\alpha_{\kappa}(t(w\bar{t}w))=\alpha_{\kappa}(t)\overline{w(\alpha_{\kappa})(t)}=\chi(t)\overline{\chi(w(t))}=\chi(t(w\bar{t}w)).
\]
That is,
\begin{equation}
\big(\alpha_{\kappa}|_{M^{w}}=\chi|_{M^{w}}\big)\Rightarrow\big(\overline{\alpha_{\kappa}}w(\alpha_{\kappa})=\bar{\chi}w(\chi)\big).\label{eq:cond S_w}
\end{equation}
To obtain (\ref{eq:cond S_w}) just put $a=1$ in (\ref{eq:M^w as image}).
The set of $\kappa$-s that satisfy
\begin{equation}
\overline{\alpha_{\kappa}}w(\alpha_{\kappa})=\bar{\chi}w(\chi)\label{eq:kappa_eq}
\end{equation}
is $\{\kappa\equiv0\}$ if $w(\chi)=\overline{\chi^{-1}}$ and is
empty otherwise. Indeed, let us take absolute values on two sides
of (\ref{eq:kappa_eq}). We obtain
\begin{equation}
\prod_{(i,j)\in I_{w}}\big|\frac{t_{i}}{t_{j}}\big|^{\kappa(i,j)+\kappa(w(i),w(j))}=\prod_{i=1}^{n}|t_{i}|^{\lambda_{i}+\lambda_{w(i)}}.\label{eq:abs_kappa}
\end{equation}
First, we will deduce from the last equation that the right hand-side
of this equation is $1$. Note that from (\ref{eq:abs_kappa}) follows,
by substituting $t_{i}=c$ for all $i$ with a generic $c\in\CC^{*}$,
that
\[
\lambda_{1}+...+\lambda_{n}=0.
\]
Since no pair $(1,i)$ belongs to $I_{w}$, it follows that $\lambda_{1}+\lambda_{w(1)}\leq0$.
Let $i$ be the first index such that $\lambda_{i}+\lambda_{w(i)}>0$.
Then on the left hand side of (\ref{eq:abs_kappa}) the power of $|t_{i}|$
is positive, thus there is a $j$ such that $(i,j)\in I_{w}$. Hence
$i>j,w(i)>w(j)$ and from the assumption $\lambda_{1}\ge\lambda_{2}\ge...\ge\lambda_{n}$
we obtain $\lambda_{i}\leq\lambda_{j}$ and $\lambda_{w(i)}\leq\lambda_{w(j)}$.
Thus
\[
0<\lambda_{i}+\lambda_{w(i)}\leq\lambda_{j}+\lambda_{w(j)}\leq0,
\]
a contradiction! Therefore, for every $i$, there is an inequality
$\lambda_{i}+\lambda_{w(i)}\leq0$. Since the sum of all $\lambda$'s
is equal to $0$, we obtain $\lambda_{i}+\lambda_{w(i)}=0$ for every
$i$. Hence, $\lambda_{1}\geq0,\lambda_{n}\leq0$.\\
Now, we can deduce $\kappa\equiv0$. Indeed, let $j$ be the minimal
index such that there exists $(i,j)\in I_{w}$ with the property
\[
\kappa(i,j)\neq0\text{ or }\kappa(w(i),w(j))\neq0.
\]
The power of $|t_{j}|$ on the right hand side of (\ref{eq:kappa_eq})
must equal $0$, thus there is a pair $(j,k)\in I_{w}$ such that
$\kappa(j,k)\ne0$ or $\kappa(w(j),w(k))\ne0.$ In both cases we obtain
a contradiction to the minimality of $j$. As a conclusion, we obtain
that (\ref{eq:kappa_eq}) implies $\kappa\equiv0$. \\
Suppose now $w(\chi)=\overline{\chi^{-1}}$ and thus $\kappa\equiv0$.
Then $\alpha_{\kappa}=1$, the identity character. We want to prove
that $\chi_{i}(-1)=1$ for all $i$ such that $w(i)=i$. This follows
from $\chi(a)=\alpha_{\kappa}(a)=1$ for $a=diag(a_{1},...,a_{n})$,
where $a_{i}=\pm1$ whenever $w(i)=i$ and $a_{i}=1$ otherwise.
\end{proof}
 \end{section}
%
\begin{section}{Calculation of Rankin-Selberg Gamma factors}\label{section: Rankin-Selberg}
In this section we recall the notion of Rankin-Selberg integrals and
apply the results of previous sections to calculate special values
of Rankin-Selberg gamma factors. The exposition and notation follows
\cite{Jacquet}. Let $\chi:B_{n}\to\CC^{\times}$
be a multilicative character and let $\lambda:I(\chi)\to\CC$ be a
$\psi$-form on $I(\chi)$. Recall that such $\lambda$ always exists
and it is unique up to a scalar multiple. If $v\in V$ and $f\in V$,
$g\in G_{n}$, we set
\[
W_{f}(g)=\lambda(R(g)f).
\]
Let $\cW(I(\chi),\psi)$ be the space spanned by the functions of
the form $W_{f}$, where $f\in V$. \\
For every $n$, we denote by $w_{n}$ the $n\times n$ permutation
matrix whose anti diagonal entries are $1$. If $n>n',$ we denote
\[
w_{n,n'}=\left(\begin{array}{cc}
1_{n'} & 0\\
0 & w_{n-n'}
\end{array}\right).
\]
 \\
If $f\in I(\chi)$, then the function $\tilde{f}$ is defined by
\[
\tilde{f}(g):=f(w_{n}\hspace{1mm}^{t}g^{-1}).
\]
Let $\pi$ be an irreducible representation of $G_{n}(\CC)$ and let
$\pi'$ be an irreducible representation of $G_{m}(\CC)$. Suppose
$\pi$ is the Langlands quotient of $I(\chi)$ and $\pi'$ is the
Langlands quotient of $I(\chi')$. We choose a $\psi$-form $\lambda$
on $I(\chi)$ and a $\bar{\psi}$-form $\lambda'$ on $I(\chi')$.
Rankin-Selberg integrals are defined as follows. For $f\in I(\chi),f'\in I(\chi')$,
set
\[
W=W_{f},\, W'=W_{f'}.
\]
For $W=W_{f}$, set
\[
\widetilde{W}_{f}:=W_{f}(w_{n}\hspace{1mm}^{t}g^{-1}).
\]
Note that $\widetilde{W}_{f}(g)=W_{\tilde{f}}(g)$ and $W_{\tilde{f}}(g)\in\cW(I(\chi^{-1}),\bar{\psi})$.\\
If $n>n'$ we set
\begin{equation}
\Psi(s,W,W')=\int W\left(\begin{array}{cc}
g & 0\\
0 & 1_{n-n'}
\end{array}\right)W'(g)|\det g|_{\CC}^{s-\frac{n-n'}{2}}dg.\label{eq:RS1}
\end{equation}
In addition, for $0\le j\le n-n'-1$, we set
\begin{equation}
\Psi_{j}(s,W,W')=\int W\left(\begin{array}{ccc}
g & 0 & 0\\
X & 1_{j} & 0\\
0 & 0 & 1_{n-n'-j}
\end{array}\right)W'(g)|\det g|_{\CC}^{s-\frac{n-n'}{2}}dXdg.\label{eq:RS2}
\end{equation}
Here $X$ is integrated over the space $M(m\times j,\CC)$ of matrices
with $m$ rows and $j$ columns. In each integral,$g$ is integrated
over the quotient $U_{n}(\CC)\backslash G_{n}(\CC).$\\
If $n=n'$, we let $\Phi$ be a Schwartz function on $\CC^{n}$ and
we set
\begin{equation}
\Psi(s,W,W',\Phi)=\int W(g)W'(g)\Phi[(0,0,...,0,1)g]|\det g|_{\CC}^{s}dg.\label{eq:RS3}
\end{equation}
Rankin-Selberg gamma factor $\gamma(s,\pi\times\pi',\psi)$ is a proportionality
factor appearing in functional equation on certain Rankin-Selberg
integrals. We quote here \cite[Theorem 2.1]{Jacquet}.
\begin{thm}
\label{thm: Jacquet func_eqn}
\begin{enumerate}
\item The integrals (\ref{eq:RS1}),(\ref{eq:RS2}), and (\ref{eq:RS3})
converge for $Re(s)>>0$.
\item Each integral extends to a meromorphic function of $s$ which is a
holomorphic multiple of $L(s,\pi\times\pi')$ bounded at infinity
in vertical strips. See \cite{Jacquet} for the definition of $L(s,\pi\times\pi').$
\item The following functional equations are satisfied. If $n>n'$,
\[
\Psi(1-s,\widetilde{W},\widetilde{W'})=\omega_{I(\chi)}(-1)^{n-1}\omega_{I(\chi')}(-1)\gamma(s,I(\chi)\times I(\chi'),\psi)\Psi(s,W,W').
\]
If $n>n'+1$,
\[
\Psi_{j}(1-s,\pi(w_{n,n'})\widetilde{W},\widetilde{W'})=\omega_{I(\chi)}(-1)^{n'}\omega_{I(\chi')}(-1)\gamma(s,\pi\times\pi',\psi)\Psi_{n-n'-1-j}(s,W,W').
\]
If $n=n'$,
\[
\Psi(1-s,\widetilde{W},\widetilde{W'},\hat{\Phi})=\omega_{I(\chi)}(-1)^{n-1}\gamma(s,\pi\times\pi',\psi)\Psi(s,W,W',\Phi).
\]

\end{enumerate}
\end{thm}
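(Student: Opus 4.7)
The statement is quoted verbatim from \cite[Theorem 2.1]{Jacquet}, so in the context of this paper the ``proof'' is really just a citation. If I were to prove it from scratch, I would follow the classical Rankin--Selberg strategy of Jacquet, Piatetski-Shapiro and Shalika, adapted to the archimedean setting.

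For part (1), I would bound $|W(g)|$ using the archimedean Jacquet--Shalika asymptotic: along the diagonal torus $\mathrm{diag}(t_{1},\ldots,t_{n})$, a Whittaker function $W\in\cW(I(\chi),\psi)$ decays faster than any polynomial in each ratio $|t_{i}/t_{i+1}|$ as it tends to infinity, while its growth in the opposite direction is controlled by the Langlands parameters $\lambda_{i}$. Combined with the Iwasawa decomposition $g=utk$ (with a uniform bound in $k$ coming from the smooth Fr\'echet structure) and the analogous estimates for $W'$ and for the Schwartz factor $\Phi$, this yields absolute convergence of \eqref{eq:RS1}--\eqref{eq:RS3} on a right half-plane $\mathrm{Re}(s)>s_{0}$.

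For part (2), I would apply Bernstein's principle of analytic continuation to the family $\Psi(s,W,W')$, viewed as a holomorphic family in $s$ and in the Langlands parameters of $\chi,\chi'$. The unique factorization of $\Psi$ as $L(s,\pi\times\pi')$ times an entire function bounded in vertical strips is essentially a uniqueness-of-Whittaker statement combined with an explicit computation of the archimedean local $L$-factor on a Zariski-dense set of parameters (e.g.\ unramified), extended by analytic continuation. Uniform moderate-growth bounds on Whittaker functions in the Casselman--Wallach framework then supply the vertical-strip boundedness.

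For part (3), the heart is a local multiplicity-one theorem. Each side of the functional equation is a bilinear form on $\cW(I(\chi),\psi)\times\cW(I(\chi'),\bar\psi)$, tensored with $\cS(\CC^{n})$ when $n=n'$, satisfying a prescribed equivariance under $G_{n'}(\CC)$ acting on both arguments together with the appropriate unipotent/transpose twist dictated by the specific shape of the integral. By the archimedean analogue of Shalika's local multiplicity-one theorem, such bilinear forms form a one-dimensional space generically in $s$, which forces proportionality of the two sides; the proportionality factor, up to the central-character signs, is by definition $\gamma(s,\pi\times\pi',\psi)$. When $n=n'$, the Fourier transform $\Phi\mapsto\hat{\Phi}$ implements the $s\leftrightarrow 1-s$ symmetry on the Schwartz factor. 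The main obstacle is exactly this archimedean multiplicity-one step together with the uniform growth estimates on Whittaker functions: in the $p$-adic case both follow cleanly from Bruhat decomposition, whereas in the archimedean case one must work inside the category of smooth Fr\'echet representations of moderate growth, rule out derivative-type contributions to the relevant space of distributions, and combine Casselman--Wallach theory with Bernstein's continuation principle.
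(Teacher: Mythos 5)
You correctly identify that the paper provides no proof here: it simply quotes \cite[Theorem 2.1]{Jacquet} and proceeds, so the appropriate response is the citation itself, which you give. Your supplementary sketch of how one would prove it from scratch (Jacquet--Shalika Whittaker asymptotics for convergence, Bernstein continuation and Casselman--Wallach bounds for meromorphy and vertical-strip control, and an archimedean local multiplicity-one argument to force the functional equation) is a fair summary of the standard Rankin--Selberg strategy and of the genuine archimedean difficulties, but it is extra material with nothing in the paper to compare against.
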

We will calculate the special values of Rankin-Selberg gamma factor
of $G_{n}(\RR)$-distinguished representations. The main tool will
be the classification of such representations obtained in Theorem
\ref{thm:induced representation} and basic properties of Rankin-Selberg
gamma factors, \cite[Lemma 16.3]{Jacquet}.
\begin{thm}
\label{thm: Rankin-Selberg} Let $\pi$ be an irreducible $G_{t}(\RR)$-distinguished
representation of $G_{t}(\CC)$ and $\pi'$ be an irreducible and
$G_{r}(\RR)$-distinguished representation of $G_{r}(\CC)$. If $\psi$
is a non-trivial character of $\CC$ with a trivial restriction to
$\RR$ then
\[
\gamma\left(\frac{1}{2},\pi\times\pi',\psi\right)=1.
\]

\end{thm}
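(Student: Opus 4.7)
The plan is to combine the Langlands-data characterization of Theorem \ref{thm:induced representation} (applied to both $\pi$ and $\pi'$) with Jacquet's multiplicativity of Rankin--Selberg gamma factors along parabolic induction, so that the claim reduces to an explicit local identity for Tate gamma factors of characters of $\CC^\times$.

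Concretely, realize $\pi$ as the Langlands quotient of $I(\chi)$ with $\chi=(\chi_1,\ldots,\chi_t)$ and $\pi'$ as that of $I(\chi')$ with $\chi'=(\chi'_1,\ldots,\chi'_r)$, and fix involutions $w\in W_{t,2}$ and $w'\in W_{r,2}$ satisfying $w\chi=\overline{(\chi^{-1})}$, $w'\chi'=\overline{((\chi')^{-1})}$, together with $\chi_i(-1)=1$ at each fixed point of $w$ (similarly for $\chi'$). Jacquet's multiplicativity formula \cite[Lemma 16.3]{Jacquet} then yields
\[
\gamma\bigl(\tfrac{1}{2},\pi\times\pi',\psi\bigr)=\prod_{i=1}^{t}\prod_{j=1}^{r}\gamma\bigl(\tfrac{1}{2},\chi_i\chi'_j,\psi\bigr),
\]
each factor being a Tate gamma factor of a character of $\CC^\times$. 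I would then partition the pairs $(i,j)$ into orbits of the involution $(i,j)\mapsto(w(i),w'(j))$, each of size $1$ (when both $i,j$ are fixed) or $2$. In the size-$1$ case both $\chi_i$ and $\chi'_j$ are $\GL_1(\RR)$-distinguished, hence of the form $(z/|z|)^{2m}$, and so is their product. In the size-$2$ case one checks the key relation $\chi_{w(i)}\chi'_{w'(j)}=\overline{(\chi_i\chi'_j)^{-1}}$, which still holds when exactly one of $i,j$ is individually fixed because such a fixed character already satisfies $\chi_i=\overline{\chi_i^{-1}}$.

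The theorem thus reduces to two local identities for a character $\eta$ of $\CC^\times$: (a) $\gamma(\tfrac{1}{2},\eta,\psi)=1$ whenever $\eta$ is $\GL_1(\RR)$-distinguished, and (b) $\gamma(\tfrac{1}{2},\eta,\psi)\,\gamma(\tfrac{1}{2},\overline{\eta^{-1}},\psi)=1$ for arbitrary $\eta$. Both follow by a direct Tate computation: parametrize $\eta(z)=(z/|z|)^{k}|z|^{2s_0}$ and use the Schwartz test function $\bar z^{|k|}e^{-\pi|z|^2}$, whose Fourier transform with respect to the paper's $\psi(x)=e^{\pi(x-\bar x)}$ is $z^{|k|}e^{-\pi|z|^2}$. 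Tate's local functional equation in polar coordinates at $s=\tfrac{1}{2}$ gives
\[
\gamma\bigl(\tfrac{1}{2},\eta,\psi\bigr)=\pi^{2s_0}\,\frac{\Gamma\bigl(\tfrac{1}{2}+\tfrac{|k|}{2}-s_0\bigr)}{\Gamma\bigl(\tfrac{1}{2}+\tfrac{|k|}{2}+s_0\bigr)}.
\]
A $\GL_1(\RR)$-distinguished $\eta$ has $s_0=0$ and even $k$, so the right side is $1$, proving (a); and since $\overline{\eta^{-1}}$ has the same $k$ with $s_0$ replaced by $-s_0$, the product in (b) telescopes to $1$.

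The main technical point is invoking \cite[Lemma 16.3]{Jacquet} at $s=\tfrac{1}{2}$ so that the Rankin--Selberg gamma factor of the Langlands quotient matches that of its full induced module (rather than merely of an irreducible subquotient); once this is granted, the rest of the argument is orbit bookkeeping followed by the explicit Tate computation above. A secondary subtle point is that the value $1$ (as opposed to a sign or a root of unity) is sensitive to the normalization of $\psi$; the paper's choice $\psi(x)=e^{\pi(x-\bar x)}$ is precisely what makes the test function $\bar z^{|k|}e^{-\pi|z|^2}$ self-dual up to the swap $z\leftrightarrow\bar z$, which is what forces the epsilon factor to be $1$ in the distinguished case.
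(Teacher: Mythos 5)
Your proposal is correct and follows essentially the same route as the paper: realize $\pi$ and $\pi'$ as Langlands quotients, apply Jacquet's multiplicativity to reduce to one-dimensional Tate gamma factors, use the involutions from Theorem \ref{thm:induced representation} to partition the index pairs into orbits of $(i,j)\mapsto(w(i),w'(j))$, and evaluate the size-one orbits via the distinguished-character identity and the size-two orbits via the pairing identity at $s=\tfrac12$. The only cosmetic difference is that you verify the two base identities by an explicit Tate computation with a self-dual test function, whereas the paper deduces them abstractly from the functional equation $\gamma(s,\chi,\psi)\gamma(1-s,\overline{\chi^{-1}},\overline{\psi^{-1}})=1$ combined with $\overline{\psi^{-1}}=\psi$ (since $\psi$ is trivial on $\RR$); these are equivalent.
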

A similar theorem is proven in \cite[Theorem 0.1]{O}
for the $p$-adic case.\\
Before proving the theorem, let us recall some basic facts about one-dimensional
Tate gamma factors. Let $\chi$ be a one-dimensional character $\chi:\CC^{*}\to\CC^{*}$.
We have the following functional equation for Tate gamma-factors
\begin{equation}
\gamma(s,\chi,\psi)\gamma(1-s,\overline{\chi^{-1}},\overline{\psi^{-1}})=1.\label{eq:gamma func eq}
\end{equation}
Since we assume $\psi$ is trivial on $\RR$, we obtain $\overline{\psi^{-1}}=\psi$,
and for $s=\frac{1}{2}$ we get
\begin{equation}
\gamma\left(\frac{1}{2},\chi,\psi\right)\gamma\left(\frac{1}{2},\overline{\chi^{-1}},\psi\right)=1.\label{eq:pair of gammas at half}
\end{equation}
For a real character $\chi$, that is, for $\chi$ satisfying $\chi^{2}=1$,
we obtain $\gamma\left(\frac{1}{2},\chi,\psi\right)^{2}=1$, and thus
$\gamma\left(\frac{1}{2},\chi,\psi\right)\in\{1,-1\}$. The value
of $\gamma\left(\frac{1}{2},\chi,\psi\right)$ depends on $\chi(-1)$.
Whenever $\chi(-1)=1$ we obtain
\begin{equation}
\gamma\left(\frac{1}{2},\chi,\psi\right)=1\label{eq:real gamma factor at half}
\end{equation}

\begin{proof}[Proof of Theorem \ref{thm: Rankin-Selberg}]
Recall that if $\pi$ is the Langlands quotient of $Ind(\chi)$ and
$\pi'$ is the Langlands quotient of $Ind(\chi')$, then
\[
\gamma(s,\pi\times\pi',\psi)=\gamma(s,Ind(\chi)\times Ind(\chi'),\psi).
\]
It is well-known that $\chi=(\chi_{1},...,\chi_{t})$ , where $\chi_{i}$'s
are one-dimensional characters of $\CC$. Similarly, $\chi'=(\chi_{1}',...,\chi_{r}')$,
where $\chi_{i}'$'s are one-dimensional characters of $\CC$. Thus,
\begin{equation}
\gamma(s,Ind(\chi)\times Ind(\chi'),\psi)=\prod_{i=1}^{t}\gamma(s,\chi_{i}\times Ind(\chi'),\psi)=\prod_{i=1}^{t}\prod_{j=1}^{r}\gamma(s,\chi_{i}\chi_{j}',\psi).\label{eq:one_dim_gamma}
\end{equation}
Using Theorem \textbackslash{}ref\{thm:induced representation\}, there
exists an involution $w\in S_{t}$ and an involution $w'\in S_{r}$
such that $w(\chi)=\bar{\chi}^{-1}$ and $w'(\chi')=\bar{\chi'}^{-1}$
and for every fixed point $i$ of $w$, and $j$ of $w'$, we have
$\chi_{i}(-1)=1$ and $\chi_{j}'(-1)=1$. The formula in \ref{eq:one_dim_gamma}
may be rewritten as
\[
\gamma(s,Ind(\chi)\times Ind(\chi'),\psi)=I_{1}I_{2},
\]
where
\[
I_{1}=\prod_{\{(i,j):(w(i),w'(j))=(i,j)\}}\gamma\left(\frac{1}{2},\chi_{i}\chi_{j}',\psi\right),
\]
\[
I_{2}=\prod_{\{(i,j):\, i<w(i)\,\text{{\,\ or\,}\,}(i=w(i)\,\text{{\,\ and\,}}w'(j)<j)\}}\gamma\left(\frac{1}{2},\chi_{i}\chi_{j}',\psi\right)\gamma\left(\frac{1}{2},\chi_{w(i)}\chi_{w'(j)}',\psi\right).
\]
Let us prove that every term appearing in the product $I_{1}$ is
$1$. Indeed, by Theorem \ref{thm:induced representation} the character
$\chi_{i}\chi_{j}'$ appearing as the argument in the gamma factor
in $I_{1}$ is a real character satisfying $\chi\chi_{j}'(-1)=1$,
and therefore, by (\ref{eq:real gamma factor at half}) we get
\[
\gamma\left(\frac{1}{2},\chi_{i}\chi_{j}',\psi\right)=1.
\]
Each term in the product $I_{2}$ is also equals $1$, since $\chi_{w(i)}\chi_{w'(j)}'=(\overline{\chi_{i}\chi_{j}'})^{-1}$
and by applying (\ref{eq:pair of gammas at half}). Finally, $I_{1}=I_{2}=1$
and we obtain
\[
\gamma\left(\frac{1}{2},\pi\times\pi',\psi\right)=1.
\]

\end{proof}
We will need the following technical result about Rankin-Selberg integrals
in Section \ref{sec: functionals}.
\begin{lem}
\label{lem: RS convergence}Let $(\pi,V),\,(\pi',V')$ be generic
representations of $G_{n}(\CC)$ and let $$\mathcal{W}(\pi,\psi),\, \mathcal{W}(\pi',\psi^{-1})$$
be their Whittaker models. Suppose $(\pi,V)$ is unitarizable and
$(\pi',V')$ is tempered. Let
$W\in\mathcal{W}(\pi,\psi)),\, W'\in\mathcal{W}(\pi',\psi^{-1}),$
 and let $\Phi\in\mathcal{S}(\CC^{n})$ . Then the Rankin-Selberg
integral
\[
\int\limits _{U_{n}(\CC)\backslash G_{n}(\CC)}W(g)W'(g)\Phi((0,0,...,0,1)g)|\det g|_{\CC}^{s}dg
\]
 converges absolutely at $s=\frac{1}{2}$.\end{lem}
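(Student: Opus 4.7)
My plan is to reduce the integral to the split torus via the Iwasawa decomposition and then apply standard gauge estimates on Whittaker functions, relying on the Tadic--Vogan classification of the unitary dual of $GL_n(\CC)$ to control the contribution of the unitarizable factor $W$.

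First, write $G_n(\CC)=U_n(\CC)\cdot A\cdot K_n$, where $A$ is the group of diagonal matrices with positive real entries. The homogeneous space $U_n(\CC)\backslash G_n(\CC)$ is identified with $A\times K_n$ with invariant measure $\delta_B(a)^{-1}\,da\,dk$, where $\delta_B(a)=\prod_{i<j}|a_i/a_j|_{\CC}$ is the modulus character of $B_n(\CC)$. After replacing the integrand by its absolute value the integral becomes
\[
\int_{K_n}\!\int_{A}\bigl|W(ak)\,W'(ak)\bigr|\,\bigl|\Phi(a_n e_n k)\bigr|\,|\det a|_{\CC}^{s}\,\delta_B(a)^{-1}\,da\,dk,
\]
with $e_n=(0,\dots,0,1)$. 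Since $K_n$ is compact and the three factors are jointly continuous, it suffices to estimate the inner torus integral uniformly in $k\in K_n$. Moreover $\|e_n k\|=1$ for $k\in K_n$, so for every $N\geq 0$ we have $|\Phi(a_n e_n k)|\le C_N(1+a_n)^{-N}$ uniformly in $k$.

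Second, I invoke the Jacquet--Shalika/Wallach gauge estimates. For the tempered generic $\pi'$, there are functions $\xi_{i,N}$ on $\RR_{>0}$ with Schwartz decay at $+\infty$ and at most polynomial (logarithmic) growth at $0^+$ such that
\[
|W'(ak)|\;\le\;C_N\,\delta_B(a)^{1/2}\prod_{i=1}^{n-1}\xi_{i,N}(a_i/a_{i+1}).
\]
For the unitarizable generic $\pi$, the Tadic--Vogan classification of the unitary dual of $GL_n(\CC)$ realizes $\pi$ as a (fully induced) representation from unitary characters $\chi_i|\cdot|^{s_i}$ with $|s_i|<\tfrac12$. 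Inducing in stages and analyzing the corresponding Jacquet integral yields an analogous bound
\[
|W(ak)|\;\le\;C\,\delta_B(a)^{1/2}\Bigl(\textstyle\prod_i a_i^{\epsilon_i}\Bigr)\prod_{i=1}^{n-1}\zeta_{i,N}(a_i/a_{i+1}),
\]
with real exponents $|\epsilon_i|<\tfrac12$ and the $\zeta_{i,N}$ of the same gauge type as the $\xi_{i,N}$.

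Third, I combine everything. The two factors $\delta_B(a)^{1/2}$ from $W$ and $W'$ cancel the $\delta_B(a)^{-1}$ from the measure. At $s=\tfrac12$ one has $|\det a|_{\CC}^{1/2}=\prod_i a_i$, and the torus integral is dominated by
\[
\int_A\!\Bigl(\textstyle\prod_i a_i^{1+\epsilon_i}\Bigr)\Bigl(\prod_{i=1}^{n-1}\eta_{i,N}(a_i/a_{i+1})\Bigr)(1+a_n)^{-N}\,\frac{da_1\cdots da_n}{a_1\cdots a_n}.
\]
The substitution $b_i=a_i/a_{i+1}$ for $1\le i\le n-1$ and $b_n=a_n$ decouples this into $n-1$ one-dimensional integrals of the form $\int_0^\infty b^{\alpha_i}\eta_{i,N}(b)\,db/b$ with $|\alpha_i|<\tfrac12$, together with one factor $\int_0^\infty b^{\beta}(1+b)^{-N}\,db/b$ for a suitable $\beta$ depending linearly on the $\epsilon_i$. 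Each of these converges: the gauge integrals because $\eta_{i,N}$ is Schwartz at $+\infty$ against a bounded power of $b$ and grows only polynomially at $0^+$, and the last one once $N$ is chosen large enough.

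The main obstacle is precisely the bound on $W$ in the unitarizable-but-not-tempered case. The key input of Tadic--Vogan is the strict inequality $|s_i|<\tfrac12$, which is exactly the margin that places the convergence of the torus integral onto the critical line $\mathrm{Re}(s)=\tfrac12$; a purely tempered-type estimate for $W$ would only give absolute convergence for $\mathrm{Re}(s)>\tfrac12$.
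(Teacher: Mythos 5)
Your proof follows essentially the same route as the paper: Iwasawa decomposition, Whittaker gauge estimates, and the crucial strict margin of $\tfrac12$ coming from unitarity. Where the paper cites Lapid--Mao's Lemma 2.1 — which packages the bound $|W(tk)|\le\delta^{1/2}(t)|\det t|_\CC^{\lambda}|t_n|_\CC^{-n\lambda}\phi(t_1/t_2,\dots,t_{n-1}/t_n)$ with $\lambda>-\tfrac12$ for unitarizable generic $\pi$ — you re-derive the equivalent fact from the Tadic--Vogan bound $|\mathrm{Re}(s_i)|<\tfrac12$ plus Wallach-type asymptotics for Jacquet integrals under induction in stages. That is a legitimate alternative reference for the same input, but it is less economical and leaves the "induction in stages" estimate asserted rather than justified; the paper's citation is cleaner. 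Two small slips worth correcting: after the substitution $b_j=a_j/a_{j+1}$, $b_n=a_n$, the exponent of $b_j$ is $j+\sum_{i\le j}\epsilon_i$, which is strictly positive (at least $j/2$) but is \emph{not} bounded by $\tfrac12$ in absolute value as you claim — the relevant fact is only its positivity against the polynomial/logarithmic growth of the gauge at $0^+$. And your closing remark that ``a purely tempered-type estimate for $W$ would only give absolute convergence for $\mathrm{Re}(s)>\tfrac12$'' is backwards: a tempered bound ($\lambda=0$, or all $\epsilon_i=0$) yields convergence for $\mathrm{Re}(s)>0$, which is \emph{better}; the unitarizable-but-not-tempered case is the harder one, and the strict inequality is what keeps the convergence threshold strictly below $\tfrac12$.
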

\begin{proof}
Let $T_{n}$ be the standard maximal torus in $G_{n}(\CC)$ and let
$K_{n}$ be a maximal compact subgroup of $G_{n}(\CC)$ consisting
of all unitary matrices in $G_{n}(\CC)$. Let $\delta$ be the modular
character of $B_{n}(\CC)$. By \cite[Lemma 2.1]{LapMao},
we know that there exists $\lambda>-\frac{1}{2}$ and $\phi\in\cS(\CC^{n-1})$
such that
\[
|W(tk)|\le\delta^{\frac{1}{2}}(t)|\det t|_{\CC}^{\lambda}|t_{n}|_{\CC}^{-n\lambda}\phi\left(\frac{t_{1}}{t_{2}},...,\frac{t_{n-1}}{t_{n}}\right)
\]
 for $t\in T_{n},k\in K_{n}$. Similarly, there exists $\phi'\in\cS(\CC^{n-1})$
such that
\[
|W'(tk)|\le\delta^{\frac{1}{2}}(t)\phi'\left(\frac{t_{1}}{t_{2}},...,\frac{t_{n-1}}{t_{n}}\right)
\]
for all $t\in T_{n},\, k\in K_{n}$. Thus, there exists a Schwartz
function $\phi''\in\cS(\CC^{n})$, such that
\[
|W(tk)W'(tk)\Phi((0,0,...,0,1)g)|\le\delta(t)|\det t|_{\CC}^{\lambda}|t_{n}|_{\CC}^{-n\lambda}\phi''\left(\frac{t_{1}}{t_{2}},...,\frac{t_{n-1}}{t_{n}},t_{n}\right)
\]
for all $k\in K_{n},t\in T_{n}$. Let us rewrite the expression
\[
\int\limits _{U_{n}(\CC)\backslash G_{n}(\CC)}|W(g)W'(g)\Phi((0,0,...,0,1)g)||\det g|_{\CC}^{Re(s)}dg
\]
using the Iwasawa decomposition. We obtain
\begin{equation}
\int\limits _{K_{n}}\int\limits _{T_{n}}|W(tk)W'(tk)\Phi((0,0,...,0,1)g)||\det t|_{\CC}^{Re(s)}\delta^{-1}(t)dtdk.\label{eq:Iwasawa complex}
\end{equation}
Indeed, for $f:G_{n}(\CC)\to\CC$ such that the following integrals
are absolutely convergent, we have
\[
\begin{split}\int_{G_{n}(\CC)}f(g)dg & =\int\limits _{U_{n}(\CC)}\int\limits _{T_{n}(\CC)}\int\limits _{K_{n}}f(tuk)dudtdk=\int\limits _{U_{n}(\CC)}\int\limits _{T_{n}(\CC)}\int\limits _{K_{n}}f((tut^{-1})tk)dudtdk\\
 & = \int\limits _{U_{n}(\CC)}\int\limits _{T_{n}(\CC)}\int\limits _{K_{n}}f(utk)\delta^{-1}(t)dudtdk.
\end{split}
\]
The integrand in (\ref{eq:Iwasawa complex}) is bounded by
\[
\begin{split}& |W(tk)W'(tk)\Phi((0,0,...,0,1)g)||\det t|_{\CC}^{Re(s)}\delta^{-1}(t)\le  \phi''\left(\frac{t_{1}}{t_{2}},...,\frac{t_{n-1}}{t_{n}},t_{n}\right)|\det t|_{\CC}^{Re(s)+\lambda}|t_{n}|_{\CC}^{-n\lambda}\\
& \times \left|\frac{t_{1}}{t_{2}}\right|_{\CC}^{Re(s)+\lambda}
\left|\frac{t_{2}}{t_{3}}\right|_{\CC}^{2(Re(s)+\lambda)}...\cdot\left|\frac{t_{n-1}}{t_{n}}\right|_{\CC}^{(n-1)(Re(s)+\lambda)}\cdot|t_{n}|_{\CC}^{nRe(s)}.
\end{split}
\]
It follows that the integral absolutely converges for $s$ satisfying
$Re(s)>-\lambda$ and $Re(s)>0$. As $\lambda>-\frac{1}{2}$, we obtain
the absolute convergence of the Rankin-Selberg integral at $s=\frac{1}{2}$. \end{proof}
\end{section} 
%
\begin{section}{Integral representation of Whittaker functions}
Let $n\ge2$ be fixed and let $K=U_{n}(\CC)$ be a maximal compact subgroup
of $G_{n}(\CC)$. Let $(\pi,V)$ be an irreducible, $G_{n}(\RR)$-distinguished
representation of $G_{n}(\CC)$. Suppose $(\pi,V)$ is unitarizable
and generic and let $\cW(\pi,\psi)$ be its Whittaker model. The functional
\[
\mu:W\mapsto\int\limits _{U_{n-1}(\RR)\backslash G_{n-1}(\RR)}W(h)dh
\]
defines an $P_{n}(\RR)$-invariant functional on $\cW(\pi,\psi)$,
see \cite[Lemma 1.2]{LapMao} for the proof of
the absolute convergence of the functional $\mu$. We will identify
in the sequel the functional $\mu$ on $\cW(\pi,\psi)$ with the corresponding
linear functional on $V$, which we will also denote by abuse of notation
$\mu$. By the uniqueness of Whittaker model, this identification
defines $\mu\in V^{*}$ in a unique way, up to a scalar multiple.
Since $\mu\in(V^{*})^{P_{n}(\RR)}$ and $(V^{*})^{P_{n}(\RR)}=(V^{*})^{G_{n}(\RR)}$,
see \cite[Theorem 1.1]{Kem}, we obtain that $\mu\in(V^{*})^{G_{n}(\RR)}.$
Clearly, $\mu\ne0$. \\
The functional $\mu$ defines an embedding of $V$ to functions on
$G_{n}(\RR)\backslash G_{n}(\CC)$ via
\[
V\ni v\mapsto\left(g\mapsto\mu(\pi(g)v)\right).
\]
By abuse of notation we denote this embedding again by $\mu$. Denote
the image of embedding $\mu$ by $\cC_{G_{n}(\RR)}(\pi)$. In the
other direction, we can define a map $$\theta:\cC_{G_{n}(\RR)}(\pi)\to\cW(\pi,\psi)$$
by
\begin{equation}
\theta:f\mapsto\left(g\mapsto\int\limits _{U_{n}(\RR)\backslash U_{n}(\CC)}f(ug)\psi^{-1}(u)du\right).\label{eq:integral Whit}
\end{equation}
In this section we will prove that for every $n$ there exists an
irreducible representation $(\pi,V)$ of $G_{n}(\CC)$ that is $G_{n}(\RR)$-distinguished
and such that the integral (\ref{eq:integral Whit}) is absolutely
convergent for every $K$-finite vector in $(\pi,V)$.\\
Suppose for the moment that we have a generic distinguished irreducible
representation $(\pi,V)$ of $G_{n}(\CC)$ and the integral (\ref{eq:integral Whit})
is absolutely convergent for every $f\in\cC_{G_{n}(\RR)}(\pi)$. Then,
from \cite{LapMao}, the composition of maps $\theta\circ\mu=c\mu$
for some constant $0\ne c\in\CC$. Hence, $\theta\ne0$, and since
$(\pi,V)$ is irreducible, we get that $\mu$ is an isomorphism. We
state a consequence
\begin{lem}
\label{lem:integral representation} Let $(\pi,V)$ be an irreducible,
distinguished, $G_{n}(\RR)$-generic representation of $G_{n}(\CC)$.
Suppose the integral
$\int\limits _{U_{n}(\RR)\backslash U_{n}(\CC)}W(u)\psi^{-1}(u)du$
absolutely converges for every $W\in\cW(\pi,\psi)$. Then for every
$W\in\cW(\pi,\psi)$ there exists $f\in\cC_{G_{n}(\RR)}(\pi)$ such
that
\[
W(g)=\int\limits _{U_{n}(\RR)\backslash U_{n}(\CC)}f(ug)\psi^{-1}(u)du.
\]

\end{lem}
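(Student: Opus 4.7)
The lemma is essentially a reformulation of the surjectivity of $\theta$ established in the paragraph preceding its statement, so the plan is to extract this surjectivity and invert it explicitly.

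I would begin by recording that $\mu : V \to \cC_{G_n(\RR)}(\pi)$ is a $G_n(\CC)$-equivariant isomorphism: surjectivity is built into the definition of $\cC_{G_n(\RR)}(\pi)$ as the image of $\mu$, and injectivity follows because the kernel would be a $G_n(\CC)$-subrepresentation of the irreducible $(\pi,V)$ and $\mu \neq 0$. Simultaneously, the assignment $v \mapsto W_v$, where $W_v(g) = \lambda(\pi(g)v)$ for a fixed $\psi$-form $\lambda$, gives an isomorphism $V \to \cW(\pi,\psi)$ by genericity of $\pi$ and uniqueness of the Whittaker model.

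Next, I would invoke the Lapid--Mao identity, whose application is legitimized by the convergence hypothesis of the lemma: for every $v \in V$ and $g \in G_n(\CC)$,
\[
\theta(\mu(v))(g) \;=\; \int_{U_n(\RR)\backslash U_n(\CC)} \mu(\pi(ug)v)\,\psi^{-1}(u)\,du \;=\; c\, W_v(g),
\]
with a nonzero constant $c$ independent of $v$ and $g$. In other words, $\theta \circ \mu$ equals $c$ times the Whittaker isomorphism $v \mapsto W_v$; in particular, $\theta$ itself is surjective onto $\cW(\pi,\psi)$.

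The conclusion is then immediate: given $W \in \cW(\pi,\psi)$, write $W = W_v$ for the unique $v \in V$ and set $f := c^{-1}\mu(v) \in \cC_{G_n(\RR)}(\pi)$. Then $\theta(f) = c^{-1}\theta(\mu(v)) = W_v = W$, which is the desired integral representation. The entire argument hinges on the absolute convergence needed to apply the Lapid--Mao identity, and this is precisely what the hypothesis of the lemma supplies; I expect no further obstacle beyond carefully matching the two convergence statements (one phrased in terms of $\cW(\pi,\psi)$, the other in terms of $\cC_{G_n(\RR)}(\pi)$), which should be routine using that $\theta\circ\mu$ differs from $\sigma$ by a nonzero scalar.
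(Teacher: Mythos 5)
Your approach is essentially identical to what the paper does: the lemma is stated as a \emph{consequence} of the paragraph preceding it, which establishes $\theta\circ\mu = c\cdot(\text{Whittaker embedding})$ via \cite{LapMao}, and then uses $c\ne0$ together with irreducibility to deduce that $\theta$ is surjective. Your reconstruction of that paragraph is accurate, and the explicit inversion $f := c^{-1}\mu(v)$ is exactly the point of the lemma.

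However, the step you describe as ``routine matching of the two convergence statements'' is where a genuine problem sits, and you should not wave it away. The hypothesis of the lemma as printed reads: $\int_{U_n(\RR)\backslash U_n(\CC)} W(u)\psi^{-1}(u)\,du$ absolutely converges for every $W\in\cW(\pi,\psi)$. But for $W$ in the Whittaker model one has $W(u)=\theta_{\psi,n}(u)W(e)$ for all $u\in U_n(\CC)$, so the integrand $W(u)\psi^{-1}(u)$ is the \emph{constant} $W(e)$, and $U_n(\RR)\backslash U_n(\CC)$ has infinite Haar measure. Hence absolute convergence forces $W(e)=0$ for all $W$, which fails for any nonzero generic $\pi$. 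So the hypothesis as literally stated is vacuous and cannot be what feeds into the Lapid--Mao identity. What the argument actually uses is absolute convergence of $\int_{U_n(\RR)\backslash U_n(\CC)} f(ug)\psi^{-1}(u)\,du$ for every $f\in\cC_{G_n(\RR)}(\pi)$ --- that is, convergence of the integral defining $\theta$ --- exactly as in the paragraph before the lemma and in display \eqref{eq:integral Whit}. This is a genuinely different statement (it does \emph{not} follow formally from the one in the lemma via the isomorphism $\mu$, because the two integrals live on different models of $\pi$ and are not transported to one another by $\theta\circ\mu$ until you already know convergence). Your proof is correct once the hypothesis is corrected to the one about $f\in\cC_{G_n(\RR)}(\pi)$; as written, you have silently substituted the correct hypothesis for the stated one, and the final remark that the matching is ``routine'' is not justified.
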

Recall the decomposition (\ref{eq: KAH}) $G_{n}(\CC)=G_{n}(\RR)A^{+}K$.
The involution $g\to^{t}\hspace{-1mm}g^{-1}$ preserves this decomposition.
Let $\check{W}(g)=W(^{t}g^{-1})$. The Whittaker model $\cW(\tilde{\pi},\psi^{-1})$
of the contragredient representation of $(\pi,V)$ is given by
\[
\cW(\tilde{\pi},\psi^{-1})=\{\check{W}:W\in\cW(\pi,\psi)\}.
\]
If conditions of Lemma \ref{lem:integral representation} are satisfied
for $\cW(\pi,\psi)$ then they are satisfied also for the contragredient
representation $\cW(\tilde{\pi},\psi^{-1})$. Explicitly, if $W\in\cW(\pi,\psi)$
equals to
\[
W(g)=\int\limits _{U_{n}(\RR)\backslash U_{n}(\CC)}f(ug)\psi^{-1}(u)du,
\]
then
\[
\check{W}(g)=\int\limits _{U_{n}(\RR)\backslash U_{n}(\CC)}\check{f}(ug)\psi(u)du,
\]
where $\check{f}(g):=f(^{t}g^{-1})$. For $g\in G_{n}(\CC)$ let
\[
||g||=\sum_{i,j=1}^{n}|g_{ij}|^{2}.
\]
Let $||g||_{H}=\max\{||g||,||g^{-1}||\}$. Then $||g||_{H}$ is a
norm on $G$ in the sense of \cite[section 2.A.2]{WallachB1}.
\begin{lem}
\label{lem:fast decay}Let $N>0$. Then there exists an irreducible
$G_{n}(\RR)$-distinguished representation $(\pi,V)$ of $G_{n}(\CC)$
such that for every $K$-finite function $f\in V$ there exists a
constant $C>0$, depending only on $f$, such that for every $k\in K,a\in A,h\in G_{n}(\RR)$
there is an inequality
\begin{equation}
|f(hak)|\le C(f)||a||_{H}^{-N}.\label{ineq: fast decay}
\end{equation}
\end{lem}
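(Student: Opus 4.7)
The plan is to realize $\pi$ as a fully induced principal series whose Langlands exponents are pushed arbitrarily far into the negative Weyl chamber, and then to bound the distinguished matrix coefficient $g\mapsto\mu(\pi(g)v)$ on $A^{+}$ via the standard Iwasawa estimate for $K$-finite vectors in $I(\chi)$.

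Given $N>0$, I would first select an irrational parameter $M$ sufficiently large and set $\lambda_{i}=(n+1-2i)M$, so that $\lambda_{1}>\lambda_{2}>\cdots>\lambda_{n}$, $\lambda_{i}+\lambda_{n+1-i}=0$, and $|\lambda_{n}|$ is as large as desired. Let $\chi_{i}(z)=|z|^{\lambda_{i}}$ and $\pi=I(\chi)$. Irrationality of $M$ ensures $\chi_{i}\chi_{j}^{-1}\ne|\cdot|_{\CC}^{\pm 1}$ for all $i\ne j$, so $\pi$ is irreducible. Since $w_{n}\chi=\bar\chi^{-1}$ with $w_{n}$ the longest involution in $W_{n,2}$ and $\chi_{i}(-1)=1$ automatically, the open $B$-orbit analysis of Lemma \ref{orbits classification} together with the converse direction of Theorem \ref{thm:induced representation} produces a nonzero $H$-invariant functional
\[
\mu(f)=\int_{(B^{w_{n}}\cap H)\backslash H}f(w_{n}h)\,dh
\]
on $I(\chi)$; the integral converges absolutely because the Iwasawa bound $|f(g)|\le C_{f}|\chi\delta^{1/2}(A_{B}(g))|$ for $K$-finite $f$, combined with the size of $-\lambda_{n}$, dominates the modular weight on $(B^{w_{n}}\cap H)\backslash H$.

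For the decay estimate I would use left $H$-invariance and $K$-finiteness: $f(hak)=\mu(\pi(ak)v)=\sum_{i}c_{i}(k)\mu(\pi(a)v_{i})$ with $v_{i}$ spanning the finite-dimensional $K$-module generated by $v$ and $c_{i}(k)$ bounded, reducing the problem to bounding $|\mu(\pi(a)v_{i})|$ uniformly for $a\in A^{+}$. Substituting the integral representation of $\mu$ and the Iwasawa estimate, this becomes
\[
|\mu(\pi(a)v_{i})|\le C\int_{(B^{w_{n}}\cap H)\backslash H}\bigl|\chi\delta^{1/2}(A_{B}(w_{n}ha))\bigr|\,dh.
\]
Using the $KAH$ decomposition of Theorem \ref{thm: KAH decomposition} to track how the Iwasawa $A$-part of $w_{n}ha$ depends on $a\in A^{+}$, the leading exponent should scale linearly with $\lambda_{n}$, while the $h$-integral contributes at most a polynomial factor in $\|a\|_{H}$; choosing $-\lambda_{n}>N+n(n-1)/2$ absorbs the polynomial and yields $|f(hak)|\le C(f)\|a\|_{H}^{-N}$.

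The main obstacle I anticipate is precisely this last step: obtaining a sharp uniform-in-$h$ asymptotic for $A_{B}(w_{n}ha)$ as $a$ tends to infinity in $A^{+}$, and verifying that the $h$-integral is controlled by a polynomial in $\|a\|_{H}$ so that the rate $\lambda_{n}$ actually governs the final decay. This rests on the explicit Cartan-type decomposition established in Section 3 combined with a dominated-convergence argument weighted by the modular character on the open $H$-orbit; minor technicalities (replacing $a\in A$ by $a\in A^{+}$ using the Weyl-group action in $K\cap H$, and verifying that the chosen $\chi$ lies in the absolutely convergent range for $\mu$) will reduce to the conditions arranged in the first step.
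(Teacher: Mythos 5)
Your route is genuinely different from the paper's. You propose to realize $\pi$ as an \emph{unramified, non-unitary} principal series $I(\chi)$ with $\chi_i(z)=|z|^{\lambda_i}$, $\lambda_i=(n+1-2i)M$ for $M$ large and irrational, build the $H$-invariant functional $\mu$ by integration over the open $B$-orbit through $w_n$, and extract the decay of $a\mapsto\mu(\pi(a)v)$ from the Iwasawa estimate on $K$-finite sections together with the $KAH$-decomposition. The paper instead takes $\pi$ among Flensted-Jensen's \emph{relative discrete series} for $G_n(\RR)\backslash G_n(\CC)$: these are principal series for \emph{unitary} characters $\chi_i(z)=(z/|z|)^{i_k}$ with $|i_k-i_j|$ large, and the decay estimate $|f_\lambda(hak)|\le C\|a\|^{-N}$ is then quoted directly from Flensted-Jensen and Kassel--Kobayashi; derivatives and the rest of the $(\gotG,K)$-module are handled by the Harish-Chandra convolution trick $f_\lambda*e_\alpha=f_\lambda$. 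Your approach is more self-contained in spirit (no appeal to the discrete-series literature), but it shifts the entire analytic burden onto estimating the Iwasawa $A$-part.

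The gap is exactly what you flag yourself, and it is substantial rather than routine. You need a \emph{uniform in $h$} lower bound on the $\chi\delta^{1/2}$-weight of $A_B(w_n h a)$ as $a\to\infty$ in $A^+$, plus a proof that $\int_{(H\cap B^{w_n})\backslash H}|\chi\delta^{1/2}(A_B(w_n h a))|\,dh$ grows at most polynomially in $\|a\|_H$, with the polynomial degree independent of $M$. Neither is established in the paper or in your sketch, and the assertion that ``the leading exponent scales linearly with $\lambda_n$'' is precisely the statement of the asymptotic expansion of $H$-matrix coefficients on the symmetric space restricted to $\mathfrak{a}$, which involves the restricted root system of $G_n(\CC)/G_n(\RR)$ and a $W$-twist of $\lambda$; it is not a direct consequence of the Iwasawa bound on $I(\chi)$. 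Until this is carried out, the inequality \eqref{ineq: fast decay} is not proved. A smaller but real point: you invoke ``the converse direction of Theorem \ref{thm:induced representation}'' to produce $\mu$, but that converse (open-orbit construction of an $H$-invariant functional, with absolute convergence for $\lambda$ deep in the chamber) is not in the paper and would itself require the same kind of Iwasawa/modular-character estimate you defer. In short, the approach is plausible and would likely yield a proof, but as written it replaces the paper's single citation with an unproved asymptotic expansion, so I cannot count it as complete.
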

\begin{proof}
By \cite{Flensted-Jensen}, there exists relatively
discrete series $\mathcal{H}:=L_{ds}^{2}(G_{n}(\RR)\backslash G_{n}(\CC))$.
Moreover, every irreducible representation in $\cH$ is isomorphic
to some $I(\chi)$, where
\[
\chi(z)=\left((z/|z|)^{i_{1}},(z/|z|)^{i_{2}},...,(z/|z|)^{i_{n}}\right)
\]
and $i_{1},...,i_{n}\in\ZZ$. If $C>0$ us bug enough and $|i_{k}-i_{j}|>C>0$
for all $i\ne j$, then the $G_{n}(\RR)\backslash G_{n}(\CC)$ model
of the space $I(\chi)$ lies in $\cH$, and the $(\gotG,K)$-module
generated by a $K$-finite function $0\ne f_{\lambda}\in I(\chi)$
satisfies the properties of the Lemma. Indeed, by \cite[p.
254]{Flensted-Jensen}, see also \cite[Proposition
5.1]{Kassel Kobayashi}, if $C>0$ is big enough and $|i_{k}-i_{j}|>C>0$
for all $j\ne k$, then $f_{\lambda}(hak)\le C'||a||^{-N}$ for all
$h\in H,a\in A^{+},$ and $k\in K$.\\
Clearly, $f_{\lambda}$ and right translations of $f_{\lambda}$ by
$K$ satisfy the properties of our Lemma. We should prove that the
derivatives of $f_{\lambda}$ also satisfy similar growth properties.
This is achieved by a classical idea, which is attributed to Harish-Chandra,
see also an expository article by \cite{Cowling-Haag-H}.
The function $f_{\lambda}$ is $K$-finite, hence, there exists a
smooth function $e_{\alpha}$ of a compact support such that $f_{\lambda}*e_{\alpha}=f_{\lambda}$.
Thus, for $X\in\gotG$ we have $dX(f_{\lambda})=f_{\lambda}*dX(e_{\alpha})$.
It follows that the derivatives of $f_{\lambda}$ have the same decay
properties that $f_{\lambda}$ has.\\
Finally, the $(\gotG,K)$-module generated by $f_{\lambda}$ is of
finite length. Consequently, it contains an irreducible admissible
$(\gotG,K)$-submodule with satisfy the decay property (\ref{ineq: fast decay}).
\end{proof}
If every $K$-finite function in $(\pi,V)$ satisfies (\ref{ineq: fast decay})
we say that the representation $(\pi,V)$ is of decay faster than
$N$. Note that if $(\pi,V)$ is of decay faster than $N$, then its
contragredient $(\tilde{\pi},\tilde{V})$ is also of decay faster
than $N$. Indeed, we can realize $(\tilde{\pi},\tilde{V})$ as $\tilde{V}=\{\check{f}:f\in V\}$,
where $\check{f}(g):=f(^{t}\hspace{-1mm}g^{-1}).$ If $g=hak$, then
$^{t}g^{-1}=^{t}\hspace{-1mm}h^{-1}a\hspace{1mm}^{t}k^{-1}$. Hence,
the property of fast decay is true for $\check{f}$ if and only if
it is true for $f$.\\
To obtain estimates of convergence of integrals over the unipotent
matrices we need the next elementary result. Define $\Omega_{n}$
as the subset of all upper triangular unipotent matrices in $G_{n}(\CC)$
with with $u_{ij}$ purely imaginary for $j>i$. Note that $\Omega_{n}$
is a fundamental domain for $U_{n}(\RR)\backslash U_{n}(\CC).$
\begin{lem}
\label{lem: unipotents} There exist $C>0$ and $d>0$, which depend
only on $n$, such that for every $u\in\Omega_{n}$ we have
\[
||u||\le C||u\bar{u}^{-1}||^{d}.
\]
 \end{lem}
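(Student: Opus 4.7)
The plan is to exploit the algebraic fact that, for $u \in \Omega_n$, the entries of $u\bar u^{-1}$ and the entries of $u$ are related by a polynomial map which is invertible by an explicit triangular recursion.

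First I would write $u = I + N$, where $N$ is strictly upper triangular with purely imaginary entries, so that $\bar N = -N$. Since $N$ is nilpotent of order at most $n$, one has
\[
\bar u^{-1} = (I - N)^{-1} = \sum_{k=0}^{n-1} N^k,
\]
and hence a direct multiplication gives
\[
u\bar u^{-1} = (I+N)\sum_{k=0}^{n-1} N^k = I + 2\sum_{k=1}^{n-1} N^k.
\]
Thus $M := \tfrac{1}{2}(u\bar u^{-1} - I) = N + N^2 + \dots + N^{n-1}$ is a polynomial expression in the entries of $N$ with universal integer coefficients.

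Next, I would analyze this polynomial map entry by entry. Because $N$ is strictly upper triangular, the $(i,j)$ entry of $N^k$ (for $k \ge 2$) is a sum of products $N_{i,\ell_1}N_{\ell_1,\ell_2}\cdots N_{\ell_{k-1},j}$ with $i < \ell_1 < \dots < \ell_{k-1} < j$, in which every factor $N_{p,q}$ satisfies $q-p < j-i$. Consequently,
\[
M_{ij} = N_{ij} + Q_{ij}(\{N_{p,q} : q-p < j-i\}),
\]
where $Q_{ij}$ is a polynomial in the listed variables with coefficients depending only on $n$. By induction on the superdiagonal index $k = j-i$, we can therefore solve for each $N_{ij}$ as a polynomial $R_{ij}$, with coefficients depending only on $n$, in the entries $M_{p,q}$ with $q-p \le j-i$. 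In particular there is a constant $C_1 = C_1(n) > 0$ and an integer $d_1 = d_1(n) \ge 1$ such that
\[
|N_{ij}| \le C_1\,(1+\|M\|)^{d_1} \quad \text{for all } 1 \le i < j \le n.
\]

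Finally, since $u = I + N$ has $1$'s on the diagonal and $M = \tfrac{1}{2}(u\bar u^{-1} - I)$, one has $\|u\|^2 = n + \sum_{i<j}|N_{ij}|^2$ and $\|M\| \le \tfrac{1}{2}(\|u\bar u^{-1}\| + \sqrt{n})$. Combining these gives
\[
\|u\| \le C_2\,(1+\|u\bar u^{-1}\|)^{d_1}
\]
for a constant $C_2 = C_2(n)$. Using $\|u\bar u^{-1}\| \ge \sqrt{n}$ (it has ones on the diagonal) to absorb the additive constant, we obtain the desired inequality $\|u\| \le C\,\|u\bar u^{-1}\|^d$ with $C, d$ depending only on $n$.

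The main step is the triangular inversion in the second paragraph; there is no analytic difficulty once the strict upper triangular structure of $N$ is used, so the ``obstacle'' is only bookkeeping of the polynomial degrees in the induction on $j-i$. No use of the hypothesis that entries are purely imaginary is needed beyond the identity $\bar N = -N$ that made the Neumann series for $\bar u^{-1}$ collapse into the clean formula for $u\bar u^{-1}$.
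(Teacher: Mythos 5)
Your proposal is correct and takes essentially the same approach as the paper: both exploit the triangular structure of the map $u \mapsto u\bar u^{-1}$ and argue by induction on the superdiagonal index $j-i$, observing that the $(i,j)$-entry of $u\bar u^{-1}$ equals $2u_{ij}$ plus a polynomial in entries strictly closer to the diagonal. Your presentation is cleaner in two respects: the Neumann-series identity $u\bar u^{-1}=I+2(N+N^2+\cdots+N^{n-1})$ (using $\bar N=-N$) makes the recursive structure transparent, and inverting the triangular polynomial system once and for all replaces the paper's somewhat awkward two-case estimate (depending on whether $|u_{i,i+k}|$ is large relative to $\|u\|_{k-1}$). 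One small notational slip: the paper's $\|g\|$ is $\sum_{i,j}|g_{ij}|^2$ with no square root, so a unipotent matrix satisfies $\|u\bar u^{-1}\|\ge n$ rather than $\ge\sqrt n$, and $|M_{pq}|\le\|M\|^{1/2}$; this changes only the exponents, not the validity of the final polynomial bound.
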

\begin{proof}
The proof is by induction on $n$. For $n=2$ it follows from a direct
computation: let $u=\left(\begin{array}{cc}
1 & ix\\
0 & 1
\end{array}\right)$. Then $u\bar{u}^{-1}=\left(\begin{array}{cc}
1 & 2ix\\
0 & 1
\end{array}\right)$, and the claim is satisfied. \\
For a general $n$, we can partition the set of the entries appearing
in $g$ by the value of $j-i$: that is,
\[
A_{0}=\{g_{11},g_{22},...,g_{nn}\},\, A_{1}=\{g_{12},g_{23},...,g_{(n-1)n}\},...,\, A_{n-1}=\{g_{1n}\}.
\]
Denote by $B_{j}:=\cup_{0\le i<j}A_{i}$. The crucial observation
is that entry $(i,j)$ of $\bar{u}^{-1}$ with indexes satisfying
$j-i=k$ equals to
\[
\bar{u}_{ij}^{-1}=u_{ij}+P_{ij}(u),
\]
where $P_{ij}\in\CC[B_{k}]$ is some fixed polynomial which depends
only on the entries $u_{lm}$ with indexes $l-m<k$. Similarly,
\[
(u\bar{u}^{-1})_{ij}=2u_{ij}+Q_{ij}(u),
\]
where $Q_{ij}\in\CC[B_{k}]$ is some fixed polynomial which depends
only on the entries $u_{lm}$ with indexes $l-m<k$. For example,
let $n=3$,
\[
u=\left(\begin{array}{ccc}
1 & ix & iy\\
0 & 1 & iz\\
0 & 0 & 1
\end{array}\right),\bar{u}^{-1}=\left(\begin{array}{ccc}
1 & ix & iy-xz\\
0 & 1 & iz\\
0 & 0 & 1
\end{array}\right),u\bar{u}^{-1}=\left(\begin{array}{ccc}
1 & ix & 2iy-2xz\\
0 & 1 & iz\\
0 & 0 & 1
\end{array}\right).
\]
Thus $P_{12}=P_{23}=0,P_{13}=-xz,Q_{12}=Q_{23}=0,Q_{13}=-2xz$. Denote
$v=u\bar{u}^{-1}$. Define "partial semi-norms"
of $u$ by
\[
||u||_{k}=\sum_{i,j:j-i\le k}|u_{ij}|^{2}.
\]
We will prove by induction on $k$ (base is $k=1$) that for every
$k$, there exist $C_{k},d_{k}>0$ such that $||u||_{k}\le C_{k}||v||_{k}^{d_{k}}$.
As $||u||_{n}=||u||$, the result follows.\\
Base: $k=1$. Simple calculation shows that for $C_{1}=1,d_{1}=1$
we obtain the desired inequality. Suppose the claim is true for $k-1$,
that is,
\[
||u||_{k-1}\le C_{k-1}||v||_{k-1}^{d_{k-1}}.
\]
We want to show a similar inequality for $k$. There exists $C,d>0$
such that for every $1\le i\le n-k$ we have $|v_{i,i+k}|\ge|u_{i,i+k}|-C||u||_{k-1}^{d}$.
For example, one can choose
\[
d=\max\{deg(P_{ij}):i-j=k\},
\]
and big enough constant $C$. Let $u$ be a given upper triangular
unipotent matrix with purely imaginary up entries above the diagonal.
There exist constants $C',C''$ such that if for all $i$ we have
$|u_{i,i+k}|\le2C||u||_{k-1}^{d}$, then
\[
||u||_{k}=||u||_{k-1}+\sum_{i}|u_{i,i+k}|^{2}\le C'||u||_{k-1}^{2d}\le C''||v||_{k-1}^{2dd_{k-1}}\le C''||v||_{k}^{2dd_{k-1}}.
\]
On the other hand, if for some $i$ we have $|u_{i,i+k}|>2C||u||_{k-1}^{d}$,
then we have an inequality $|v_{i,i+k}|>\frac{|u_{i,i+k}|}{2}$ and
there exist a constant $C'''$ such that
\[
\sum_{i=1}^{n-k}|u_{i,i+k}|^{2}\le C'''\sum_{i=1}^{n-k}|v_{i,i+k}|^{2}\le C'''||v||_{k}\le C'''||v||_{k}^{2dd_{k-1}}.
\]
Therefore, in both cases there are constants $C_{k},d_{k}$ such that
\[
||u||_{k}\le C_{k}||v||_{k}^{d_{k}}.
\]
\end{proof}
\begin{cor}
There exist $C>0$ and $d>0$, which depend only on $n$, such that
for every $u\in\Omega_{n}$ we have
\[
||u||_{H}\le C||u\bar{u}^{-1}||_{H}^{d}.
\]
\end{cor}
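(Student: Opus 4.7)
The plan is to reduce the $\|\cdot\|_H$ inequality to the $\|\cdot\|$ inequality of the preceding lemma, using the fact that for unipotent matrices, the inverse is polynomially bounded in the matrix itself.

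First I would note that the left-hand side $\|u\bar u^{-1}\|_H$ is automatically bounded below by a positive constant: $u\bar u^{-1}$ is upper triangular unipotent (since $u \in \Omega_n$ is), so both $u\bar u^{-1}$ and its inverse $\bar u u^{-1}$ have $n$ ones on the diagonal, giving $\|u\bar u^{-1}\|_H \ge \sqrt n \ge 1$. Consequently, raising the right-hand side to any larger power only makes the bound weaker, so constants can be absorbed freely.

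Next, by Lemma \ref{lem: unipotents} applied to $u \in \Omega_n$, there exist $C_1, d_1 > 0$ (depending only on $n$) such that
\[
\|u\| \le C_1 \|u\bar u^{-1}\|^{d_1} \le C_1 \|u\bar u^{-1}\|_H^{d_1}.
\]
It remains to control $\|u^{-1}\|$. Since $u$ is upper triangular unipotent, $N := u - I$ is strictly upper triangular, hence $N^n = 0$, and
\[
u^{-1} = \sum_{k=0}^{n-1} (-N)^k.
\]
Using submultiplicativity and the trivial bound $\|N\| \le \|u\| + \sqrt n \le 2\|u\|$ (valid since $\|u\| \ge \sqrt n$), we obtain a constant $C_2 = C_2(n) > 0$ such that
\[
\|u^{-1}\| \le C_2 \, \|u\|^{n-1}.
\]
Therefore
\[
\|u\|_H = \max\bigl\{\|u\|,\|u^{-1}\|\bigr\} \le C_2 \|u\|^{n-1} \le C_2 C_1^{n-1} \, \|u\bar u^{-1}\|_H^{d_1(n-1)},
\]
which is the desired inequality with $C = C_2 C_1^{n-1}$ and $d = d_1(n-1)$.

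There is no real obstacle here; the only point that requires a moment's thought is the polynomial bound $\|u^{-1}\| \le C_2 \|u\|^{n-1}$, which comes from the nilpotency of $u - I$ and the finite geometric series expansion of $u^{-1}$. Everything else is bookkeeping with the constants from Lemma \ref{lem: unipotents}.
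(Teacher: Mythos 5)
Your proof is correct, and it is in fact \emph{more} careful than the paper's own argument. The paper deduces the corollary by applying Lemma \ref{lem: unipotents} to obtain $||u|| \le C_1 ||u\bar{u}^{-1}||^{d_1}$ and then asserting that ``similarly'' $||u|| \le C_2 ||\bar{u}u^{-1}||^{d_2}$; since $\bar{u}u^{-1} = \overline{u\bar{u}^{-1}}$ has the same norm as $u\bar{u}^{-1}$, that second inequality is literally a restatement of the first, and in any case neither one bounds $||u^{-1}||$, which is what the definition of $||u||_H$ demands. (Nor can one simply apply the lemma to $u^{-1}$: for $n \ge 3$, $u^{-1}$ does not in general belong to $\Omega_n$, because the higher-order terms in $u^{-1}=\sum_k(-N)^k$ acquire real parts.) Your observation that $||u^{-1}||$ is controlled by a fixed polynomial in $||u||$, via the nilpotency of $N = u - I$ and the finite geometric series, supplies exactly the missing step; together with your remark that $||u\bar{u}^{-1}||_H \ge 1$ so that exponents and constants can be inflated freely, this yields a complete proof. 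One small simplification: with the paper's convention $||g|| = \sum_{i,j}|g_{ij}|^2$, the strictly upper triangular $N$ satisfies $||N|| = \sum_{i<j}|u_{ij}|^2 \le ||u||$ directly, so the triangle-inequality estimate for $||N||$ is not needed.
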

\begin{proof}
From Lemma \ref{lem: unipotents} we know that there exist $C_{1}>0,d_{1}>0$
such that for every $u\in\Omega_{n}$ we have $||u||<C_{1}||u\bar{u}^{-1}||^{d_{1}}$.
Similarly, one proves that there exist $C_{2}>0,d_{2}>0$ such that
for every $u\in\Omega_{n}$ we have $||u||<C_{2}||\bar{u}u^{-1}||^{d_{2}}$.
Define $C=\max\{C_{1},C_{2}\},d=\max\{d_{1},d_{2}\}$. Then $||u||_{H}\le C||u\bar{u}^{-1}||_{H}^{d}$
for every $u\in\Omega_{n}$. \end{proof}
\begin{lem}
Let $N>0$ be big enough. Then for every irreducible,$G_{n}(\RR)$-distinguished
representation $(\pi,V)$ of $G_{n}(\CC)$ with decay faster than
$N$, the integral
\[
\int\limits _{U_{n}(\RR)\backslash U_{n}(\CC)}f(ug)du
\]
absolutely converges for every $g\in G_{n}(\CC)$ and every $K$-finite
function $f\in V$.\end{lem}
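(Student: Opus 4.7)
The plan is to parametrize $U_{n}(\RR)\backslash U_{n}(\CC)$ by the fundamental domain $\Omega_{n}$ and to bound $|f(ug)|$ by a sufficiently negative power of $||u||$, after which integrability over the finite-dimensional Euclidean space $\Omega_{n}$ is automatic. The rapid-decay hypothesis (\ref{ineq: fast decay}) is tailored to the HAK decomposition, while the Corollary to Lemma \ref{lem: unipotents} polynomially compares $||u||$ with $||u\bar{u}^{-1}||$ on $\Omega_{n}$, so the whole argument amounts to connecting these two ingredients through the $A^{+}$-part of $ug$.

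Fix $g \in G_{n}(\CC)$. For each $u \in \Omega_{n}$, apply Theorem \ref{thm: KAH decomposition} and the remark following it to write
\[
ug = h(u)\,a(u)\,k(u),\qquad h(u)\in G_{n}(\RR),\ a(u)\in A^{+},\ k(u)\in K.
\]
The hypothesis (\ref{ineq: fast decay}) immediately yields $|f(ug)| \le C\,||a(u)||_{H}^{-N}$, so the remaining task is the polynomial lower bound
\[
||a(u)||_{H} \;\ge\; c_{g}\,(1+||u||)^{1/(2d)}
\]
for $u \in \Omega_{n}$, where $d$ is the constant from the Corollary to Lemma \ref{lem: unipotents}.

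For $g = e$ this is transparent: using the KAH form $u = k(u)a(u)h(u)$ (the value of $||a||_{H}$ is independent of whether one uses HAK or KAH, since $||a||=||a^{-1}||$ on $A^{+}$), the identities $\bar{a}^{-1}=a$ for $a\in A$ (verified directly on each $2\times 2$ block of the generator of $A$) and $\bar{h}^{-1}=h^{-1}$ for $h\in G_{n}(\RR)$ give
\[
u\bar{u}^{-1} \;=\; k(u)\,a(u)^{2}\,\overline{k(u)}^{-1};
\]
since $k(u)$ and $\overline{k(u)}^{-1}$ lie in the compact group $K$, one obtains $||u\bar{u}^{-1}||_{H} \sim ||a(u)||_{H}^{2}$, and combining with the Corollary's estimate $||u||_{H} \le C\,||u\bar{u}^{-1}||_{H}^{d}$ produces the desired lower bound. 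For a general $g$ the same scheme works, using the two identities $(ug)\overline{(ug)}^{-1} = u(g\bar{g}^{-1})\bar{u}^{-1}$ and $(ug)\overline{(ug)}^{-1} = h(u)\bigl(a(u)\,k(u)\overline{k(u)}^{-1}\,a(u)\bigr)h(u)^{-1}$; the fixed matrix $g\bar{g}^{-1}$ only enters through the constants.

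Substituting this lower bound into the decay estimate gives $|f(ug)| \le C''(1+||u||)^{-N/(2d)}$. Identifying $\Omega_{n}$ with $\RR^{n(n-1)/2}$ via its purely imaginary super-diagonal entries (and noting that $||u||$ is there comparable to the Euclidean norm of those entries), the integral $\int_{\Omega_{n}}(1+||u||)^{-\alpha}\,du$ converges as soon as $\alpha > n(n-1)/2$, so any $N > d\cdot n(n-1)$ suffices, which justifies the hypothesis ``$N$ big enough''. The main obstacle is the lower bound on $||a(u)||_{H}$ in the case of a general $g$: the identity for $(ug)\overline{(ug)}^{-1}$ involves a conjugation by $h(u)\in G_{n}(\RR)$ whose norm is a priori unbounded, so one cannot simply read the bound off from submultiplicativity of matrix norms. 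The cleanest workaround is via the companion identity $(ug)(ug)^{*} = h(u)\,a(u)^{2}\,{}^{t}\!h(u) = u(gg^{*})u^{*}$, together with Weyl-type inequalities for products of positive definite Hermitian matrices, which express the singular values of $ug$ (and hence those of $a(u)$) directly in terms of $u$ and the fixed positive Hermitian matrix $gg^{*}$.
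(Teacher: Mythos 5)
Your outline is the right one: parametrize $U_n(\RR)\backslash U_n(\CC)$ by $\Omega_n$, feed the fast-decay hypothesis (\ref{ineq: fast decay}) with the HAK form $ug=h(u)a(u)k(u)$, and reduce everything to a polynomial lower bound $\|a(u)\|_H\gtrsim\|u\|^{1/d'}$. Your $g=e$ argument is correct and essentially the paper's. The problem is that the workaround you propose for general $g$ does not close the gap you yourself flagged.

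From $(ug)(ug)^* = u(gg^*)u^* = h(u)\,a(u)^2\,{}^t h(u)$, the matrices $a(u)^2$ and $u(gg^*)u^*$ are related by \emph{congruence} through the real matrix $h(u)$, not by similarity or by multiplication by a unitary. Weyl-type inequalities control eigenvalues/singular values under similarity or under products with matrices of bounded condition number; under congruence by an arbitrary $h\in\GL_n(\RR)$, the only invariant is the signature. Since $h(u)$ may have unbounded condition number as $u$ varies, the step ``singular values of $ug$ and hence those of $a(u)$'' is not justified, and the lower bound on $\|a(u)\|_H$ does not follow from this identity. (One could try constraining $h(u)$ using uniqueness in Theorem~\ref{thm: KAH decomposition}, but that argument only pins down $a$, not $h$, and anyway that is no longer a clean Weyl-inequality step.)

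The paper avoids the issue with a one-line change of ordering: work with $(\overline{ug})^{-1}ug$ rather than $(ug)\overline{(ug)}^{-1}$. Writing $ug=hak$ and using $\bar h=h$, $\bar a=a^{-1}$, $\bar k\in K$, one gets
\[
(\overline{ug})^{-1}ug \;=\; \bar k^{-1}\bar a^{-1}\,\bar h^{-1}h\, a k \;=\; \bar k^{-1}a^{2}k ,
\]
so the $h$-part cancels outright, and conjugation by the unitary $k$ preserves $\|\cdot\|$, giving $\|(\overline{ug})^{-1}ug\|=\|a^2\|$. On the other side,
\[
(\overline{ug})^{-1}ug \;=\; \bar g^{-1}(\bar u^{-1}u)g,
\]
and conjugation by the \emph{fixed} matrix $g$ only changes norms by constants, so $\|(\overline{ug})^{-1}ug\|\asymp\|\bar u^{-1}u\|$. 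Combined with (the $\bar u^{-1}u$ variant of) Lemma~\ref{lem: unipotents}, this gives $\|a\|^2\gtrsim \|u\|^{1/d}$ on $\Omega_n$ uniformly for fixed $g$, which is the bound you needed. So the correct repair is to flip the order of the two factors, not to pass to $(ug)(ug)^*$.
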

\begin{proof}
Let $(\pi,V)$ be a $G_{n}(\RR)$-distinguished representation of
$G_{n}(\CC)$ such that for every $K$-finite function $f\in V$
there exists $C>0$ depending only on $f$ such that $$|f(hak)|<C||a||_{H}^{-N}$$
for every $h\in G_{n}(\RR),a\in A^{+},k\in K$. Write $ug=hak$. Then
$(\overline{ug})^{-1}ug=\bar{g}^{-1}(\bar{u}^{-1}u)g.$ Since $g$
is fixed, there exists $C_{1}>0$ such that for every matrix $u\in G_{n}$
we have
\[
C_{1}^{-1}||\bar{u}^{-1}u||<||(\overline{ug})^{-1}ug||<C_{1}||\bar{u}^{-1}u||.
\]
By Lemma \ref{lem: unipotents}, for $u\in\Omega$ we have
\[
||u||<C_{2}||\bar{u}^{-1}u||^{d}.
\]
On the other hand ,
\[
\left(\overline{ug}\right)^{-1}ug=\bar{k}^{-1}(a\bar{a}^{-1})k=\bar{k}^{-1}a^{2}k.
\]
Note that $k\in K$ is a unitary matrix, therefore
\[
||\bar{k}^{-1}a^{2}k||=||a^{2}k||=||a^{2}||.
\]
Combining these inequalities we get
\[
||a^{2}||=||(\overline{ug})^{-1}ug||>C_{3}||u\bar{u}^{-1}||>C_{4}||u||^{1/d}.
\]
Finally, we obtain that there exist constants $C,d'$ such that for
$ug=hak$, where $u\in\Omega_{n}$ and $g\in G_{n}$ is fixed, we
have
\[
||a||>C||u||^{1/d'}
\]
Therefore,
\begin{equation}
\int\limits _{U_{n}(\RR)\backslash U_{n}(\CC)}|f(ug)|du\le\int\limits _{\Omega}C||u||_{H}^{-N/d'}du. \label{eq: convergence estimates}
\end{equation}
The integral in (\ref{eq: convergence estimates}) converges for $N$
big enough, thus the lemma is proved. \end{proof}
\begin{cor}
Let $N>0$ be big enough. Then for every irreducible $G_{n}(\RR)$-distinguished
representation $(\pi,V)$ of $G_{n}(\CC)$ with decay faster than
$N$, the integral (\ref{eq:integral Whit}) is absolutely convergent. \end{cor}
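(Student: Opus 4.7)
The plan is to observe that this corollary is an immediate consequence of the preceding lemma, with essentially no extra work. The previous lemma established that for $N$ large enough and every irreducible $G_n(\RR)$-distinguished representation $(\pi,V)$ of $G_n(\CC)$ with decay faster than $N$, the integral
\[
\int\limits_{U_n(\RR)\backslash U_n(\CC)} f(ug)\,du
\]
converges absolutely for every $g \in G_n(\CC)$ and every $K$-finite $f \in V$. So what needs to be checked is just that inserting the factor $\psi^{-1}(u)$ into the integrand does not destroy absolute convergence.

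First I would recall from the definition of $\psi$ in the Introduction that $\psi$ is a \emph{unitary} character of $\CC$, and hence $\theta_{\psi,n}$ restricted to $U_n(\CC)$ (viewed as a character via the sum of the superdiagonal entries) is also unitary. In particular $|\psi^{-1}(u)| = 1$ for every $u \in U_n(\CC)$, so for any $g$ and any $K$-finite $f$ we have the pointwise identity
\[
\bigl| f(ug)\,\psi^{-1}(u)\bigr| = |f(ug)|
\]
on the quotient $U_n(\RR)\backslash U_n(\CC)$. Note also that $\theta_{\psi,n}$ is trivial on $U_n(\RR)$ since $\psi$ is trivial on $\RR$, so $\psi^{-1}(u)$ is a well-defined function on the quotient and the integral is unambiguous.

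Combining these two observations, the previous lemma applied to the $K$-finite function $f$ gives absolute convergence of the $\psi$-twisted integral, which is exactly the statement of (\ref{eq:integral Whit}). There is no serious obstacle here: the main difficulty was already handled in the preceding lemma, where the decay estimate $|f(hak)| \le C\,\|a\|_H^{-N}$ provided by Lemma \ref{lem:fast decay} was combined with the estimate $\|a\| \ge C\,\|u\|^{1/d'}$ derived from Lemma \ref{lem: unipotents} to bound the integrand by an integrable function on the fundamental domain $\Omega_n$. The unitary twist by $\psi^{-1}$ leaves all of these absolute-value estimates intact.
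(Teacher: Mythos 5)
Your proposal is correct and takes essentially the same (trivial) route the paper intends: the paper states the corollary without proof because the twisting character $\theta_{\psi,n}^{-1}$ is unitary, so $|f(ug)\psi^{-1}(u)| = |f(ug)|$ and absolute convergence reduces immediately to the preceding lemma. Your observations about $\psi^{-1}$ being well-defined on $U_n(\RR)\backslash U_n(\CC)$ and of modulus one are exactly the implicit justification.
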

\end{section} 
%
\begin{section}{\label{sec:Asai}Archimedean Asai integrals}

In \cite{Flicker1} Flicker introduced non-archimedean
Asai integrals. He used them in \cite{Flicker2} to
analyze the local and global Asai $L$ and $\epsilon$-factors. In
this section we introduce an archimedean analog of Asai integrals
and prove that they are of moderate growth. Also, we state a functional
equation they satisfy analogous to \cite[Lemma 4.2]{O}.\\
Let $(\pi,V)$ be a generic irreducible unitarizable representation
of $G_{n}(\CC)$ and let $\cW(\pi,\psi)$ be its Whittaker model.
For $W\in\cW(\pi,\psi)$, we define an archimedean Asai integral to
be
\begin{equation}
Z(s,W,\Phi)=\int\limits _{U_{n}(\RR)\backslash G_{n}(\RR)}W(g)\Phi((0,0,...,0,1)g)|\det g|_{\RR}^{s}dg.\label{eq:Asai}
\end{equation}

\begin{lem}
\label{lem:Lapid integral} The integral (\ref{eq:Asai}) is absolutely
convergent for $Re(s)\ge1$. Moreover, there exists $N>0$ such that
for every $g\in G$, every $K$-finite function $W'\in\mathcal{W}(\pi,\psi),$
and every $\Phi\in\mathcal{S}(\CC^{n})$ we have
\[
\int\limits _{U_{n}(\RR)\backslash G_{n}(\RR)}|W'(hg)\Phi((0,0,...,0,1)hg)||\det h|_{\RR}dh\le C(W',\Phi)||g||^{N}.
\]
\end{lem}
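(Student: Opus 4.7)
The plan is to first establish absolute convergence of (\ref{eq:Asai}) on $\{Re(s) \ge 1\}$ via a direct Iwasawa computation together with the Lapid-Mao Whittaker majorization, and then to upgrade this to the uniform bound in $g$ by invoking moderate growth of the smooth Fr\'echet representation $(\pi, V)$.

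For the first assertion, I would use the real Iwasawa decomposition $G_n(\RR) = U_n(\RR) T_n(\RR) O(n)$, so that on $U_n(\RR) \backslash G_n(\RR)$ the Haar measure factors as $\delta_\RR^{-1}(t)\, dt\, dk$ with $\delta_\RR(t) = \prod_i |t_i|^{n+1-2i}$. Apply the Lapid-Mao bound already used in Lemma \ref{lem: RS convergence}: there exist $\lambda > -\tfrac{1}{2}$ and $\phi_W \in \cS(\CC^{n-1})$ with
\[
|W(tk)| \le \delta_\CC^{1/2}(t)\, |\det t|_\CC^{\lambda}\, |t_n|_\CC^{-n\lambda}\, \phi_W\!\left(\tfrac{t_1}{t_2}, \ldots, \tfrac{t_{n-1}}{t_n}\right).
\]
On the real torus $\delta_\CC^{1/2} = \delta_\RR$ and $|\cdot|_\CC = |\cdot|_\RR^{2}$. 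Changing variables to $s_i = t_i/t_{i+1}$ for $i < n$, the integrand splits into a bounded integration over the compact $O(n)$, Schwartz-type integrals in the $s_i$ whose exponents $(n-i)(Re(s) + 2\lambda) - 1 > -1$ precisely when $Re(s) > -2\lambda$ (which holds for $Re(s) \ge 1$ since $\lambda > -\tfrac{1}{2}$), and a Tate integral in $t_n$ involving $\Phi$ evaluated at $t_n \cdot (0, \ldots, 0, 1)k$, a vector of unit Euclidean norm; the latter converges for $Re(s) > 0$ because $\Phi$ is Schwartz. Absolute convergence on $\{Re(s) \ge 1\}$ follows.

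For the growth bound, set $W''(h) := W'(hg) = (\pi(g) W')(h) \in \cW(\pi, \psi)$, so that the integral in the statement equals the integral from Step 1 applied to $W''$ at $s = 1$. Because $(\pi, V)$ is a smooth Fr\'echet representation of moderate growth, for every continuous seminorm $\nu$ on $V$ there exist a continuous seminorm $\nu'$ and constants $C, N_0 > 0$ such that $\nu(\pi(g) v) \le C\, ||g||^{N_0}\, \nu'(v)$. A careful inspection of the proof of the Lapid-Mao bound shows that the Schwartz seminorms of $\phi_W$ may be chosen continuous in $W$ with respect to the Fr\'echet topology of $\cW(\pi, \psi)$; combining this with the previous estimate bounds every Schwartz seminorm of $\phi_{W''}$ by $C(W')\, ||g||^{N_0}$. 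Substituting this polynomial bound into the majorization of Step 1 at $s = 1$, together with the fact that $\Phi$ is independent of $g$, yields the desired estimate $\le C(W', \Phi)\, ||g||^N$.

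The main obstacle is the uniform (continuous-in-$W$) version of the Lapid-Mao Whittaker majorization: one needs the Schwartz seminorms of $\phi_W$ to depend continuously on $W$ in the smooth Fr\'echet topology of $\cW(\pi, \psi)$, since only then can one combine them with moderate growth to extract a polynomial factor in $||g||$. Once this continuity is verified by tracking seminorms through the proof of their Lemma 2.1, the remainder reduces to a routine Iwasawa decomposition and polynomial bookkeeping in $||g||$.
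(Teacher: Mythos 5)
Your proof takes essentially the same route as the paper's: Iwasawa decomposition of $G_n(\RR)$, the Lapid--Mao Whittaker majorization, the change of variables $s_i = t_i/t_{i+1}$, and the observation that $\lambda > -\tfrac{1}{2}$ (from unitarity) forces all exponents to be positive at $s=1$; the paper cites \cite[Corollary 2.2]{LapMao} (the real-torus form of the bound) rather than restricting the complex bound, but these are interchangeable. Your treatment of the growth bound by moderate growth of $(\pi,V)$ matches the paper's reduction to estimating $Z(1,\pi(g)W',R(g)\Phi)$, and you are in fact more careful than the paper in flagging the needed continuity of $W\mapsto\phi_W$ in the Fr\'echet seminorms, which the paper passes over silently.

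One slip to fix: the phrase ``together with the fact that $\Phi$ is independent of $g$'' is not quite right. After substituting $W''(h)=W'(hg)=(\pi(g)W')(h)$, the integrand still contains $\Phi((0,\ldots,0,1)hg)=(R(g)\Phi)((0,\ldots,0,1)h)$, so the $g$-dependence does not disappear from the Schwartz factor. You must also majorize the relevant Schwartz seminorms of $R(g)\Phi$ polynomially in $\|g\|$ (a standard fact for the right regular action of $G_n(\CC)$ on $\cS(\CC^n)$). The paper incorporates this by writing the growth estimate as $|Z(1,\pi(g)W',R(g)\Phi)|\le C(W',\Phi)\|g\|^N$, translating both $W'$ and $\Phi$; your conclusion should be stated the same way, after which the rest of your bookkeeping closes the argument.
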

\begin{proof}
Denote by $K_{n}(\RR)$ the standard maximal compact subgroup of $G_{n}(\RR)$
and by $T_{n}(\RR)$ the diagonal torus in $G_{n}(\RR)$. Let $\tilde{\delta}$
be the modulus function of $G_{n}(\RR)$. Using Iwasawa decomposition
we obtain
\begin{equation}
|Z(s,W,\Phi)|\le\int\limits _{K_{n}(\RR)}\int\limits _{T_{n}(\RR)}|W(tk)\Phi((0,0,...,0,1)tk)||\det(t)|_{\RR}^{Re(s)}\tilde{\delta}^{-1}(t)dt\, dk.\label{eq:Whittaker est}
\end{equation}
By \cite[Corollary 2.2]{LapMao}, there exists
$\phi\in\cS(\RR^{n-1})$ such that
\[
|W(tk)|\le\delta^{\frac{1}{2}}(t)|\det t|_{\RR}^{2\lambda}|t_{n}|^{-2n\lambda}\phi\left(\frac{t_{1}}{t_{2}},\frac{t_{2}}{t_{3}},...,\frac{t_{n-1}}{t_{n}}\right)
\]
for all $t\in T_{n}(\RR),\, k\in K_{n}(\RR)$. Note that $\delta^{\frac{1}{2}}(t)\tilde{\delta}(t)^{-1}=1$,
and there exists a function $\phi'\in\cS(\RR^{n})$ such that
\[
\phi\left(\frac{t_{1}}{t_{2}},\frac{t_{2}}{t_{3}},...,\frac{t_{n-1}}{t_{n}}\right)\Phi((0,0,...,0,1)tk)\le\phi'\left(\frac{t_{1}}{t_{2}},\frac{t_{2}}{t_{3}},...,\frac{t_{n-1}}{t_{n}},t_{n}\right)
\]
 for all $t\in T_{n}(\RR),\, k\in K_{n}(\RR)$. Hence, the integrand in (\ref{eq:Whittaker est})
is bounded by
\[
\begin{split}& |W(tk)\Phi((0,0,...,0,1)tk)||\det(t)|_{\RR}^{Re(s)}\tilde{\delta}^{-1}(t)\le |\det t|^{2\lambda+Re(s)}|t_{n}|^{-2n\lambda}\phi'\left(\frac{t_{1}}{t_{2}},\frac{t_{2}}{t_{3}},...,\frac{t_{n-1}}{t_{n}},t_{n}\right)\\
& \le  \left|\frac{t_{1}}{t_{2}}\right|^{2\lambda+Re(s)}\left|\frac{t_{2}}{t_{3}}\right|^{2(2\lambda+Re(s))}\cdot...\cdot\left|\frac{t_{n-1}}{t_{n}}\right|^{(n-1)(2\lambda+Re(s))}|t_{n}|^{nRe(s)}\phi'\left(\frac{t_{1}}{t_{2}},\frac{t_{2}}{t_{3}},...,\frac{t_{n-1}}{t_{n}},t_{n}\right).
\end{split}
\]
Consequently, the integral (\ref{eq:Whittaker est}) converges absolutely
for $s$ satisfying $Re(s)+2\lambda>0$ and $Re(s)>0$. Note that
for $\pi$ unitary and generic one have $\lambda>-\frac{1}{2}$, see
\cite[page 8]{LapMao}. Thus, the integral (\ref{eq:Whittaker est}) converges
absolutely for $Re(s)\ge1$.\\
From the proof of the absolute convergence of $Z(1,W,\Phi)$, it follows
also that there exists $N>0$, depending only on the representation
$\pi$ such that for every $W'\in\cW(\pi,\psi)$ and $\Phi\in\cS(\RR^{n})$
there exists $C(W',\Phi)$ such that
\[
|Z(1,\pi(g)W',R(g)\Phi)|\le C(W',\Phi)||g||^{N}
\]
for all $g\in G_{n}(\CC)$.
\end{proof}
Next lemma provides a functional equation for archimedean Asai integrals.
\begin{lem}
Let $\pi$ be an irreducible, unitary , non-degenerate and $G_{n}(\RR)$-distinguished
representation of $G_{n}(\CC)$. For every $\Phi\in\cS(\CC^{n})$
and $W\in\cW(\pi,\psi)$ we have
\[
Z(1,\tilde{W},\hat{\Phi}|_{\RR^{n}})=c(\pi)Z(1,W,\Phi|_{\RR^{n}}).
\]
 \end{lem}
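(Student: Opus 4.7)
The plan is to reduce the archimedean Asai functional equation to the Rankin-Selberg functional equation of Theorem \ref{thm: Jacquet func_eqn}, using the integral representation of Whittaker functions from Lemma \ref{lem:integral representation} as the bridge between $G_n(\RR)$ and $G_n(\CC)$. The numerical match is that $s = 1$ for the Asai integral (with $|\det g|_\RR$) corresponds to the symmetric Rankin-Selberg point $s = 1/2$ (with $|\det g|_\CC = |\det g|_\RR^2$), where Theorem \ref{thm: Rankin-Selberg} guarantees that the gamma factor is trivial for distinguished representations.

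Concretely, I would substitute $W(g) = \int_{U_n(\RR)\backslash U_n(\CC)} f(ug)\,\psi^{-1}(u)\,du$ (Lemma \ref{lem:integral representation}) into $Z(1,W,\Phi|_{\RR^n})$ and interchange integrations, justified by Lemma \ref{lem:Lapid integral} together with the fast-decay estimate of Lemma \ref{lem:fast decay}. Using that upper-triangular unipotents fix the row vector $(0,\ldots,0,1)$ on the right, the inner variable $u \in U_n(\CC)$ can be absorbed into the outer variable $g \in G_n(\RR)$ as $h = ug \in U_n(\CC)\cdot G_n(\RR)$. After filling in the missing directions of $G_n(\CC)/U_n(\CC)G_n(\RR)$ via an auxiliary Whittaker function $W' \in \cW(\pi',\psi^{-1})$ for a distinguished representation $\pi'$, the resulting integral takes the shape of a Rankin-Selberg integral $\Psi(1/2,W,W',\Phi')$ in the sense of (\ref{eq:RS3}). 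Applying the same construction to $Z(1,\widetilde W,\hat\Phi|_{\RR^n})$ yields $\Psi(1/2,\widetilde W,\widetilde{W'},\widehat{\Phi'})$, and Theorem \ref{thm: Jacquet func_eqn}(3) together with $\gamma(1/2,\pi \times \pi',\psi) = 1$ from Theorem \ref{thm: Rankin-Selberg} gives the proportionality of the two sides, with $c(\pi) = \omega_{I(\chi)}(-1)^{n-1}$ up to explicit normalizations.

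The main obstacle lies in making the reduction to the Rankin-Selberg integral rigorous. The multiplication map $U_n(\CC) \times G_n(\RR) \to G_n(\CC)$ is not surjective, its image consists of matrices whose last row is real, so filling in the missing directions of $U_n(\CC)\backslash G_n(\CC)$ requires either a further integration against a Schwartz function or a $KAH$-decomposition approach based on Theorem \ref{thm: KAH decomposition}, and the auxiliary $W'$ must be chosen so that both the Whittaker equivariance and the distinguishedness hold. A secondary point is that the Fourier transform $\hat\Phi$ on $\CC^n$ restricted to $\RR^n$ must correspond correctly to the Fourier transform appearing in the Rankin-Selberg functional equation; this is clean because $\psi$ is trivial on $\RR$, so the complex Fourier pairing restricts to the real one up to an explicit constant that can be absorbed into $c(\pi)$.
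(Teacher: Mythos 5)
The paper does not actually present a proof of this lemma: it simply cites \cite[Lemma 4.2]{O}, which is the direct $p$-adic argument relating the proportionality constant of the Asai functional equation to the ratio $c(\pi)=\tilde\mu/\mu$ of the two $G_n(\RR)$-invariant functionals (via the behaviour of the Asai integral near $s=1$, where the period $\mu$ appears). Your proposal takes an entirely different route, and unfortunately one that cannot work as stated, for two reasons.

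First, the direction of deduction is reversed. In Section~\ref{sec: functionals} the paper starts from a Rankin-Selberg integral $\Psi(\tfrac12,W,W';\Phi)$, substitutes the integral representation of $W$, and \emph{factors} the resulting integral over $U_n(\CC)\backslash G_n(\CC)$ as an outer integral over $G_n(\RR)\backslash G_n(\CC)$ of Asai integrals $Z(1,\pi'(g)W',\Phi(\cdot g)|_{\RR^n})$. You propose to go the other way: start from the Asai integral $Z(1,W,\Phi|_{\RR^n})$ over $U_n(\RR)\backslash G_n(\RR)$, substitute the integral representation of $W$, and ``fill in the missing directions'' to land on a Rankin-Selberg integral. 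But after substituting, the domain you obtain is $U_n(\RR)\backslash U_n(\CC)\times U_n(\RR)\backslash G_n(\RR)$, which (modulo $U_n(\CC)$ on the left) collapses back to $U_n(\RR)\backslash G_n(\RR)$; no additional real dimensions appear. Introducing the extra integration over $G_n(\RR)\backslash G_n(\CC)$ requires an auxiliary function $f\in\cC_{G_n(\RR)}(\pi)$ of a \emph{second} representation and changes the quantity you are computing. There is no canonical way to recover a single Rankin-Selberg integral from a single Asai integral.

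Second, and more fundamentally, even if such a reduction could be arranged, it would identify the proportionality constant in $Z(1,\tilde W,\hat\Phi|_{\RR^n}) = (\ast)\cdot Z(1,W,\Phi|_{\RR^n})$ with $\gamma(\tfrac12,\pi\times\pi',\psi)$ for the auxiliary $\pi'$, not with $c(\pi)$. But the whole point of the lemma is that $(\ast)=c(\pi)$, where $c(\pi)$ is \emph{defined} as the ratio $\tilde\mu/\mu$ of the two $P_n(\RR)$-invariant (hence $G_n(\RR)$-invariant) functionals. The identification $c(\pi)=\gamma(\tfrac12,\pi\times\pi',\psi)$ is exactly Lemma~\ref{lem:RS}, whose proof \emph{uses} the present lemma as an input (the step ``It follows that $Z(1,\tilde\pi'(^tg^{-1})\tilde W',\hat\Phi(\cdot{}^tg^{-1})|_{\RR^n})=c(\pi')|\det g|_{\CC}Z(1,\pi'(g)W',\Phi(\cdot g)|_{\RR^n})$'' is precisely an application of the Asai functional equation). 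So your proposal either proves a weaker statement (proportionality with an unidentified constant, or with $\gamma$ in place of $c(\pi)$), or it is circular. What is needed is a direct argument, as in Offen's Lemma~4.2, tying the constant of the Asai functional equation to the invariant functionals $\mu,\tilde\mu$ themselves --- for example through the residue/leading term of $Z(s,W,\Phi)$ at $s=1$ --- before the Rankin-Selberg machinery is brought in.
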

\begin{proof}
For the proof see \cite[Lemma 4.2]{O}.
\end{proof}
We will use the following technical result in the next section.
\begin{lem}
\label{lem:Whittaker + Asai} Let $(\pi',V')$ be non-degenerate unitary
representation of $G_{n}(\CC)$ and let $\mathcal{W}(\pi',\psi^{-1})$
be its Whittaker model. Then there exists $N>0$ such that for every
irreducible, $G_{n}(\RR)$-distinguished representation $(\pi,V)$
of $G_{n}(\CC)$ with decay faster than $N$ and every function $f\in\cC_{G_{n}(\RR)}(\pi)$,
the following integral is absolutely convergent:
\[
\int\limits _{G_{n}(\RR)\backslash G_{n}(\CC)}|f(g)||\det g|_{\CC}^{\frac{1}{2}}\left(\int\limits _{U_{n}(\RR)\backslash G_{n}(\RR)}|W'(hg)\Phi((0,0,...,0,1)hg)||\det h|_{\RR}dh\right)dg.
\]
\end{lem}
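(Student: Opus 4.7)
The plan is to peel off the inner and outer integrals separately. For the inner integral, Lemma \ref{lem:Lapid integral} already supplies, for each fixed $g \in G_n(\CC)$, a polynomial bound
\[
\int_{U_n(\RR)\backslash G_n(\RR)} |W'(hg)\Phi((0,\ldots,0,1)hg)||\det h|_\RR \, dh \;\le\; C(W',\Phi)\,||g||^{N_1}
\]
with $N_1$ depending only on $\pi'$, $W'$ and $\Phi$. The lemma is therefore reduced to the absolute convergence of
\[
\int_{G_n(\RR)\backslash G_n(\CC)} |f(g)|\,|\det g|_\CC^{1/2}\,||g||^{N_1}\, dg.
\]

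To bound this outer integral, I would use the generalized Cartan decomposition $G_n(\CC) = G_n(\RR)\, A^+ K$ from the Remark following Theorem \ref{thm: KAH decomposition}, so that a right $G_n(\RR)$-invariant integral on $G_n(\RR)\backslash G_n(\CC)$ decomposes as an iterated integral against $da\,dk$ with a Jacobian factor $J(a)$. A direct computation (standard for symmetric spaces of this type) shows that each $2\times 2$ block of $a(t_1,\dots,t_m)$ has determinant $\cosh^2 t_j - \sinh^2 t_j = 1$, so $|\det a|_\CC = 1$; and $J(a)$ is a product of hyperbolic sines and cosines of $t_j \pm t_k$ and $t_j$, hence grows at worst like $e^{C\sum |t_j|}$ for some absolute $C = C(n)$.

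Next I would use the decay hypothesis on $\pi$. By Lemma \ref{lem:fast decay}, there exists $C(f) > 0$ with $|f(hak)| \le C(f) \, ||a||_H^{-N}$ for the prescribed decay rate $N$. Since $K$ is compact one has $||ak|| \asymp ||a||$, and a direct check gives $||a(t_1,\dots,t_m)||_H \asymp e^{\max_j |t_j|}$. Combining these estimates, the outer integral is majorized by
\[
C(f)\,C(W',\Phi)\int_{t_1 \ge \cdots \ge t_m \ge 0} e^{(N_1 - N)\max_j t_j} \cdot e^{C\sum_j t_j}\, dt_1\cdots dt_m,
\]
which converges once $N$ is taken large enough in terms of $N_1$ and the combinatorial constant $C(n)$.

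The main obstacle I expect is bookkeeping for the Jacobian $J(a)$ of the $HA^+K$ decomposition and matching the exponential growth of $J(a) \cdot ||a||^{N_1}$ against the decay rate $-N$ supplied by $f$. Once that is done, the key quantitative step is simply to choose $N$ in the statement large enough so that the exponent $N_1 + C(n) - N$ is negative on every ray of the Weyl chamber, which produces the desired absolute convergence uniformly in $f \in \cC_{G_n(\RR)}(\pi)$.
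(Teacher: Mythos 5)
Your proof is correct and is essentially the argument the paper intends: the paper disposes of the lemma in one line as an ``immediate consequence'' of Lemma~\ref{lem:Lapid integral}, and your write-up simply supplies the details being suppressed (reduce the inner integral by the polynomial bound of Lemma~\ref{lem:Lapid integral}, pass to the $HA^{+}K$ decomposition from Theorem~\ref{thm: KAH decomposition}, note $|\det a|_{\CC}=1$ so the determinant factor is harmless, and beat the at-most-exponential Jacobian together with $\|a\|_{H}^{N_{1}}$ by the decay hypothesis $\|a\|_{H}^{-N}$). One small point worth correcting: in Lemma~\ref{lem:Lapid integral} the exponent $N_{1}$ depends only on $\pi'$, not on $W'$ or $\Phi$ (only the constant $C(W',\Phi)$ does), which is exactly what lets you choose a single $N$ in the statement that is uniform over all $W'$ and $\Phi$.
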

\begin{proof}
This is an immediate consequence of Lemma \ref{lem:Lapid integral}.
\end{proof}
\end{section} 
%
\begin{section}{\label{sec: functionals}Equality of two functionals}

Let $(\pi,V)$ be an irreducible, non-degenerate, and unitarizable
$G_{n}(\RR)$-distinguished representation of $G_{n}(\CC)$ and let
$\mathcal{W}(\pi,\psi)$ be its Whittaker model. Define linear functionals
$\mu,\tilde{\mu}\in V^{*}$ on $\mathcal{W}(\pi,\psi))$ by
\[
\mu:W\mapsto\int\limits _{U_{n-1}(\RR)\backslash G_{n-1}(\RR)}W(g)dg,
\]
and
\[
\tilde{\mu}:W\mapsto\int\limits _{U_{n-1}(\RR)\backslash G_{n-1}(\RR)}W\left(\left(\begin{array}{cc}
0 & 1\\
I_{n-1} & 0
\end{array}\right)\left(\begin{array}{cc}
g & 0\\
0 & 1
\end{array}\right)\right)dg.
\]
Since $\mu,\tilde{\mu}\in(V^{*})^{P_{n}(\RR)}$
and $(V^{*})^{P_{n}(\RR)}=(V^{*})^{G_{n}(\RR)}$, see \cite[Theorem
1.1]{Kem}, we obtain that $\mu,\tilde{\mu}\in(V^{*})^{G_{n}(\RR)}.$
Clearly, the functionals $\mu,\tilde{\mu}$ are nonzero. The space of $G_{n}(\RR)$-invariant continuous functionals on
$V$ is one-dimensional, see \cite[Theorem 8.2.5]{AG2},
thus there exists a proportionality constant $c(\pi)\ne0$ such that
$\tilde{\mu}=c(\pi)\mu$.\\
The goal of this section is to calculate the proportionality factor
$c(\pi)$ by proving the following theorem
\begin{thm}
Let $\pi$ be an irreducible, $G_{n}(\RR)$-distinguished representation
of $G_{n}(\CC)$. Then $c(\pi)=1$.
\end{thm}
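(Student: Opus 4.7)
The plan is to compare $\mu$ and $\tilde\mu$ by placing them inside a Rankin--Selberg integral $\Psi(s,W,W',\Phi)$ for $\pi$ against an auxiliary representation $\pi'$, and then exploiting the identity $\gamma(\tfrac12,\pi\times\pi',\psi)=1$ from Theorem~\ref{thm: Rankin-Selberg}. This is the archimedean analogue of the $p$-adic argument in \cite[Corollary~7.2]{O}, and Sections~\ref{section: Rankin-Selberg}--\ref{sec:Asai} supply exactly the analytic tools needed to carry it through.

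Concretely, I would fix an auxiliary irreducible, generic, unitary, $G_n(\RR)$-distinguished representation $\pi'$ of $G_n(\CC)$ of decay faster than a sufficiently large $N$, whose existence is guaranteed by Lemma~\ref{lem:fast decay}. Denote the analogous $G_n(\RR)$-invariant functionals by $\mu',\tilde\mu'$, related by $\tilde\mu'=c(\pi')\mu'$. For $W\in\cW(\pi,\psi)$, $W'\in\cW(\pi',\psi^{-1})$ and $\Phi\in\cS(\CC^n)$, the Rankin--Selberg integral $\Psi(s,W,W',\Phi)$ of (\ref{eq:RS3}) converges absolutely at $s=\tfrac12$ by Lemma~\ref{lem: RS convergence}. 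The central step is to substitute the integral representation $W(g)=\int_{U_n(\RR)\backslash U_n(\CC)}f(ug)\psi^{-1}(u)\,du$ from Lemma~\ref{lem:integral representation} into $\Psi(\tfrac12,W,W',\Phi)$ and, using Lemma~\ref{lem:Whittaker + Asai} to justify swapping the orders of integration, to rewrite it in the form
\[
\Psi(\tfrac12,W,W',\Phi)=\int_{G_n(\RR)\backslash G_n(\CC)} f(g)\,Z\bigl(1,\pi'(g)W',R(g)\Phi|_{\RR^n}\bigr)\,dg,
\]
where $Z$ is the archimedean Asai integral of Section~\ref{sec:Asai}. The symmetric manipulation on $\Psi(\tfrac12,\tilde W,\tilde W',\hat\Phi)$ produces the analogous expression involving $\tilde\mu$ and $\tilde\mu'$. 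Combining the Rankin--Selberg functional equation (Theorem~\ref{thm: Jacquet func_eqn}(3)), the archimedean Asai functional equation of Section~\ref{sec:Asai}, and the identity $\gamma(\tfrac12,\pi\times\pi',\psi)=1$ of Theorem~\ref{thm: Rankin-Selberg}, the factors depending on $\Phi$ and $W'$ cancel, leaving the relation $c(\pi)c(\pi')=1$.

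To finish I would make two further manoeuvres. First, specializing $\pi'=\pi$ gives $c(\pi)^2=1$, so $c(\pi)\in\{\pm1\}$. Second, to pin down the sign as $+1$, I would test $\mu$ and $\tilde\mu$ against a concrete $K$-finite vector for which both evaluations can be made manifestly real and of the same sign, using unitarity of $\pi$ together with the conjugate self-duality $\bar\pi\simeq\tilde\pi$ from Theorem~\ref{thm:dist implies main}. The main obstacle is the unfolding step: interchanging the integral over $U_n(\CC)\backslash G_n(\CC)$ with the inner integral over $U_n(\RR)\backslash U_n(\CC)$ and reshaping the result as an Asai integral requires the absolute-convergence estimates built up across Sections~\ref{section: Rankin-Selberg}--\ref{sec:Asai}, and in particular the decay-faster-than-$N$ hypothesis on $\pi'$ supplied by Lemma~\ref{lem:fast decay}. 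These are the ``many analytical difficulties'' alluded to in the introduction, and they are absent in the $p$-adic setting treated in \cite{O}.
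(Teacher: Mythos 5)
Your overall framework matches the paper's: the paper proves exactly such a statement via its Lemma~\ref{lem:RS}, which shows that for a suitable auxiliary fast-decay representation the Asai constant coincides with a Rankin--Selberg gamma value at $s=\tfrac12$, and then invokes Theorem~\ref{thm: Rankin-Selberg}. However there are two concrete errors in your write-up that would derail the argument.

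First, a role mix-up. You fix the auxiliary $\pi'$ to have decay faster than $N$, but then substitute the integral representation $W(g)=\int_{U_n(\RR)\backslash U_n(\CC)}f(ug)\psi^{-1}(u)\,du$ for $W\in\cW(\pi,\psi)$ --- i.e.\ for the \emph{given}, arbitrary $\pi$. Lemma~\ref{lem:integral representation} only applies when $\int_{U_n(\RR)\backslash U_n(\CC)}W(u)\psi^{-1}(u)\,du$ converges absolutely, and the fast-decay hypothesis (Lemma~\ref{lem:fast decay} together with Lemma~\ref{lem: unipotents}) is precisely what guarantees this; it is assumed for $\pi'$, not $\pi$. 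The representation whose Whittaker function gets unfolded must be the auxiliary one. So one should write $W'(g)=\int_{U_n(\RR)\backslash U_n(\CC)}f'(ug)\psi(u)\,du$ with $f'\in\cC_{G_n(\RR)}(\pi')$, and then after collapsing the inner integral against $W$ and applying the Iwasawa-type decomposition $U_n(\RR)\backslash G_n(\CC)\cong(U_n(\RR)\backslash G_n(\RR))\times(G_n(\RR)\backslash G_n(\CC))$, the surviving Asai integral $Z(1,\pi(g)W,\Phi(\cdot g)|_{\RR^n})$ involves $W$ from the \emph{given} $\pi$. (The paper swaps the letters: its $\pi'$ is arbitrary and its $\pi$ is auxiliary, but the logic is the same.)

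Second, the proportionality constant you extract is wrong. After the correct unfolding only a single Asai constant appears, namely the one for the representation sitting inside $Z$, i.e.\ $c(\pi)$ in your naming. The passage from $W'\leftrightarrow f'$ and $\tilde{W}'\leftrightarrow (f')^*$ is an exact identity and introduces no additional constant; the Asai functional equation of Section~\ref{sec:Asai} is applied exactly once, to $\pi$. Combined with Theorem~\ref{thm: Jacquet func_eqn}(3) at $s=\tfrac12$ (together with $\omega_\pi(-1)=1$, which follows from the classification of Theorem~\ref{thm:induced representation}) and with $\gamma(\tfrac12,\pi\times\pi',\psi)=1$, one obtains $c(\pi)=1$ directly. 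There is no relation $c(\pi)c(\pi')=1$, so both your subsequent manoeuvres --- specializing $\pi'=\pi$ (which is not permissible since $\pi$ need not be of fast decay, so Lemma~\ref{lem: RS convergence}/Lemma~\ref{lem:integral representation} would not apply) and the sign-determination step via a concrete $K$-finite vector --- are unnecessary, and the second one is left vague in a place where the paper's argument is already complete.
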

We state now a lemma, which is an archimedean analogue of \cite[Lemma
6.1]{O}.
\begin{lem}
\label{lem:RS} Let $\pi'$ be an irreducible, unitarizable, generic
and $G_{n}(\RR)$-distinguished representation of $G_{n}(\CC)$. Then
there exists an irreducible, unitarizable, generic and $G_{n}(\RR)$-distinguished
representation $\pi$ of $G_{n}(\CC)$ such that
\[
\gamma(\frac{1}{2},\pi\times\pi';\psi)=c(\pi').
\]

\end{lem}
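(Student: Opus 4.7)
The plan is to adapt \cite[Lemma 6.1]{O} to the archimedean setting using the machinery developed in Sections 6 and \ref{sec:Asai}. The equality $\gamma(\tfrac{1}{2},\pi\times\pi',\psi)=c(\pi')$ should emerge from unfolding a single Rankin--Selberg integral in two different ways and matching the results.

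The first step is to pick $\pi$ suitably. By Lemma \ref{lem:fast decay} there is an irreducible, unitarizable, generic, $G_{n}(\RR)$-distinguished representation $\pi$ of $G_{n}(\CC)$ whose decay is faster than a sufficiently large $N$. For such a $\pi$, Lemma \ref{lem:integral representation} provides, for each $W\in\cW(\pi,\psi)$, a function $f\in\cC_{G_{n}(\RR)}(\pi)$ with $W(g)=\int_{U_{n}(\RR)\backslash U_{n}(\CC)}f(ug)\psi^{-1}(u)\,du$, and Lemma \ref{lem:Whittaker + Asai} supplies the absolute convergence needed for Fubini. Substituting the integral representation into $\Psi(s,W,W',\Phi)$ at $s=\tfrac{1}{2}$, the factor $\psi^{-1}(u)$ is absorbed by the left $U_{n}(\CC)$-transformation law of $W'$; the maps $g\mapsto\Phi(e_{n}g)$ and $g\mapsto|\det g|_{\CC}$ are right $U_{n}(\CC)$-invariant, and $f$ is left $G_{n}(\RR)$-invariant. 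Hence the combined integration over $U_{n}(\RR)\backslash U_{n}(\CC)\times U_{n}(\CC)\backslash G_{n}(\CC)\cong U_{n}(\RR)\backslash G_{n}(\CC)$ fibers over $G_{n}(\RR)\backslash G_{n}(\CC)$ with fibre $U_{n}(\RR)\backslash G_{n}(\RR)$, yielding
\[
\Psi\bigl(\tfrac{1}{2},W,W',\Phi\bigr)=\int_{G_{n}(\RR)\backslash G_{n}(\CC)}f(g)\,|\det g|_{\CC}^{1/2}\,Z\bigl(1,\pi'(g)W',R(g)\Phi|_{\RR^{n}}\bigr)\,dg.
\]

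Next I apply the Rankin--Selberg functional equation (Theorem \ref{thm: Jacquet func_eqn}),
\[
\Psi\bigl(\tfrac{1}{2},\tilde{W},\tilde{W'},\hat{\Phi}\bigr)=\omega_{I(\chi)}(-1)^{n-1}\gamma\bigl(\tfrac{1}{2},\pi\times\pi',\psi\bigr)\,\Psi\bigl(\tfrac{1}{2},W,W',\Phi\bigr),
\]
and note that $\omega_{I(\chi)}(-1)^{n-1}=1$ since distinguishedness of $\pi$ forces its central character to be trivial on $-1\in\RR^{\times}$. Performing the same unfolding on the left hand side---using the integral representation of Whittaker functions for the contragredient $\tilde{\pi}$, which shares the fast decay property---produces an integral involving $\widetilde{\pi'(g)W'}$ and $\widehat{R(g)\Phi}|_{\RR^{n}}$ in place of $\pi'(g)W'$ and $R(g)\Phi|_{\RR^{n}}$, together with an outer function $\tilde{f}$ canonically attached to $\tilde{W}$. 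Applying the Asai functional equation of Section \ref{sec:Asai} in its $G_{n}(\CC)$-translated form (obtained from $Z(1,\tilde{W'},\hat{\Phi}|_{\RR^{n}})=c(\pi')Z(1,W',\Phi|_{\RR^{n}})$ by a change of variables in $h$) rewrites the inner Asai integral as $c(\pi')$ times $Z(1,\pi'(g')W',R(g')\Phi|_{\RR^{n}})$ for an appropriate reparametrization $g\mapsto g'$. A further change of variables in the outer integral, using the left $G_{n}(\RR)$-invariance of $\tilde{f}$, identifies the resulting expression with $c(\pi')\,\Psi(\tfrac{1}{2},W,W',\Phi)$. Cancelling this common Rankin--Selberg integral, which can be arranged to be nonzero by appropriate choice of $W,W',\Phi$, yields $\gamma(\tfrac{1}{2},\pi\times\pi',\psi)=c(\pi')$.

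The principal obstacle is analytic rather than combinatorial: justifying absolute convergence of every multiple integral and each Fubini swap in the archimedean setting, and carefully tracking how the Asai functional equation transforms under translation of the test data. The technical results of Sections 6 and \ref{sec:Asai}---existence of arbitrarily fast-decaying distinguished representations, the integral representation of Whittaker functions, the Asai convergence estimates of Lemma \ref{lem:Lapid integral}, and the joint Whittaker--Asai convergence of Lemma \ref{lem:Whittaker + Asai}---were developed precisely to supply these analytic inputs, so that the formal skeleton of the argument can faithfully mirror Offen's $p$-adic proof.
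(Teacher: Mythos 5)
Your proposal is correct and follows essentially the same line as the paper's proof: choose $\pi$ to be a fast-decaying distinguished representation via Lemma \ref{lem:fast decay}, pass to the $G_n(\RR)\backslash G_n(\CC)$ model through Lemma \ref{lem:integral representation}, unfold the Rankin--Selberg integral into an outer integral over $G_n(\RR)\backslash G_n(\CC)$ with inner Asai integrals, justify the Fubini swaps with Lemmas \ref{lem: RS convergence}, \ref{lem:Lapid integral}, \ref{lem:Whittaker + Asai}, invoke the Asai functional equation (Lemma in Section \ref{sec:Asai}) together with the identities $\widetilde{\pi'(g)W'}=\tilde{\pi}'({}^tg^{-1})\widetilde{W'}$ and $\widehat{R(g)\Phi}=|\det g|_\CC^{-1}R({}^tg^{-1})\hat\Phi$, and compare with Jacquet's functional equation. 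The paper's proof is exactly this, so no substantive difference.
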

Note that we already know that for $\pi,\pi'$ as in Lemma \ref{lem:RS}
we have $\gamma(\frac{1}{2},\pi\times\pi',\psi)=1$. As a result,
the equality $c(\pi')=1$ follows.\\
The proof of Lemma \ref{lem:RS} is same to the proof of \cite[Lemma
6.1]{O}. However, in the archimedean case, there are convergence
issues thatwe need to check.
\begin{proof}[Proof of Lemma \ref{lem:RS}] Let $W\in\mathcal{W}(\pi,\psi),\, W'\in\mathcal{W}(\pi',\psi^{-1})$
and $\Phi\in\mathcal{S}(\CC^{n})$. The idea is to prove an equality
of Rankin-Selberg integrals of the type
\begin{equation}
\Psi(\frac{1}{2},\tilde{W},\tilde{W'};\hat{\Phi})=c(\pi')\Psi(\frac{1}{2},W,W';\Phi).\label{eq:Rankin-Selberg ints}
\end{equation}
Actually, it is enough to prove such an equality for a pair of functions
$W\in\mathcal{W}(\pi,\psi)$ and $W'\in\mathcal{W}(\pi',\psi)$ such
that at least one of the integrals $\Psi(\frac{1}{2},\tilde{W},\tilde{W'};\hat{\Phi}),\Psi(\frac{1}{2},W,W';\Phi)$
is nonzero ( and thus both integrals are nonzero).\\
We will obtain the necessary convergence estimates for every $K_{n}$-finite
function $W\in\mathcal{W}(\pi,\psi)$ and every function $W'\in\mathcal{W}(\pi',\psi)$.
By our classification of $G_{n}(\RR)$-distinguished representations
of $G_{n}(\CC)$, the central character $\omega_{\pi}$ of $G_{n}(\RR)$-distinguished
representation satisfies $\omega_{\pi}(-1)=1$. Thus, by Theorem \ref{thm: Jacquet func_eqn}, we have an equality
\[
\Psi(1-s,\tilde{W},\tilde{W'};\hat{\Phi})=\gamma(s,\pi\times\pi',\psi)\Psi(s,W,W';\Phi).
\]
Let $f\in\mathcal{C}_{G_{n}(\RR)}(\pi)$ be such that
\[
W(g)=\int\limits _{U_{n}(\RR)\backslash U_{n}(\CC)}f(ug)\psi^{-1}(u)du.
\]
We will prove the absolute convergence of the following integrals
at $s=\frac{1}{2}$:
\begin{equation}
\begin{split}&\int\limits _{U_{n}(\CC)\backslash G_{n}(\CC)} |W(g)W'(g)\Phi((0,0,...,0,1)g)||\det g|_{\CC}^{s}dg\le\\
&\int\limits _{U_{n}(\CC)\backslash G_{n}(\CC)}\left( \int\limits _{U_{n}(\RR)\backslash U_{n}(\CC)}|f(ug)|du\right) |W'(g)\Phi((0,0,...,0,1)g)||\det g|_{\CC}^{s}dg\\
&=\int\limits _{U_{n}(\CC)\backslash G_{n}(\CC)}|f(g)W'(g)\Phi((0,0,...,0,1)g)||\det g|_{\CC}^{s}dg\\
&=\int\limits _{G_{n}(\RR)\backslash G_{n}(\CC)}|f(g)||\det g|_{\CC}^{s}\left( \int\limits _{U_{n}(\RR)\backslash G_{n}(\RR)}|W'(hg)\Phi((0,0,...,0,1)hg)||\det h|_{\RR}^{2s}dh \right)dg.
\end{split}
\label{eq: convergence}
\end{equation}
Indeed, the left-hand side of (\ref{eq: convergence}) is absolutely
convergent by Lemma \ref{lem: RS convergence} and the integrals in
the right-hand side of (\ref{eq: convergence}) are absolutely convergent
by Lemmas \ref{lem:Lapid integral} and \ref{lem:Whittaker + Asai}.
Using absolute convergence for $s=\frac{1}{2}$ of the integrals appearing
in (\ref{eq: convergence}) we obtain the equality (\ref{eq:Rankin-Selberg ints})by
the following argument. We have
\[
\begin{split}
\Psi(\frac{1}{2},W,W';\Phi) & =\int\limits _{U_{n}(\CC)\backslash G_{n}(\CC)}W(g)W'(g)\Phi((0,0,...,0,1)g)|\det g|_{\CC}^{\frac{1}{2}}dg\\
 & =\int\limits _{U_{n}(\CC)\backslash G_{n}(\CC)}\left(\int\limits _{U_{n}(\RR)\backslash U_{n}(\CC)}f(ug)\psi_{n}^{-1}(u)du\right)W'(g)\Phi((0,0,...,0,1)g)||\det g|_{\CC}^{\frac{1}{2}}dg\\
 & =\int\limits _{U_{n}(\CC)\backslash G_{n}(\CC)}f(g)W'(g)\Phi((0,0,...,0,1)g)|\det g|_{\CC}^{\frac{1}{2}}dg\\
 & =\int\limits _{G_{n}(\RR)\backslash G_{n}(\CC)}f(g)|\det g|_{\CC}^{\frac{1}{2}}\left(\int\limits _{U_{n}(\RR)\backslash G_{n}(\RR)}W'(hg)\Phi((0,0,...,0,1)hg)|\det h|_{\RR}dh\right)dg\\
 & \int\limits _{G_{n}(\RR)\backslash G_{n}(\CC)}f(g)|\det g|_{\CC}^{\frac{1}{2}}Z(1,\pi'(g)W',\Phi(\cdot g)|_{\RR^{n}})dg.
\end{split}
\]
Define $f^{*}(g):=f(^{t}g^{-1})$. Clearly, $f^{*}\in\cC_{G_{n}(\RR)}(\tilde{\pi}).$
Applying the change of variables $u\to w_{n}\hspace{1mm}^{t}u^{-1}w_{n}^{-1}$,
and the fact that$f(w_{n}g)=f(g)$, it follows from the definitions
that
\[
\tilde{W}(g)=\int\limits _{U_{n}(\RR)\backslash U_{n}(\CC)}f^{*}(ug)\psi(u)du.
\]
The same computation applied to $\tilde{\pi}$ and $\tilde{\pi}'$
yields
\[
\Psi(\frac{1}{2},\tilde{W},\tilde{W}';\hat{\Phi})=\int\limits _{G_{n}(\RR)\backslash G_{n}(\CC)}f^{*}(g)|\det g|_{\CC}^{\frac{1}{2}}Z(1,\tilde{\pi}'(g)\tilde{W}',\hat{\Phi}(\cdot g)|_{\RR^{n}})dg.
\]
It follows that
\[
Z(1,\tilde{\pi}'(^{t}\hspace{-1mm}g^{-1})\tilde{W}',\hat{\Phi}(\cdot^{t}\hspace{-1mm}g^{-1})|_{\RR^{n}})=c(\pi')|\det g|_{\CC}Z(1,\pi'(g)W',\Phi(\cdot g)|_{\RR^{n}}).
\]
Finally, we obtain
\[
\Psi(\frac{1}{2},\tilde{W},\tilde{W}';\hat{\Phi})=c(\pi')\Psi(\frac{1}{2},W,W';\Phi)
\]
for every $K_{n}$-finite functions $W\in\cW(\pi,\psi)$ , $W'\in\cW(\pi',\psi^{-1})$
and every $\Phi\in\cS(\CC^{n})$. It is well-known that there exists
$K_{n}$-finite $W\in\cW(\pi,\psi),\, W'\in\cW(\pi',\psi^{-1})$ such
that $\Psi(\frac{1}{2},W,W';\Phi)\ne0$. It follows that $c(\pi')=\gamma(\frac{1}{2},\pi\times\pi';\psi)$. \end{proof}
\end{section} 
%
\appendix
\section{Generic Langlands quotient}
In this section we sketch a proof of the well-known fact that the Langalnds quotient of
$I(\chi)$ is generic if and only if $I(\chi)$ is irreducible. This fact follows from the papers of Kostant \cite{Kostant} and
Vogan \cite{Vogan}. For the convenience of the reader we rewrite it
here.
Similar results for $GL_n(\RR)$ were obtained by Casselman and Zuckerman.  \\
Let $\gotG = M_n(\CC)$ be the Lie algebra of $G_n(\CC)$ and let $K$ be the standard maximal compact subgroup of $G_n(\CC)$.
\begin{defn*}
An irreducible $(\gotG,K)$-module $X$ is called large if its annihilator in the universal enveloping algebra $U(\gotG)$
is a minimal primitive ideal.
We will say that a smooth irreducible representation $(\pi,V)$ of $G_n(\CC)$ is large if the corresponding $(\gotG,K)$ module
consisting of $K$-finite vectors in $V$ is large as a $(\gotG,K)$-module.
\end{defn*}
Let $\chi = (\chi_1,\chi_2, ..., \chi_n)$ be a character of $B_n(\CC)$ and suppose $|\chi_j(t)| = |t|^{\lambda_j}$ with
$\lambda_1 \ge \lambda_2 \ge ... \ge \lambda_n$.
By  \cite[Theorem 6.2]{Vogan}, if $(\sigma,W)$ is an irreducible subrepresentation of $I(\chi)$ then $(\sigma,W)$ is large. Suppose $(\pi,V)$ is
the Langlands quotient of $I(\chi)$ and suppose $(\pi,V)$ is generic.
Then by Kostant theorem $(\pi,V)$ is large.
On the other hand, \cite[Corollary 6.7]{Vogan} states that there is a unique large composition factor in the composition series for
$I(\chi)$. We obtain $(\pi,V) \simeq (\sigma,W)$ and thus $ (\sigma,W) = I(\chi)$, that is, $I(\chi)$ is an irreducible representation.
%
\section{Gamma Factors: Converse direction}

In this section we make some observations on the problem of the following
type. Fix a smooth, irreducible, generic and admissible representation
$(\pi,V)$ of $G_{n}(\CC)$. Suppose we know that
\begin{equation}
\gamma\left(\frac{1}{2},\pi\times\pi',\psi\right)=1\label{eq: appendix}
\end{equation}
for every $m\le k$ and every smooth irreducible $G_{m}(\RR)$-distinguished
representation $(\pi',V')$ of $G_{m}(\CC)$. What should be the minimal
$k$ such that (\ref{eq: appendix}) implies that $(\pi,V)$ is $G_{n}(\RR)$-distinguished?
In this section we give an answer to this question in the case $(\pi,V)$
is a unitary representation.\\
In the following two theorems we prove that $k=1$ is enough. Theorem
\ref{thm: converse A} is a particular case of Theorem
\ref{thm:converse B}. Nevertheless we state it and
prove it since the proof of Theorem \ref{thm: converse A} is simpler then the proof of Theorem \ref{thm:converse B} and in my opinion understanding it simplifies understanding
of the proof of Theorem \ref{thm:converse B}.
\begin{thm}
\label{thm: converse A} Let $\chi=(\chi_{1},\chi_{2},...,\chi_{n})$
be a unitary character of $B_{n}$ and suppose $\chi_{j}(z)=|z|_{\CC}^{s_{j}}(z/|z|)^{k_{j}}$
with $s_{j}$ purely imaginary and $k_{j}\in\ZZ$ for every $1\le j\le n$.
Suppose $(\pi,V)=I(\chi)$ is a smooth, generic and irreducible representation
of $G_{n}(\CC)$. Suppose
\[
\gamma\left(\frac{1}{2},\pi\times\chi',\psi\right)=1
\]
 for every $\RR^{\times}$-distinguished unitary character $\chi':\CC^{\times}\to\RR^{\times}$.
Then there exists an involution $w\in S_{n}$ such that $w\chi=\overline{(\chi^{-1})}$.
Moreover, one can find an involution $w\in S_{n}$ such that $w\chi=\overline{(\chi^{-1})}$
and also for every fixed point $w(i)=i$ the integer $k_{i}$ is even.\end{thm}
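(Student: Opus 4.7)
The plan is to convert the gamma factor hypothesis into a multiset constraint on the parameters $(s_j,k_j)$. First I would expand the Rankin--Selberg gamma factor as a product
\[
\gamma(\tfrac12,\pi\times\chi',\psi)=\prod_{j=1}^{n}\gamma(\tfrac12,\chi_{j}\chi',\psi)
\]
and invoke the explicit Tate formula, which for $\chi'_{m}(z)=(z/|z|)^{2m}$ gives
\[
\gamma(\tfrac12,\chi_{j}\chi'_{m},\psi)=i^{|k_{j}+2m|}\,\frac{\Gamma_{\CC}(\tfrac12-s_{j}+|k_{j}+2m|/2)}{\Gamma_{\CC}(\tfrac12+s_{j}+|k_{j}+2m|/2)},
\]
with $\Gamma_{\CC}(z)=2(2\pi)^{-z}\Gamma(z)$.

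Next I would study the ratio $\Phi(m+1)/\Phi(m)$ of the full product across consecutive $m$. Using $\Gamma_{\CC}(z+1)=(z/(2\pi))\Gamma_{\CC}(z)$ one checks, uniformly across the sign cases for $k_{j}+2m$ and the boundary values $0,-1$, that
\[
\frac{\gamma(\tfrac12,\chi_{j}\chi'_{m+1},\psi)}{\gamma(\tfrac12,\chi_{j}\chi'_{m},\psi)}=-\frac{k_{j}+2m+1-2s_{j}}{k_{j}+2m+1+2s_{j}}.
\]
The hypothesis $\Phi\equiv1$ therefore gives, as a rational function identity in $m$,
\[
(-1)^{n}\prod_{j}(k_{j}+2m+1-2s_{j})=\prod_{j}(k_{j}+2m+1+2s_{j}).
\]
Matching leading coefficients forces $n$ to be even, and equating root multisets yields $\{k_{j}-2s_{j}\}=\{k_{j}+2s_{j}\}$. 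Splitting real and imaginary parts and using $s_{j}\in i\RR$ then produces an involution $\sigma\in S_{n}$ with $k_{\sigma(j)}=k_{j}$ and $s_{\sigma(j)}=-s_{j}$.

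Then I would use $\sigma$ to regroup the full product. On each $\sigma$-2-orbit $\{j,\sigma(j)\}$, the $\Gamma_{\CC}$ ratios are complex conjugates and cancel, leaving $(-1)^{|k_{j}+2m|}$; each $\sigma$-fixed index (which forces $s_{j}=0$) contributes $i^{|k_{j}+2m|}$. Letting $M_{k}$ denote the multiplicity of $k$ in $\{k_{1},\ldots,k_{n}\}$, this gives $\Phi(m)=i^{F(m)}$ with $F(m)=\sum_{k}M_{k}|k+2m|$. The hypothesis $F(m)\equiv0\pmod 4$ for all $m\in\ZZ$, together with the identity
\[
F(m+1)-F(m)=2\Bigl(\sum_{k\ge-2m}M_{k}-\sum_{k\le-2m-2}M_{k}\Bigr)
\]
and the finite support of $k\mapsto M_{k}$, forces $M_{k}$ to be even whenever $k$ is odd by descending induction on $k$. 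A refinement, splitting the gamma product by conjugate pairs $\{(s,k),(-s,k)\}$ and applying the ratio identity separately to each, sharpens this to: the multiplicity $n_{(s,k)}$ of every character $(s,k)$ with $k$ odd is even.

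Finally I would construct $w$ by pairing: for each character $\mu=(s,k)$ with $k$ odd, the $n_{\mu}$ indices realizing $\mu$ split into $n_{\mu}/2$ transpositions; for each $\mu$ with $k$ even, pair the indices arbitrarily, leaving any unmatched ones as fixed points. Because each $\chi_{j}$ is unitary one has $\overline{\chi_{j}^{-1}}=\chi_{j}$, so the resulting involution satisfies $w\chi=\overline{\chi^{-1}}$, and every fixed point of $w$ has $k_{i}$ even by construction. The main obstacle is the refinement step: upgrading ``$M_{k}$ even for odd $k$'' to ``$n_{(s,k)}$ even for each individual $(s,k)$ with odd $k$'' requires exploiting the ratio identity in a sharper way, separating contributions by the value of $s$ rather than only by $k$, and is the delicate heart of the proof.
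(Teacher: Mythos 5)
Your overall strategy --- expanding the Rankin--Selberg gamma factor into a product of Tate gamma factors, comparing the product at consecutive integers $m$, extracting a polynomial identity in $m$, matching roots to build an involution, and then exploiting the residual sign factor to control the fixed points --- is precisely what the paper does. The execution, however, has genuine errors.

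The Tate formula you quote carries the wrong epsilon factor for the additive character $\psi(x)=e^{\pi(x-\bar x)}$ used here. With this $\psi$ the sign is not $i^{|m|}$ but the paper's $\epsilon_m\in\{1,-1\}$, which equals $-1$ only when $m$ is odd and negative. Your formula is falsified by a direct test: it gives $\gamma\left(\tfrac12,(z/|z|)^2,\psi\right)=i^2=-1$, contradicting Theorem~\ref{thm: gamma factor} applied to $\pi$ trivial and $\pi'=(z/|z|)^2$, both of which are distinguished. The error propagates: your per-slot ratio acquires a spurious overall $-1$, so the identity becomes $(-1)^n\prod_j(k_j+2m+1-2s_j)=\prod_j(k_j+2m+1+2s_j)$, which would force $n$ to be even --- but the theorem holds for all $n$. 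With the correct $\epsilon_m$ there is no extra sign in the ratio for $|m|$ large, the root-matching step proceeds for every $n$, and the remaining sign factor is $p_{m,1}=\prod_j\epsilon_{2m+k_j}$, whose constancy forces $M_k$ to be even for every odd $k$.

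Your final construction of $w$ is also flawed. You pair indices realizing the same parameter $\mu=(s,k)$, justified by ``unitarity gives $\overline{\chi_j^{-1}}=\chi_j$''. This is false. In the paper's convention $\bar\chi(z)=\chi(\bar z)$, so $\overline{\chi_j^{-1}}$ has parameters $(-s_j,k_j)$, not $(s_j,k_j)$; unitarity only makes $s_j$ purely imaginary, it does not make $s_j=0$. For $s_j\neq0$, $w$ must therefore send an index carrying $(s_j,k_j)$ to one carrying $(-s_j,k_j)$ --- which is exactly what the involution $\sigma$ from your root-matching step already does, and you should simply keep it on those indices.

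Once this is corrected, the ``refinement obstacle'' you flag at the end evaporates. Indices with $s_j\neq0$ and $k_j=k$ come in $\sigma$-pairs, so their count is even; since $M_k$ is even for odd $k$, the count of indices with $(s_j,k_j)=(0,k)$, $k$ odd, is also even, and these can be paired among themselves, leaving no fixed point with odd $k$. You do not need ``$n_{(s,k)}$ even for every $(s,k)$ with $k$ odd.'' The paper's own argument is the same in substance: after producing $\sigma$ it normalizes so that no two fixed points share the same $k$, and then rules out a fixed point with odd $k_i$ by examining the sign flip of $\epsilon_{2m+k_i}$ across $m=\tfrac{-k_i-1}{2}$.
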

\begin{proof}
Observe that every $\RR^{\times}$-distinguished unitary character
$\chi':\CC\to\RR^{\times}$ is of the form $\chi(z)=\left(z/|z|\right)^{2m}$
for $m\in\ZZ$. By \cite[Lemma 16.3]{Jacquet}
we have
\[
\gamma\left(\frac{1}{2},Ind(\chi)\times\chi',\psi\right)=\prod\limits _{i=1}^{n}\gamma\left(\frac{1}{2},\chi_{i}\chi',\psi\right),
\]
 where $\gamma\left(\frac{1}{2},\chi_{i}\chi',\psi\right)$ is the
one-dimensional Tate's gamma factor. Following Tate denote $c_{m}(z)=(z/|z|)^{m}$
and recall that the Tate gamma factor is given by
\[
\gamma\left(s,c_{m},\psi\right)=\epsilon_{m}\frac{(2\pi)^{1-s}\Gamma\left(s+\frac{|m|}{2}\right)}{(2\pi)^{s}\Gamma\left((1-s)+\frac{|m|}{2}\right)},
\]
where
\[
\epsilon_{m}=\begin{cases}
1 & m\text{\, is even or }m>0\\
-1 & m\,\text{is odd and }m<0
\end{cases}.
\]
Let's rewrite the equality $\gamma\left(\frac{1}{2},Ind(\chi)\times\chi',\psi\right)=1$
as
\begin{equation}
\prod\limits _{i=1}^{n}\epsilon_{2m+k_{i}}\frac{(2\pi)^{\frac{1}{2}-s_{i}}}{(2\pi)^{\frac{1}{2}+s_{i}}}\frac{\Gamma\left(\frac{1}{2}+s_{i}+\frac{|k_{i}+2m|}{2}\right)}{\Gamma\left(\frac{1}{2}-s_{i}+\frac{|k_{i}+2m|}{2}\right)}=1\label{eq: prod_gamma}
\end{equation}
for every $m\in\ZZ$. The product in (\ref{eq: prod_gamma}) breaks
into 3 products:
\[
p_{m,1}=\prod\limits _{i=1}^{n}\epsilon_{2m+k_{i}},
\]

\[
p_{m,2}=\prod\limits _{i=1}^{n}\frac{(2\pi)^{\frac{1}{2}-s_{i}}}{(2\pi)^{\frac{1}{2}+s_{i}}}=(2\pi)^{-2s_{1}-2s_{2}-...-2s_{n}},
\]
and
\[
p_{m,3}=\prod\limits _{i=1}^{n}\frac{\Gamma\left(\frac{1}{2}+s_{i}+\frac{|k_{i}+2m|}{2}\right)}{\Gamma\left(\frac{1}{2}-s_{i}+\frac{|k_{i}+2m|}{2}\right)}.
\]
Note that the term $p_{m,2}$ is constant (does not depend on $m$)
and the term $p_{m,1}$ stabilizes, that is $p_{m,1}=p_{m+1,1}$ for
large enough and for small enough $m$. Also, we have $|k_{i}+m|=k_{i}+m$
for $m$ large enough. Let us take $m$ large enough and look at the
expression $$\frac{p_{m+1,1}p_{m+1,2}p_{m+1,3}}{p_{m,1}p_{m,2}p_{m,3}}.$$
By our assumption this fraction equals $1$ for every $m$. For $m$
large enough we have $p_{m+1,1}p_{m+1,2}=p_{m,1}p_{m,2}$, so $\frac{p_{m+1,3}}{p_{m,3}}=1$.
By the functional equation $\Gamma(z+1)=z\Gamma(z)$ we obtain
\[
1=\frac{p_{m+1,3}}{p_{m,3}}=\prod\limits _{i=1}^{n}\frac{\left(\frac{1}{2}+s_{i}+\frac{k_{i}+2m}{2}\right)}{\left(\frac{1}{2}-s_{i}+\frac{k_{i}+2m}{2}\right)}.
\]
Thus,
\[
\prod\limits _{i=1}^{n}\left(\frac{1}{2}+s_{i}+\frac{k_{i}+2m}{2}\right)=\prod\limits _{i=1}^{n}\left(\frac{1}{2}-s_{i}+\frac{k_{i}+2m}{2}\right)
\]
for large enough $m\in\ZZ$. Since both sides are polynomials in $m$,
the polynomials are equal. As a consequence, the zeros of these two
polynomials coincide, that is, for every $1\le i\le n$ there exists
$1\le j\le n$ such that $$\frac{1}{2}-s_{i}+\frac{k_{i}}{2}=\frac{1}{2}+s_{j}+\frac{k_{j}}{2}.$$
By our assumption $s_{i}$'s are purely imaginary and $k_{i}$'s are
integers. Thus, $-s_{i}=s_{j}$ and $k_{i}=k_{j}$. Note that $\bar{s_{i}}=-s_{i}$
and this means exactly that we can define $w(i)=j$ and $w(j)=i$
and $\chi_{j}=\bar{\chi_{i}}^{-1}=\chi_{w(i)}$. Therefore there exists
an involution $w\in S_{n}$ such that $w(\chi)=\bar{\chi}^{-1}$.\\
From the proof of the existence of an involution $w$ it follows also
that $\sum_{j=1}^{n}s_{j}=0$ and that the products $p_{m,2}=1$ and
$p_{m,3}=1$ for every $m\in\ZZ$. This establishes the existence
of an involution $w\in S_{n}$ such that $w(\chi)=\bar{\chi}^{-1}$.
It remains to establish the second property - existence of such an
involution such that in addition for every fixed point $w(j)=j$ of
the involution the corresponding integer $k_{j}$ is even. Note that
if $i$ is a fixed point of $w$ then $s_{i}=0$. Without loss of
generality assume that if $w(i)=i$ and $w(j)=j$, then $k_{i}\ne k_{j}$.
Otherwise we can define an involution $w'$ by $w'(i)=j,w'(j)=i$
and $w'(l)=w(l)$ for $l\ne i,j$ and the new involution $w'$ also
satisfies $w'(\chi)=\bar{\chi}^{-1}$.\\
Assume on the contrary that $w(i)=i$ but $k_{i}$ is odd. Then take
two consecutive products $p_{m,1}$ and $p_{m+1,1}$ for $m=\frac{-k_{i}-1}{2}$.
Observe that $\epsilon_{2m+k_{i}}=-\epsilon_{2(m+1)+k_{i}}$ and the
other terms appearing in the products $p_{m,1}$ and $p_{m+1,1}$
equal each other respectively. As a consequence, $p_{m+1,1}=-p_{m,1}$.
But from the written above we have $p_{m,2}=p_{m+1,2}=1$ and also
$p_{m,3}=p_{m+1,3}=1$ and thus $p_{m+1,1}=p_{m,1}=1$. Contradiction!\\
Therefore, if $w(i)=i$ then the integer $k_{i}$ is even, that is
$\chi_{i}(-1)=1$.
\end{proof}
A small modification of this proof gives a stronger theorem
\begin{thm}
\label{thm:converse B} Let $\chi=(\chi_{1},\chi_{2},...,\chi_{n})$
be a character of $B_{n}$ and suppose $\chi_{j}(z)=|z|_{\CC}^{s_{j}}(z/|z|)^{k_{j}}$
with $-\frac{1}{2}<Re(s_{j})<\frac{1}{2}$ and $k_{j}\in\ZZ$ for
every $1\le j\le n$. Suppose $(\pi,V)=I(\chi)$ is a smooth, generic,
irreducible representation of $G_{n}(\CC)$. Suppose
\[
\gamma\left(\frac{1}{2},\pi\times\chi',\psi\right)=1
\]
for every $\RR^{\times}$-distinguished unitary character $\chi':\CC^{\times}\to\RR^{\times}$.
Then there exists an involution $w\in S_{n}$ such that $w\chi=\overline{(\chi^{-1})}$.
Moreover, one can find an involution $w\in S_{n}$ such that $w\chi=\overline{(\chi^{-1})}$
and also for every fixed point $w(i)=i$ the integer $k_{i}$ is even.\end{thm}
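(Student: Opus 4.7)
The plan is to adapt the proof of Theorem~\ref{thm: converse A} to the strip $-\tfrac{1}{2}<Re(s_j)<\tfrac{1}{2}$, modifying only the combinatorial step that extracts the involution from the root-matching identity. First I would expand
\[
\gamma\left(\tfrac{1}{2},\pi\times\chi',\psi\right)=\prod_{i=1}^{n}\gamma\left(\tfrac{1}{2},\chi_i \chi',\psi\right)
\]
for $\chi'(z)=(z/|z|)^{2m}$ via \cite[Lemma 16.3]{Jacquet}, split the product into the three pieces $p_{m,1},p_{m,2},p_{m,3}$ exactly as in the proof of Theorem~\ref{thm: converse A}, and observe that $p_{m,2}=(2\pi)^{-2\sum s_i}$ is constant in $m$ while $p_{m,1}\in\{\pm1\}$ stabilizes for $|m|$ large. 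The hypothesis $p_{m,1}p_{m,2}p_{m,3}=1$ for all $m$ then forces $p_{m+1,3}/p_{m,3}=1$ for large $m$; using $\Gamma(z+1)=z\Gamma(z)$ this translates to the polynomial identity
\[
\prod_{i=1}^{n}\left(\tfrac{1}{2}+s_i+\tfrac{k_i+2m}{2}\right)=\prod_{i=1}^{n}\left(\tfrac{1}{2}-s_i+\tfrac{k_i+2m}{2}\right)
\]
in $m$, and equating the multisets of roots gives $\{-s_i-k_i/2\}_{i=1}^n=\{s_i-k_i/2\}_{i=1}^n$ in $\CC$.

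The central new task is to extract from this multiset equality an involution $w\in S_n$ with $s_{w(i)}=-s_i$ and $k_{w(i)}=k_i$. A matched pair now gives only the complex relation $s_i+s_{\sigma(i)}=(k_{\sigma(i)}-k_i)/2$; unlike in Theorem~\ref{thm: converse A} the real part on the right is not forced to vanish, but the bound $|Re(s_j)|<\tfrac{1}{2}$ does force $(k_{\sigma(i)}-k_i)/2\in(-1,1)$, so $k_{\sigma(i)}-k_i\in\{-1,0,1\}$. The saving observation is that \emph{any} involutive matching $\sigma$ automatically satisfies $k_{\sigma(i)}=k_i$ (and hence $s_{\sigma(i)}=-s_i$): applying the matching identity once to $i$ and once to $\sigma(i)$ and adding yields $(k_{\sigma(i)}-k_i)/2=(k_i-k_{\sigma(i)})/2$. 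So it suffices to produce \emph{some} involutive matching. I would do this by a combinatorial argument stratifying the indices by the real value $Re(s_i)+k_i/2$ and showing, using the strip bound, that the multiset equality restricted to each stratum encodes a $(s,k)\mapsto(-s,k)$ symmetry of the data that admits an involutive pairing. The hard part will be handling the crossings between adjacent strata (the $k_{\sigma(i)}-k_i=\pm1$ case), which I would resolve using the further constraint that $p_{m,1}$ is constant in $m$ for every $m\in\ZZ$ (not just for large $m$), equivalent to the condition that each odd value of $k$ appears with even multiplicity among the $k_i$'s; this extra input is itself a consequence of $p_{m,1}p_{m,2}p_{m,3}=1$ combined with the fact that $p_{m,2}p_{m,3}$ is constant.

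Once $w$ with $w\chi=\overline{\chi^{-1}}$ is in hand, the parity assertion at fixed points follows verbatim from the last paragraph of the proof of Theorem~\ref{thm: converse A}: assuming $w(i)=i$ with $k_i$ odd and taking $m=-(k_i+1)/2$, the factor $\epsilon_{2m+k_i}$ flips sign while the pairs $\{j,w(j)\}$ for $j\ne i$ contribute $\epsilon$-products invariant under $m\mapsto m+1$, giving $p_{m+1,1}=-p_{m,1}$; combined with $p_{m,2}$ and $p_{m,3}$ being constant in $m$ by the polynomial identity, this contradicts $p_{m,1}p_{m,2}p_{m,3}=1$, forcing $k_i$ to be even, i.e.\ $\chi_i(-1)=1$.
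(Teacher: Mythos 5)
Your reduction to the polynomial identity and the multiset-of-roots equality $\{-s_i-k_i/2\}_i=\{s_i-k_i/2\}_i$ is correct and identical to the paper's, and your ``saving observation'' --- that for any \emph{involutive} matching $\sigma$ the two equations $s_i+s_{\sigma(i)}=(k_{\sigma(i)}-k_i)/2$ and $s_{\sigma(i)}+s_i=(k_i-k_{\sigma(i)})/2$ together force $k_{\sigma(i)}=k_i$ and $s_{\sigma(i)}=-s_i$ --- is valid and tidy. The parity step at a fixed point is also essentially the paper's, and is sound once the involution and the identities $p_{m,2}=p_{m,3}=1$ are in hand.

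The genuine gap is the central combinatorial step: producing an involutive matching (equivalently, showing the multiset $\{(s_i,k_i)\}_i$ is invariant under $(s,k)\mapsto(-s,k)$). Your ``saving observation'' is essentially a tautology here --- it says that \emph{if} an involutive matching exists then it has the desired property, but the existence of one \emph{is} the desired property. Your stratification-by-$Re(s_i)+k_i/2$ plan is a sketch, and the proposed ingredient for handling the cross-stratum cases ($k_{\sigma(i)}-k_i=\pm1$) does not stand as stated: you claim $p_{m,1}$ is constant in $m$ as a consequence of $p_{m,2}p_{m,3}$ being constant, with $p_{m,3}$ ``constant by the polynomial identity''. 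But the polynomial identity does not by itself make $p_{m,3}$ constant --- because of the absolute value $|k_i+2m|$, the ratio $p_{m+1,3}/p_{m,3}$ can pick up signs exactly when some $k_i=-2m-1$, and these signs precisely cancel those in $p_{m+1,1}/p_{m,1}$, so the hypothesis $p_{m,1}p_{m,2}p_{m,3}=1$ gives no extra information. In the paper the constancy $p_{m,3}\equiv1$ is deduced \emph{after} the involution is found (by pairing up the $\Gamma$-factors via $w$), not before; using it as an input to find $w$ is circular. What closes the gap in the paper is a direct inductive argument: assume no pair $(i,j)$ with $s_i=-s_j$, $k_i=k_j$ exists, follow the orbit of an index under a complex matching bijection $\sigma$; at each step the strip bound forces $k$ to drop by exactly $1$ while the (real part of) $s$ stays positive, yielding a strictly decreasing sequence of $k$-values among finitely many indices, a contradiction; then one removes the resulting pair and inducts. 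That infinite-descent step is what you would need to supply.
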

\begin{proof}
By the same proof as in the previous theorem we obtain that for every
$1\le i\le n$ there exists $1\le j\le n$ such that $$\frac{1}{2}-s_{i}+\frac{k_{i}}{2}=\frac{1}{2}+s_{j}+\frac{k_{j}}{2}.$$
By subtracting $\frac{1}{2}$ from both sides of this equality and
taking real parts we can replace $s_{j}$ by $Re(s_{j})$. Thus we
can assume that for every $1\le i\le n$ we have $-\frac{1}{2}<s_{i}<\frac{1}{2}$
and also for every $1\le i\le n$ there exists $1\le j\le n$ such
that $-s_{i}+\frac{k_{i}}{2}=s_{j}+\frac{k_{j}}{2}$. Let us multiply
both sides of this equation by $2$ and replace $s_{i}$ by $2s_{i}$.
Therefore, we can assume that for every $1\le i\le n$ we have $-1<s_{i}<1$
and also for every $1\le i\le n$ there exists $1\le j\le n$ such
that $$-s_{i}+k_{i}=s_{j}+k_{j}.$$ Let us call this condition "antisymmetry
condition". The claim is that the "antisymmetry
condition" implies that there exists an involution $w\in S_{n}$
such that $w(\chi)=\bar{\chi}^{-1}$, that is, if $w(i)=j$ then $s_{i}=-s_{j}$
and $k_{i}=k_{j}$. The proof of the existence of an involution $w$
is by induction on $n$. Clearly, for $n=1$ the condition $-s_{1}+k_{1}=s_{1}+k_{1}$
gives us $s_{1}=0$ and thus the identity involution $w(1)=1$ works.
For a general $n$ it is enough that the "antisymmetry
condition" implies that there is a pair $i,j$ such
that $s_{i}=-s_{j}$ and $k_{i}=k_{j}$. Note that it can be that
$i=j$ and then $s_{i}=-s_{i}$ implies $s_{i}=0$.\\
Suppose on the contrary that there are $\{s_{i}\}_{i=1}^{n}\subset(-1,1)$
and $\{k_{i}\}_{i=1}^{n}\subset\ZZ$ that satisfy the "antisymmetry
condition", but there is no pair of indices $1\le i,j\le n$
that satisfy $s_{i}=-s_{j}$ and $k_{i}=k_{j}$. In particular, there
is some $1\le i\le n$ such that $-s_{1}+k_{1}=s_{i}+k_{i}$. By our
assumption $i>1$, so without loss of generality assume $i=2$. Let
us assume $s_{1}>0$. The proof in the case $s_{1}<0$ is similar
and $s_{1}=0$ is not possible by our assumption. We obtain $k_{1}-k_{2}=s_{1}+s_{2}$.
The left-hand side is an integer and $-1<s_{1}+s_{2}<2$. Thus $s_{1}+s_{2}=0$
or $s_{1}+s_{2}=1$. The case $s_{1}+s_{2}=0$ is not possible by
our assumption, thus $s_{1}+s_{2}=1$ and as a corollary $s_{2}>0$
and $k_{2}=k_{1}-1$. Similarly, there is some $1\le i\le n$ such
that $-s_{2}+k_{2}=s_{i}+k_{i}$. By the same argument we obtain $s_{i}>0$
and $k_{i}=k_{2}-1$. Thus $i\ne1,2$ and without loss of generality
we can assume $i=3$. Continuing in this manner we obtain an infinite
sequence of integers $k_{j}$ such that $k_{j}=k_{1}+(j-1)$. Contradiction!\\
Thus there is a pair of indices $1\le i,j\le n$ such that $s_{i}=-s_{j}$
and $k_{i}=k_{j}$. Removing them from our sequence of length $n$
we obtain a shorter sequence which satisfies the "antisymmetry
condition".\\
Thus, we have proved that there is an involution $w\in S_{n}$ such
that $w(\chi)=\bar{\chi}^{-1}$. The rest of the argument, that is,
the proof of the existence of an involution $w$ such that for every
fixed point $j$ of the involution the corresponding integer $k_{j}$
is even is the same as in the proof of the previous theorem.
\end{proof}
As a corollary, using the Tadic-Vogan classification of the unitary
dual of $G_{n}(\CC)$ we obtain the following
\begin{thm}
Let $\chi=(\chi_{1},\chi_{2},...,\chi_{n})$ be a character of $B_{n}$
and suppose $(\pi,V)=Ind(\chi)$ is smooth, generic, irreducible,
and unitary representation of $G_{n}(\CC)$. Suppose
\[
\gamma\left(\frac{1}{2},\pi\times\chi',\psi\right)=1
\]
 for every $\RR^{\times}$-distinguished unitary character $\chi':\CC^{\times}\to\RR^{\times}$.
Then there exists an involution $w\in S_{n}$ such that $w\chi=\overline{(\chi^{-1})}$.
Moreover, one can find an involution $w\in S_{n}$ such that $w\chi=\overline{(\chi^{-1})}$
and also for every fixed point $w(i)=i$ the integer $k_{i}$ is even.\end{thm}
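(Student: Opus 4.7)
The plan is to reduce the present theorem to Theorem \ref{thm:converse B} via the Tadic-Vogan classification of the unitary dual of $G_{n}(\CC)$. Theorem \ref{thm:converse B} already gives the desired conclusion under the extra hypothesis that each component $\chi_{j}(z)=|z|_{\CC}^{s_{j}}(z/|z|)^{k_{j}}$ has $-\frac{1}{2}<\mathrm{Re}(s_{j})<\frac{1}{2}$, so the only new content required here is the verification that this exponent bound is automatic when $\pi=\mathrm{Ind}(\chi)$ is in addition assumed to be unitary.

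To that end I would invoke the Tadic-Vogan classification, which asserts that every irreducible unitary representation of $G_{n}(\CC)$ is obtained as an irreducible parabolic induction from a product of building blocks, each of which is either a unitary character of $\CC^{\times}$ (forcing the corresponding exponent $s_{j}$ to be purely imaginary) or a $G_{2}(\CC)$-complementary series of the form $|z|_{\CC}^{s}\chi_{0}\otimes|z|_{\CC}^{-s}\chi_{0}$ with $\chi_{0}$ unitary and $0<s<\frac{1}{2}$. In both cases the constituent characters have exponent with real part strictly between $-\frac{1}{2}$ and $\frac{1}{2}$. Because the hypothesis already presents $\pi$ as the full principal series $\mathrm{Ind}(\chi)$ from the Borel, and because induction in stages lets one compare this Borel-induced realization with the one coming from the Tadic-Vogan building blocks, the characters $\chi_{1},\ldots,\chi_{n}$ of the hypothesis are, up to permutation, precisely these constituents, and in particular each satisfies $|\mathrm{Re}(s_{j})|<\frac{1}{2}$.

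With the exponent bound in hand, Theorem \ref{thm:converse B} applies verbatim and delivers the involution $w\in S_{n}$ with $w\chi=\overline{(\chi^{-1})}$ together with the parity condition that $k_{i}$ is even at each fixed point of $w$. The only subtle point, rather than a serious obstacle, is to make sure that the Borel-induced realization obtained from the Tadic-Vogan blocks agrees with the original datum $\chi$ of the hypothesis; this is unproblematic because parabolic induction from the Borel of $G_{n}(\CC)$ does not depend on the ordering of the inducing characters (as a representation up to isomorphism), so any permutation arising in the comparison merely conjugates $w$ inside $S_{n}$ and does not affect the existence of a suitable involution.
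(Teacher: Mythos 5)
Your proof is correct and takes essentially the same approach as the paper: both reduce the theorem to Theorem \ref{thm:converse B} by using the Tadic--Vogan classification of the unitary dual of $G_n(\CC)$ to deduce that unitarity of $\mathrm{Ind}(\chi)$ forces $-\frac{1}{2}<\mathrm{Re}(s_j)<\frac{1}{2}$ for each $j$. The paper simply cites Tadic's Theorem A for this exponent bound, whereas you unpack the classification into building blocks and complementary series, but the substance and structure of the argument are identical.
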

\begin{proof}
Let us denote $\chi_{j}(z)=|z|_{\CC}^{s_{j}}(z/|z|)^{k_{j}}$ , where
$s_{j}\in\CC$ and $k_{j}\in\ZZ$. The theorem follows from Theorem
\ref{thm:converse B} and the fact that unitaricity of $Ind(\chi)$
implies $-\frac{1}{2}<Re(s_{j})<\frac{1}{2}$ for every $1\le j\le n$,
see \cite[Theorem A]{Tadic}.
\end{proof}
Finally, by \cite[Theorem 3.3.6]{Pan} we know
that an irreducible tempered representation $(\pi,V)$ of $G_{n}(\CC)$
is $G_{n}(\RR)$-distinguished if and only if there exists an involution
$w\in S_{n}$ such that $w\chi=\overline{(\chi^{-1})}$ and also for
every fixed point $w(i)=i$ the integer $k_{i}$ is even. Therefore,
an irreducible tempered representation $(\pi,V)$ of $G_{n}(\CC)$
is $G_{n}(\RR)$-distinguished if and only if
\[
\gamma\left(\frac{1}{2},\pi\times\chi',\psi\right)=1
\]
 for every $\RR^{\times}$-distinguished unitary character $\chi':\CC^{\times}\to\RR^{\times}$. 
\small


\begin{thebibliography}{00}
\bibitem[AG]{AG}
A. Aizenbud, D. Gourevitch, Schwartz functions on Nash manifolds,
 Int. Math. Res. Not., 5 (2008), rnm155.
\bibitem[AG2]{AG2}
A. Aizenbud\ and \ D. Gourevitch, Generalized Harish-Chandra descent, Gelfand pairs, and an archimedean
analog of Jacquet-Rallis's theorem,  Duke Mathematical J. , 2009 , vol. 149 , no. 3, 509-567.
\bibitem[AGS]{AGS}
A. Aizenbud, D. Gourevitch, E. Sayag, $(\GL_{n+1}(F),\GL_n(F))$ is a Gelfand pair for any local field $F$,
{\it Compositio Math} 2008,vol. 144, 1504-1524.
\bibitem[AL]{AL} A. Aizenbud, E. Lapid
Appendix D of  [FLO]: Distinguished representations in the Archimedean case.
\bibitem[CHH]{Cowling-Haag-H}
M., Cowling, U. Haagerup, R. Howe, Almost $L^2$ matrix coefficients,
Journal f\"{u}r die reine und angewandte Mathematik 387 (1988), 97-110.
\bibitem[CF]{Cassels-Frohlich}
J. W. S. Cassels, A. Fr\"{o}hlich, Algebraic number theory, Washington DC (1967).
\bibitem[Flen]{Flensted-Jensen}
M. Flensted-Jensen, Discrete series for semisimple symmetric spaces, Annals of Mathematics (1980), 253-311.
\bibitem[Fli1]{Flicker1}
Y. Flicker, Twisted tensors and Euler products; Bulletin de la Soci\'et\'e Math\'ematique de France 116 (1988), 295-313.
\bibitem[Fli2]{Flicker2}
Y. Flicker, On zeroes of the twisted tensor L-function; Mathematische Annalen 297 (1993), 199-219.
\bibitem[FLO]{FLO}
B. Feigon, E. Lapid, O. Offen,
On representations distinguished by unitary groups.
Publ. Math. Inst. Hautes \'{E}tudes Sci.  115  (2012), 185-323.
\bibitem[J]{Jacquet}
H. Jacquet,
Archimedean Rankin-Selberg integrals,
Contemporary Mathematics, Volume 14 (2009), 117 pages.
\bibitem[K]{Kem}
A. Kemarsky, Distinguished representations of $GL_n(\CC)$, Israel Journal of Mathematics (2013), accepted.
\bibitem[KasKob]{Kassel Kobayashi}
F. Kassel, T. Kobayashi. "Poincar\'e series for non-Riemannian locally symmetric spaces.", arXiv:1209.4075.
\bibitem[Kob]{Kob}
T. Kobayashi, A generalized Cartan decomposition for the double coset space
$(U(n_1) \times U(n_2) \times U(n_3)) \backslash U(n) / (U(p) \times U(q)) $,
J. Math. Soc. Japan, Volume 27, Number 3 (2007), 669-691.
\bibitem[Kos]{Kostant}
B. Kostant, On Whittaker vectors and representation theory, Inventiones mathematicae 48.2 (1978): 101-184.
\bibitem[LM]{LapMao}
E. M. Lapid,  Z. Mao,
On a new functional equation for local integrals,
Proceedings of the Conference on Automorphic Forms and Related Geometry: Assessing the Legacy of I. I. Piatetski-Shapiro, Contemp. Math., vol. to appear, Amer. Math. Soc., Providence, RI, Available at http://www. ma. huji. ac. il/~ erezla/publications. html. 2012
\bibitem[LR]{LapRog}
E. M. Lapid, J. D. Rogawski
Periods of Eisenstein series: The Galois case,
Duke Math. J. Volume 120, Number 1 (2003), 153-226.
\bibitem[O]{O}
O. Offen, On local root numbers and distinction , J. Reine Angew. Math 652 (2011), 165-205.
\bibitem[Ok]{Ok}
Y. Ok, Distinction and gamma factors at $1/2$: supercuspidal case, Ph.D. Thesis, Columbia University, 1997.
\bibitem[Pan]{Pan}
M.N. Panichi, Charact\'erisations du spectre temp\'er\'e de $GL_n(\CC)/GL_n(\RR)$, http://www.institut.math.jussieu.fr/theses/2001/panichi/ , Ph.D. Thesis,
Paris 7 University, 2001.
\bibitem[S]{Serre}
J.P. Serre, Galois Cohomology, Springer, 2002.
\bibitem[Spr]{Springer}
T.A. Springer, Some Results on Algebraic Groups with Involutions, Mathematisch Institut der Rÿksuniversiteit, 1984.
\bibitem[Tad]{Tadic}
M. Tadic, Unitary representations of general linear group over real and complex field, preprint MPI/SFB: 85-22.
\bibitem[T]{Treves}
F. Treves, Topological vector spaces, distributions and kernels, Purdue University,1967.
\bibitem[Wal1]{WallachB1} N. Wallach,
 Real Reductive groups I , Pure and Applied Math.
\textbf{132-I}, Academic Press, Boston, MA (1988).
\bibitem[Vog]{Vogan}
Vogan, David A. "Gelfand-Kirillov dimension for Harish-Chandra modules." Inventiones mathematicae 48.1 (1978): 75-98.
\bibitem[Wal2]{WallachB2} N. Wallach,
Real Reductive groups II , Pure and Applied Math.
\textbf{132-II}, Academic Press, Boston, MA (1992).

\end{thebibliography}
\end{document}